\documentclass[12pt]{amsart}
\usepackage{amssymb}
\usepackage{amstext}
\usepackage{amsmath}
\usepackage{amscd}
\usepackage{latexsym}
\usepackage{amsfonts}

\theoremstyle{plain}
\newtheorem{thm}{Theorem}[section]
\newtheorem*{thm*}{Theorem}
\newtheorem*{cor*}{Corollary}

\newtheorem{prop}[thm]{Proposition}
\newtheorem{lem}[thm]{Lemma}
\newtheorem{cor}[thm]{Corollary}

\newtheorem*{claim*}{Claim}

\theoremstyle{definition}
\newtheorem{defn}[thm]{Definition}
\newtheorem{ex}[thm]{Example}
\newtheorem{rem}[thm]{Remark}

\newtheorem{conj}[thm]{Conjecture}

\theoremstyle{remark}

\numberwithin{equation}{thm}

\def\Ker{\mathrm{Ker}}

\def\a{\mathfrak a}

\def\m{\mathfrak m}
\def\n{\mathfrak n}

\def\p{\mathfrak p}
\def\P{\mathfrak P}
\def\q{\mathfrak q}

\def\P{\mathfrak P}

\def\H{\mathrm{H}}

\newcommand{\calF}{\mathcal{F}}

\newcommand{\fkm}{\mathfrak{m}}

\newcommand{\fkp}{\mathfrak{p}}

\def\depth{\mathrm{depth}}

\def\Ass{\mathrm{Ass}}
\def\Assh{\mathrm{Assh}}
\def\Min{\mathrm{Min}}

\def\Spec{\mathrm{Spec}}

\tolerance=9999

\setlength{\oddsidemargin}{1.1mm}
\setlength{\evensidemargin}{1.1mm}
\setlength{\topmargin}{-1.cm}
\setlength{\headheight}{1.2cm}
\setlength{\headsep}{1.0cm}
\setlength{\textwidth}{15.6cm}
\setlength{\textheight}{21.5cm}

\newcommand{\rar}{\rightarrow}
\newcommand{\lar}{\longrightarrow}

\def\cl{\overline}
\def\ds{\displaystyle}
\def\depth{\mbox{\rm depth}}

\begin{document}

\setlength{\baselineskip}{24pt}
\title{Cohen--Macaulayness versus the vanishing of the first Hilbert
coefficient of parameter ideals}
\pagestyle{plain}
\author{L. Ghezzi}
\address{Department of Mathematics, New York City College of Technology-Cuny, 300 Jay Street, Brooklyn, NY 11201, U. S. A.}
\email{lghezzi@citytech.cuny.edu}
\author{S. Goto}
\address{Department of Mathematics, School of Science and Technology, Meiji University, 1-1-1 Higashi-mita, Tama-ku, Kawasaki 214-8571, Japan}
\email{goto@math.meiji.ac.jp}
\author{J. Hong}
\address{Department of Mathematics, Southern Connecticut State University, 501 Crescent Street, New Haven, CT 06515-1533, U. S. A.}
\email{hongj2@southernct.edu}
\author{K. Ozeki}
\address{Department of Mathematics, School of Science and Technology, Meiji University, 1-1-1 Higashi-mita, Tama-ku, Kawasaki 214-8571, Japan}
\email{kozeki@math.meiji.ac.jp}
\author{T.T. Phuong}
\address{Department of Information Technology and Applied Mathematics,
Ton Duc Thang University, 98 Ngo Tat To Street, Ward 19, Binh Thanh District,
Ho Chi Minh City, Vietnam}
\email{sugarphuong@gmail.com}
\author{W.V. Vasconcelos}
\address{Department of Mathematics, Rutgers University, 110 Frelinghuysen Rd, Piscataway, NJ 08854-8019, U. S. A.}
\email{vasconce@math.rutgers.edu}

\thanks{{AMS 2000 {\em Mathematics Subject Classification:}
13H10, 13H15, 13A30.}\\The first author is partially supported by a grant from the City University of New York PSC-CUNY Research Award Program-40. The second author is partially supported by Grant-in-Aid for Scientific Researches (C) in Japan (19540054).
The fourth author is supported by a grant from MIMS (Meiji Institute for Advanced Study of Mathematical Sciences).
The fifth author is supported by JSPS Ronpaku (Dissertation of PhD) Program.
The last author is partially supported by the NSF}

\begin{abstract}
The conjecture of Wolmer  Vasconcelos \cite{V} on the vanishing of the
first Hilbert coefficient $e_1(Q)$ is solved affirmatively, where
$Q$ is a parameter ideal in a Noetherian local ring.
Basic properties of the rings for which $e_1(Q)$ vanishes
 are derived. The
invariance of $e_1(Q)$ for parameter ideals $Q$ and its relationship
to Buchsbaum rings are studied.
\end{abstract}

\maketitle

\section{Introduction}
Let $A$ be a Noetherian local ring with maximal ideal $\m$ and $d
= \operatorname{dim} A > 0$. Let $\ell_A(M)$ denote, for an
$A$-module $M$, the length of $M$. Then, for each $\m$-primary ideal
$I$ in $A$, we have integers $\{e_i(I)\}_{0 \le i \le d}$ such that
the equality
$$\ell_A(A/I^{n+1})={e}_0(I)\binom{n+d}{d}-
e_1(I)\binom{n+d-1}{d-1}+\cdots+(-1)^d{e}_d(I)$$
holds true for all integers $n \gg 0$. We call $\{e_i(I)\}_{0 \le i \le d}$ the Hilbert
coefficients of $A$ with respect to $I$.
These integers carry a great deal of information about the ideal $I$.
We will argue that $e_1(Q)$, for parameter ideals $Q$, codes
structural information about the ring $A$ itself. Noteworthy properties of $A$ associated to values of $e_1(Q)$ are the Cohen--Macaulay, the generalized Cohen--Macaulay and the Buchsbaum conditions.

 We say that $A$ is unmixed,
if $\operatorname{dim} \widehat{A}/\p = d$ for every $\p \in
\operatorname{Ass} (\widehat{A})$, where $\widehat{A}$ denotes the
$\m$-adic completion of $A$. With this notation Vasconcelos, exploring the vanishing of $e_1(Q)$ for parameter ideals
$Q$, posed the
following conjecture in his lecture at the conference in Yokohama in March 2008.

\begin{conj}[\cite{V}]\label{conj}
Assume that $A$ is unmixed. Then $A$ is a Cohen--Macaulay local ring,
once  $e_1(Q) = 0$ for some parameter ideal $Q$ of $A$.
\end{conj}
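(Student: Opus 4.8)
The plan is to pass to a convenient ring, set up an induction on $d=\dim A$, and show that failure of the Cohen--Macaulay property forces $e_1(Q)<0$ strictly, so that the hypothesis $e_1(Q)=0$ yields the conclusion. First I would reduce to a favorable situation. Since $\widehat A$ is faithfully flat over $A$ with $\ell_{\widehat A}(\widehat A/Q^{n+1}\widehat A)=\ell_A(A/Q^{n+1})$ for all $n$, the coefficients $e_i(Q)$ are unchanged on completion, $A$ is Cohen--Macaulay iff $\widehat A$ is, and unmixedness is by definition a condition on $\widehat A$; the gonflement $A\to A[t]_{\m A[t]}$ preserves all of these while making the residue field infinite. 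Thus I may assume $A$ is complete with infinite residue field, with no embedded primes and equidimensional, so that (as $d\ge 1$ forces $\m\notin\Ass A$) one has $\depth A\ge 1$.

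The engine is the case $d\le 2$, treated by a superficial hyperplane section. Choose a minimal basis $x_1,\dots,x_d$ of $Q$ with $x_1$ superficial for $Q$ and a nonzerodivisor (possible since $\depth A\ge 1$ and the residue field is infinite); put $\bar A=A/x_1A$ and $\bar Q=Q\bar A$, a parameter ideal of the $(d-1)$-dimensional ring $\bar A$. A standard consequence of $x_1$ being a superficial nonzerodivisor is $e_i(Q)=e_i(\bar Q)$ for $0\le i\le d-1$. For $d=1$ there is nothing to prove, since an unmixed ring of dimension $1$ already has $\depth A\ge 1=d$. For $d=2$, $\bar A$ has dimension $1$; writing $U=\H^0_{\m}(\bar A)$ for its finite-length lowest local cohomology and $\bar A'=\bar A/U$, the module $\bar A'$ has positive depth, hence is Cohen--Macaulay of dimension $1$, so its first Hilbert coefficient with respect to $\bar Q\bar A'$ vanishes. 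Comparing Hilbert functions through the finite-length extension $0\to U\to\bar A\to\bar A'\to 0$ gives $e_1(\bar Q)=-\ell_A(U)$. On the other hand the cohomology sequence of $0\to A\xrightarrow{x_1}A\to\bar A\to 0$ identifies $U$ with $(0:_{\H^1_{\m}(A)}x_1)$, which contains the socle of $\H^1_{\m}(A)$. Hence $e_1(Q)=e_1(\bar Q)=-\ell_A(U)<0$ whenever $\H^1_{\m}(A)\ne 0$, i.e. whenever $A$ is not Cohen--Macaulay; so $e_1(Q)=0$ forces $\H^1_{\m}(A)=0$ and $A$ is Cohen--Macaulay.

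For $d\ge 3$ I would run the same hyperplane section and induct, the aim being to write $-e_1(Q)$ as a sum of nonnegative contributions coming from the intermediate local cohomology $\H^i_{\m}(A)$, $0<i<d$, so that $e_1(Q)=0$ forces them all to vanish. The principal difficulty is twofold. First, $\bar A=A/x_1A$ need not be unmixed even though $A$ is, so the induction hypothesis does not apply verbatim; I would address this by carrying through the induction the finite-length correction $\H^0_{\m}(\bar A)$ and the passage to $\bar A/\H^0_{\m}(\bar A)$, choosing $x_1$ generically so as to control $\Ass(\bar A)$. Second, and this is the genuine obstacle, when $d\ge 3$ the finite-length defect $\H^0_{\m}(\bar A)$ affects only the bottom coefficient $e_{d-1}(\bar Q)$ and is invisible to $e_1(\bar Q)=e_1(Q)$; a single section therefore cannot detect the failure of depth, and one must iterate the sections and show that the contributions of the intermediate local cohomology accumulate in $-e_1(Q)$ without cancellation.

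The device that prevents cancellation is the \emph{negativity} of the first Hilbert coefficient: for an unmixed local ring every parameter ideal should satisfy $e_1(Q)\le 0$, with the prototype computation above showing that each nonvanishing $\H^i_{\m}(A)$ contributes negatively. I expect this negativity, combined with the reduction to the complete case where the canonical module is available and, after a further reduction, the generalized Cohen--Macaulay and Buchsbaum formulas expressing $-e_1(Q)$ through $\sum_{i}\binom{d-2}{i-1}\ell_A(\H^i_{\m}(A))$ can be used, to be the technical heart of the argument. The hardest point is handling the modules $\H^i_{\m}(A)$ that are not of finite length, where such length formulas must be replaced by a monotonicity argument over parameter ideals $Q\subseteq\m^{n}$ with $n\gg 0$. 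Granting negativity and this sign control, $e_1(Q)=0$ forces $\H^i_{\m}(A)=0$ for all $i<d$, which is exactly the Cohen--Macaulayness of $A$.
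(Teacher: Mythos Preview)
Your reductions and the case $d\le 2$ are correct and essentially coincide with the paper's argument. The gap is in the inductive step for $d\ge 3$, where you have the right ingredients in hand but misread how they fit together.

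You call the invisibility of $\H^0_{\m}(\bar A)$ in $e_1(\bar Q)$ ``the genuine obstacle''. It is in fact exactly what makes a one-step induction work. The precise input you are gesturing at with ``choosing $x_1$ generically'' is a result of Goto--Nakamura: for a complete ring with $\Ass(A)\setminus\{\m\}=\Assh(A)$ and $d\ge 2$, the module $\H^1_{\m}(A)$ is finitely generated, the set $\{\p\ne\m:\dim A_{\p}>\depth A_{\p}=1\}$ is finite, and one may choose $a\in Q$ superficial with $\Ass(A/aA)\setminus\{\m\}=\Assh(A/aA)$. With this choice $U_{\bar A}(0)=\H^0_{\m}(\bar A)$, so $\bar A':=\bar A/\H^0_{\m}(\bar A)$ is \emph{unmixed} of dimension $d-1$, and (precisely because the finite-length piece is invisible to $e_1$) one has $e_1(Q\bar A')=e_1(\bar Q)=e_1(Q)=0$. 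The induction hypothesis therefore applies directly and yields that $\bar A'$ is Cohen--Macaulay. No iteration of sections, no accumulation of signed contributions, and no Buchsbaum-type length formula is needed; your proposed route through monotonicity for non-finite-length $\H^i_{\m}(A)$ is, as you yourself concede, not a complete argument.

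What your proposal is missing entirely is the step that climbs back from Cohen--Macaulayness of $\bar A'$ to that of $A$. This is done with two long exact sequences of local cohomology. From $0\to\H^0_{\m}(\bar A)\to\bar A\to\bar A'\to 0$, with the left term of finite length and the right term Cohen--Macaulay of dimension $d-1$, one reads off $\H^i_{\m}(\bar A)=0$ for $1\le i\le d-2$. Feeding this into the sequence attached to $0\to A\xrightarrow{a}A\to\bar A\to 0$ shows that multiplication by $a$ is surjective on $\H^1_{\m}(A)$ (hence $\H^1_{\m}(A)=0$ by Nakayama, using the finite generation above) and injective on $\H^i_{\m}(A)$ for $2\le i\le d-1$; since these modules are $\m$-torsion, injectivity of multiplication by $a$ forces them to vanish. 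Thus $A$ is Cohen--Macaulay. This single-section-plus-cohomology argument replaces your proposed iterated descent; the ``failure of depth'' you worry about is detected not by $e_1$ of the section but by the exact sequences once the induction has done its work.
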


In Section 2 of the present paper we shall settle Conjecture
\ref{conj} affirmatively (Theorem 2.1). Here we should note that Conjecture
\ref{conj} is already solved partially by \cite{GhHV} and \cite{MSV}.
In fact, Ghezzi, Hong, and Vasconcelos \cite[Theorem 3.3]{GhHV}
proved that the conjecture holds true, if $A$ is an integral domain
which is a homomorphic image of a Cohen--Macaulay ring. Mandal, Singh and
Verma \cite{MSV} proved that $e_1(Q) \le
0$ for every parameter ideal $Q$ in an arbitrary Noetherian local
ring $A$ and showed that $e_1(Q) < 0$, if $\operatorname{depth} A
= d-1$.

\medskip

 \noindent {\bf Theorem 2.1} \;
  Let $A$ be a Noetherian local ring with $d = \dim A > 0$ and let $Q$ be a parameter ideal in $A$. Then following are equivalent{\rm :}
\begin{enumerate}
\item[{\rm (a)}] $A$ is Cohen--Macaulay{\rm ;}
\item[{\rm (b)}] $A$ is {unmixed} and $e_1(Q)=0${\rm ;}
\item[{\rm (c)}] $A$ is {unmixed} and $e_1(Q) \geq 0$.
\end{enumerate}

\medskip

Let $\operatorname{Assh} (A) =\{ \p \in \operatorname{Ass} (A) \mid
\operatorname{dim} A/\p = d\}$ and let $(0) = \bigcap_{\p \in
\operatorname{Ass} (A)}I(\p)$ be a primary decomposition of
$(0)$ in $A$ with $\p$-primary ideals $I(\p)$ in $A$. We put
$$U_A(0) = \bigcap_{\p \in \operatorname{Assh} (A)}
I(\p)$$ and call it the {\it unmixed component} of $(0)$ in $A$.

Let us call those local rings $A$ with $e_1(Q) = 0$ for some
parameter ideal $Q$ of $A$ {\it Vasconcelos}\footnote{The terminology
is due to the first five authors} rings. In Section 3 we shall explore
basic properties of Vasconcelos rings. Certain sequentially
Cohen--Macaulay rings are good examples of Vasconcelos rings. A basic
characterization of some of these rings is:

\medskip

 \noindent {\bf Theorem 2.7} \; Let $A$ be a Noetherian
local ring of dimension $d \geq 2$. Let $U=U_A(0)$ and $Q$ a parameter
ideal of $A$.  Suppose that $A$ is a homomorphic image of a
Cohen--Macaulay ring. Then the following are equivalent:
\begin{enumerate}
\item[{\rm (a)}] $e_1(Q)=0$;
\item[{\rm (b)}] $A/U$ is Cohen--Macaulay and $\dim U \leq d-2$.
\end{enumerate}

Notice that unless $A$ is a homomorphic image of a Cohen--Macaulay
ring, the implication (a) $\Rightarrow$ (b) is not true in general
(Remark \ref{lech}).

In Section 4 we will study the problem of when $e_1(Q)$ is
 independent of the choice of the parameter ideal $Q$ in
$A$. We shall show that $A$ is a quasi-Buchsbaum ring, if $A$ is
unmixed and $e_1(Q)$ is constant (Corollary \ref{q-Bbm}).  The
authors conjecture that $A$ is furthermore a Buchsbaum ring, if $A$ is
unmixed and $e_1(Q)$ is independent of the choice of parameter
ideals $Q$ of $A$. We will show that this is the case, at least when
$e_1(Q)=-1$ or $-2$ (Theorem~\ref{constant}, Theorem~\ref{constant2}).
Goto and Ozeki \cite{GO} recently solved the conjecture affirmatively.

Another important issue is that of  the variability of $e_1(Q)$,
sometimes for $Q$ in a same integral closure class, and its role in
the structure of the ring. This will be pursued in a sequel paper.

In what follows, unless otherwise specified, let $(A,\m)$ denote a
Noetherian local ring with maximal ideal $\m$ and $d =
\operatorname{dim} A$. Let $\{\H_{\m}^i(*)\}_{i \in \mathbb Z}$ be the
local cohomology functors of $A$ with respect to the maximal ideal
$\m$.

\section{The vanishing conjecture}
The purpose of this section is to prove the following, which settles Conjecture \ref{conj} affirmatively.
Throughout let $A=(A, \m)$ be a Noetherian local ring with maximal ideal $\m$ and $d=\dim A$.

\begin{thm}\label{2.1}  Let $A$ be a Noetherian local ring with $d = \dim A > 0$ and let $Q$ be a parameter ideal in $A$. Then following are equivalent{\rm :}
\begin{enumerate}
\item[{\rm (a)}] $A$ is Cohen--Macaulay{\rm ;}
\item[{\rm (b)}] $A$ is {unmixed} and $e_1(Q)=0${\rm ;}
\item[{\rm (c)}] $A$ is {unmixed} and $e_1(Q) \geq 0$.
\end{enumerate}
\end{thm}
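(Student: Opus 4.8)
\emph{Overall plan.} The plan is to run the cycle (a) $\Rightarrow$ (b) $\Rightarrow$ (c) $\Rightarrow$ (a), putting essentially all of the work into the last implication and reducing it, by induction on $d$, to a single structural statement about unmixed rings. First I would dispose of (a) $\Rightarrow$ (b): if $A$ is Cohen--Macaulay then every parameter ideal $Q$ is generated by a regular sequence, so $\ell_A(A/Q^{n+1}) = \ell_A(A/Q)\binom{n+d}{d}$ for all $n \ge 0$; hence $e_0(Q) = \ell_A(A/Q)$ and $e_i(Q) = 0$ for all $i \ge 1$, while a Cohen--Macaulay ring is automatically unmixed. The implication (b) $\Rightarrow$ (c) is immediate.

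\emph{Reductions for (c) $\Rightarrow$ (a).} Here I would first invoke the theorem of Mandal--Singh--Verma that $e_1(Q) \le 0$ for every parameter ideal; combined with (c) this forces $e_1(Q) = 0$, so it suffices to prove that an unmixed ring with $e_1(Q) = 0$ is Cohen--Macaulay. Passing to $\widehat A$ changes neither $d$, nor unmixedness, nor the Hilbert function of $Q$ (hence $e_1(Q)$), nor the Cohen--Macaulay property, so I may assume $A$ is complete, hence a homomorphic image of a regular local ring and in possession of a canonical module; after the harmless faithfully flat base change $A \to A[X]_{\m A[X]}$ I may also assume the residue field is infinite. Since $A$ is unmixed and $d > 0$, no prime of dimension $0$ is associated, so $\depth A \ge 1$. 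I now induct on $d$. If $d = 1$ then $\depth A \ge 1 = d$ and $A$ is Cohen--Macaulay; if $d = 2$ then $\depth A \in \{1,2\}$, and $\depth A = 1 = d-1$ is excluded because Mandal--Singh--Verma give $e_1(Q) < 0$ in that case, so again $\depth A = d$.

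\emph{The inductive step.} For $d \ge 3$ the natural move is to choose $a \in Q$ that is part of a minimal generating set, superficial for $Q$, and a nonzerodivisor --- possible because every associated prime $\p$ satisfies $\dim A/\p = d$, so none of them contains the $\m$-primary ideal $Q$ --- and to set $B = A/aA$, giving $e_1(QB) = e_1(Q) = 0$ with $\dim B = d-1$. The difficulty is that $B$ need not be unmixed, so the inductive hypothesis does not apply to it directly. I would control this through the $S_2$-ification: writing $\widetilde A$ for the $S_2$-ification of $A$, a comparison of Hilbert polynomials along $0 \to A \to \widetilde A \to \widetilde A/A \to 0$, in which $\widetilde A/A$ has dimension at most $d-2$, yields $e_1(Q) = e_1(Q;\widetilde A) + e_0(Q;\widetilde A/A)$ with $e_1(Q;\widetilde A) \le 0$ (by Mandal--Singh--Verma, $\widetilde A$ being unmixed) and $e_0(Q;\widetilde A/A) \ge 0$. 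Thus $e_1(Q) = 0$ forces both $e_1(Q;\widetilde A) = 0$ and the vanishing of the top-dimensional part of $\widetilde A/A$. Once $A$ is known to be $S_2$, i.e. $A = \widetilde A$, a generic regular parameter $a$ makes $B = A/aA$ again unmixed of dimension $d-1$ with $e_1(QB) = 0$, and the inductive hypothesis gives that $B$, hence $A$, is Cohen--Macaulay.

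\emph{The main obstacle.} The crux is exactly the upgrade flagged above. The vanishing of $e_1$ for a single parameter ideal only detects the codimension-two (dimension $d-2$) part of the defect $\widetilde A/A$, so the comparison alone leaves $\widetilde A/A$ of dimension at most $d-3$ rather than zero; proving that this defect vanishes \emph{entirely}, i.e. that $A$ is $S_2$, is where the genuine work lies and cannot come from the one-ideal identity. I expect to supply it via local duality on the complete ring $A$: translate unmixedness and $e_1(Q) = 0$ into length and support constraints on the deficiency (local cohomology) modules $\H_{\m}^i(A)$ for $i < d$, and then argue by a secondary induction on the dimension of the non--Cohen--Macaulay locus that all of them vanish, which forces $\depth A = d$ and closes the main induction.
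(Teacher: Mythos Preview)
Your cycle, reductions, and the $d\le 2$ cases are fine; the genuine gap is in the inductive step $d\ge 3$.

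First, the displayed identity $e_1(Q)=e_1(Q;\widetilde A)+e_0(Q;\widetilde A/A)$ is wrong as stated. If $\dim(\widetilde A/A)\le d-2$, the Hilbert polynomial of $\widetilde A/A$ has degree $\le d-2$ and therefore contributes only to $e_j$ for $j\ge 2$; it cannot produce an $e_0$-term in the $e_1$-slot. Moreover, the comparison along $0\to A\to\widetilde A\to\widetilde A/A\to 0$ is not additive in Hilbert functions because $Q^{n+1}\widetilde A\cap A\ne Q^{n+1}$ in general; Artin--Rees only controls this up to a shift, which already disturbs the $e_1$-coefficient. So this route gives you neither $e_1(Q;\widetilde A)=0$ nor any information about $\widetilde A/A$ from $e_1(Q)=0$ alone.

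Second --- and you say this yourself --- the step ``prove $A$ is $S_2$'' is the entire content of the theorem in dimension $\ge 3$, and your paragraph on it is a wish, not an argument. Local duality and a secondary induction on the non--Cohen--Macaulay locus do not obviously convert a single vanishing $e_1(Q)=0$ into $\H^i_\m(A)=0$ for all $i<d$; one still has to explain how a numerical condition on \emph{one} parameter ideal annihilates modules that are not even known to be finitely generated.

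The paper closes exactly this gap, and it does so without the $S_2$-ification detour. The key input is Goto--Nakamura (Proposition~\ref{GNa}): one can choose the superficial element $a\in Q$ so that $\operatorname{Ass}(A/aA)\subseteq\operatorname{Assh}(A/aA)\cup\{\m\}$. Then the unmixed component $U$ of $S=A/aA$ coincides with $\H^0_\m(S)$, hence has finite length, so passing to $\overline S=S/U$ does not change $e_1$ and $\overline S$ is unmixed of dimension $d-1$; induction makes $\overline S$ Cohen--Macaulay. From the two long exact sequences in local cohomology (for $0\to U\to S\to\overline S\to 0$ and $0\to A\xrightarrow{a}A\to S\to 0$) one reads off $\H^i_\m(S)=0$ for $1\le i\le d-2$, then $\H^i_\m(A)=a\H^i_\m(A)$ for $1\le i\le d-1$; for $i\ge 2$ this kills $\H^i_\m(A)$ because every element is $a$-torsion, and for $i=1$ one uses that $\H^1_\m(A)$ is \emph{finitely generated} (again Proposition~\ref{GNa}) together with Nakayama. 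That finite-generation statement is precisely the structural fact your sketch is missing.
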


In our proof of Theorem \ref{2.1} the following facts are the key. See {\cite[Section 3]{GNa}} for the proof.

\begin{prop}[\cite{GNa}]\label{GNa}
Let $(A,\fkm)$ be a Noetherian local ring with $d = \dim A \ge 2$, possessing the canonical module $\mathrm{K}_A$. Assume that $\dim A/\fkp = d$ for every $\fkp \in \mathrm{Ass} (A) \setminus \{\fkm \}$. Then the following assertions hold true.
\begin{enumerate}
\item[{\rm (a)}] The local cohomology module $\mathrm{H}_{\fkm}^1(A)$ is finitely generated.
\item[{\rm (b)}] The set $\calF = \{ \fkp \in \Spec A \mid \dim A_{\fkp} > \depth (A_{\fkp}) = 1 \}$ is finite.
\item[{\rm (c)}] Suppose that the residue class field $k=A/\fkm$ of $A$ is infinite and let $I $ be an $\fkm$-primary ideal in $A$. Then one can choose an element $a \in I$ so that $a$ is superficial for $I$ and $\dim A/\fkp = d - 1$ for every $\fkp \in \mathrm{Ass}_A (A/aA) \setminus \{\fkm \}$.
\end{enumerate}
\end{prop}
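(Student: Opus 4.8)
The plan is to derive all three assertions from local duality applied to the modules of deficiency of $A$. Since each statement concerns local cohomology and the behaviour of $A$ under localization, I would first pass to the completion and write $A = S/J$ with $(S,\fkn)$ a Gorenstein (indeed regular) local ring of dimension $n$; the existence of $\K_A$ is what legitimizes this, and one checks that the hypothesis $\dim A/\fkp = d$ for $\fkp \in \Ass(A)\setminus\{\fkm\}$ is inherited. Set $K^i := \Ext^{\,n-i}_S(A,S)$, so that local duality gives $\H^i_{\fkm}(A)^{\vee}\cong K^i$ (Matlis dual), with $K^{d}=\K_A$; in particular each $K^i$ is a finitely generated $A$-module, and $\H^i_{\fkm}(A)$ is finitely generated exactly when $\dim_A K^i\le 0$. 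The two facts I would use throughout are the dimension bound $\dim_A K^i\le i$ and the localization isomorphism $(K^i)_{\fkp}\cong K^{\,i-\dim A/\fkp}_{A_{\fkp}}$ for every $\fkp\in\Spec A$, together with the remark that, $A$ being equidimensional and catenary away from $\fkm$, each $A_{\fkp}$ with $\fkp\ne\fkm$ is again equidimensional without embedded primes.

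For (a) I would compute $\Supp_A K^1$. If $\fkp\in\Supp K^1$ then $K^{\,1-\dim A/\fkp}_{A_{\fkp}}\ne 0$, which forces $\depth A_{\fkp}\le 1-\dim A/\fkp$, hence $\dim A/\fkp\le 1$. The case $\dim A/\fkp=1$ would require $\depth A_{\fkp}=0$, i.e.\ $\fkp\in\Ass(A)$ with $\fkp\ne\fkm$ and $\dim A/\fkp=1\ne d$, contradicting the hypothesis; so $\Supp K^1\subseteq\{\fkm\}$, $K^1$ has finite length, and $\H^1_{\fkm}(A)$ is finitely generated. Equivalently one may quote Faltings' finiteness theorem, since the hypothesis yields $\depth A_{\fkp}+\dim A/\fkp\ge 2$ for all $\fkp\ne\fkm$.

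For (b) the key point is that every $\fkp\in\calF$ is a minimal element of $\Supp K^{\,1+\dim A/\fkp}$. Writing $e=\dim A/\fkp$, the conditions $\depth A_{\fkp}=1$ and $\dim A_{\fkp}\ge 2$ give $K^1_{A_{\fkp}}\ne 0$, i.e.\ $\fkp\in\Supp K^{1+e}$, while $\dim A_{\fkp}\ge 2$ forces $e\le d-2$. For $\fkq\subsetneq\fkp$ one has $e':=\dim A/\fkq>e$, so $(K^{1+e})_{\fkq}\cong K^{\,1+e-e'}_{A_{\fkq}}$ sits in nonpositive degree, and it can be nonzero only if $e'=e+1$ and $\fkq\in\Ass(A)$; but then the hypothesis forces $e'=d$, i.e.\ $e=d-1$, against $e\le d-2$. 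Hence $(K^{1+e})_{\fkq}=0$ and $\fkp$ is minimal in $\Supp K^{1+e}$. As $1+e$ ranges over the finite set $\{1,\dots,d-1\}$ and each $K^i$ has only finitely many minimal primes in its support, $\calF$ is finite.

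Finally (c) is a general-position argument resting on (b). Over the infinite field $k$ a superficial element $a\in I$ for $I$ exists, and superficiality is a generic condition on $I/\fkm I$; I would additionally demand $a\notin\fkp$ for the finitely many primes $\fkp$ lying in $\Ass(A)\setminus\{\fkm\}$ or in $\calF$, which is possible since each such $\fkp\ne\fkm$ satisfies $I\not\subseteq\fkp$, and over an infinite field finitely many such avoidances are compatible with superficiality. For the resulting $a$, any $\fkp\in\Ass(A/aA)\setminus\{\fkm\}$ satisfies $\depth A_{\fkp}/aA_{\fkp}=0$; as $a$ avoids the associated primes of $A$ below $\fkp$ it is a nonzerodivisor on $A_{\fkp}$, whence $\depth A_{\fkp}=1$, and since $\fkp\notin\calF$ this forces $\dim A_{\fkp}=1$, i.e.\ $\dim A/\fkp=d-1$, as required. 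The steps I expect to be most delicate are the reduction that legitimizes local duality -- passing to a Gorenstein presentation while preserving the associated-prime hypothesis, in particular equidimensionality of the completion -- and, within (b), the borderline degree $e'=e+1$ in the localization isomorphism, where the hypothesis on $\Ass(A)$ is precisely what excludes a spurious contribution; the case $\fkm\in\Ass(A)$ is handled by first factoring out $\H^0_{\fkm}(A)$, under which $\H^1_{\fkm}$ and all localizations at $\fkp\ne\fkm$ are unchanged.
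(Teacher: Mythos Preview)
The paper does not supply its own proof of this proposition; it simply refers the reader to \cite[Section~3]{GNa}. Your argument via the modules of deficiency $K^i=\Ext^{\,n-i}_S(A,S)$ and local duality is precisely the standard route and is, in outline, the argument given in \cite{GNa}: the support computation for $K^1$ yields (a), the observation that each $\fkp\in\calF$ is a minimal prime of $\Supp K^{\,1+\dim A/\fkp}$ yields (b), and the prime-avoidance choice of $a$ based on (b) yields (c). Your handling of the borderline case $e'=e+1$ in (b), where the hypothesis on $\Ass(A)\setminus\{\fkm\}$ is what rules out a contribution from $K^0_{A_{\fkq}}$, is the heart of the matter and is correct.

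Two small remarks. First, the detour through the completion is both unnecessary and, as you suspect, delicate: the hypothesis on associated primes need not transfer to $\widehat{A}$ unless one already knows $A$ is universally catenary. The point is rather that ``possessing $\mathrm{K}_A$'' in the sense intended here gives $A$ directly as a homomorphic image of a Gorenstein local ring, so one may form the $K^i$ over $A$ itself and the issue evaporates. Second, in (c) you should only require $a\notin\fkp$ for $\fkp$ in $\bigl(\Ass(A)\cup\calF\bigr)\setminus\{\fkm\}$, since $\fkm$ can belong to $\calF$ when $\depth A=1$; this is harmless, because your verification already restricts attention to $\fkp\in\Ass_A(A/aA)\setminus\{\fkm\}$ and uses $a\in\fkp$ together with $a\notin\fkq$ for the non-maximal $\fkq\in\calF$.
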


\begin{rem}\label{U(A)}
Let $(A, \m)$ be a Noetherian local ring with $\dim A=d>0$. Recall that the unmixed component of $(0)$ in $A$ is
$U_A(0) = \bigcap_{\p \in \operatorname{Assh} (A)}
I(\p)$. Since $\H^0_{\m}(A)= \bigcap_{\p \in \Ass(A) \setminus \{ \m \}}I(\p)$, we have that $\H^0_{\m} (A) \subseteq U_A(0)$. If $\Ass(A) \setminus \{\m\} = \Assh(A)$,
then $\H^0_{\m} (A) = U_A(0)$.
\end{rem}

\begin{proof}[Proof of Theorem $\ref{2.1}$] (a) $\Rightarrow$ (b) $\Rightarrow$ (c) are clear. In order to show (c) $\Rightarrow$ (a), we may assume that $A$ is a complete unmixed local ring with $d\ge 2$ and infinite residue field. Let $Q=(a_1, \ldots, a_d)$.
We use induction on $d$.


\medskip

\noindent Let $d=2$. Then $Q=(a_1, a_2)$, where we may assume that $a=a_1$ is a superficial element.
Let $S=A/aA$ and let $q = QS$. Then $\dim S=1$ and by \cite[Lemma 2.2]{GNi} we have
\[ - \ell_A(\H^0_{\m}(S)) = e_1(q) = e_1(Q)- \ell_A(0:_A a )= e_1(Q) \geq 0.   \]
Hence $\H^0_{\m}(S)=(0)$. Therefore $S$ is Cohen--Macaulay and so is $A$.

\medskip

\noindent Suppose $d \geq 3$. Then there exists $a \in Q$ such that $\Ass(A/aA) \subseteq  \Assh(A/aA) \cup \{ \m \}$ (Proposition~\ref{GNa} (c)). Let $S=A/aA$ and $q=QS$. Note that $S$ is not necessarily unmixed. Let $U=U_S(0)$,
$\cl{S}=S/U$ and $\cl{q}=q \cl{S}$. Then $\cl{S}$ is unmixed of dimension $d-1$.
Since $e_1(\cl{q})=e_1(q)=e_1(Q) \geq 0$, by the induction hypothesis $\cl{S}$ is Cohen--Macaulay, i.e., $\H_{\m}^{i}(\cl{S}) =(0)$ for all $0 \leq i \leq d-2$.

\smallskip

\noindent From the exact sequence $0 \rar U \rar S \rar \cl{S} \rar 0$, we get a long exact sequence
\[ \begin{array}{lll}
 0 &\lar & \H^{0}_{\m}(U) \lar  \H^{0}_{\m}(S)  \lar  \H^{0}_{\m}(\cl{S}) \lar  \H^{1}_{\m}(U) \lar  \H^{1}_{\m}(S) \lar  \H^{1}_{\m}(\cl{S}) \lar \cdots \\ && \\
   &\lar & \H^{d-2}_{\m}(U)  \lar  \H^{d-2}_{\m}(S) \lar  \H^{d-2}_{\m}(\cl{S}) .
 \end{array}\]
Since $\H_{\m}^{0}(S)=U$ (Remark~\ref{U(A)}),  we have $\H^{i}_{\m}(U)=(0)$ for all $i \geq 1$. Therefore
\[\H^{i}_{\m}(S)=(0), \quad \mbox{\rm for all} \;\; 1 \leq i \leq d-2. \]

\smallskip

\noindent From the exact sequence $0 \rar A \stackrel{\cdot a}{\lar} A \rar S \rar 0$, we get
\[\begin{array}{lll}
0 &\lar & \H^{0}_{\m}(A) \lar  \H^{0}_{\m}(A)  \lar  \H^{0}_{\m}(S) \lar  \H^{1}_{\m}(A)  \stackrel{\cdot a}{\lar}
\H^{1}_{\m}(A) \lar  0 \lar \cdots \\ && \\
 &\lar & 0 \rar \H^{i}_{\m}(A) \stackrel{\cdot a}{\lar}  \H^{i}_{\m}(A) \rar 0 \rar \cdots \rar  \H^{d-2}_{\m}(S)=0 \rar \H^{d-1}_{\m}(A)  \stackrel{\cdot a}{\lar} \H^{d-1}_{\m}(A)  .
 \end{array}\]

\smallskip
\noindent Since $A$ has a nonzero divisor, 
$\H^{0}_{\m}(A)=(0)$.
The epimorphism $\H^{1}_{\m}(A)  \rar \H^{1}_{\m}(A) \rar  0$ implies $\H^{1}_{\m}(A) =a \H^{1}_{\m}(A) $. Since $\H^{1}_{\m}(A) $ is finitely generated (Proposition~\ref{GNa} (a)), $\H^{1}_{\m}(A) =(0)$.
For $2 \leq i \leq d-1$, we obtain
 $\H^{i}_{\m}(A) =(0)$ because for every $x \in \H^{i}_{\m}(A)$ some power of $a$ annihilates $x$.
\end{proof}

Let us discuss some consequences of Theorem \ref{2.1}.

\begin{lem}\label{lemma}
Let $A$ be a Noetherian local ring of dimension $d > 0$. Let $Q$ be a parameter ideal of $A$.
Suppose that $U=U_A(0) \neq (0)$.
Let $C = A/U$. Then the following assertions hold true.
\begin{enumerate}
\item[{\rm (a)}] $\dim U < \dim A$.
\item[{\rm (b)}] We have
\[
e_1(Q) = \left\{\begin{array}{ll}
e_1(QC) &\quad \mbox{\rm if} \;\; \dim U \leq d-2 \\& \\
e_1(QC)-s_0 &\quad \mbox{\rm if} \;\; \dim U=d-1,
\end{array}
\right.
\]where $s_0 \geq 1$ is the multiplicity of  ${\ds \bigoplus_n U/(Q^{n+1}\cap U)}$.

\item[{\rm (c)}] $e_1(Q) \leq e_1(QC)$ with equality if and only if $\dim U \leq d-2$.
\end{enumerate}
\end{lem}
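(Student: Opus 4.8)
The plan is to dispose of (a) by a localization argument, then obtain (b) from the additivity of lengths along the defining sequence $0 \to U \to A \to C \to 0$, and finally read off (c) directly from (b).

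For (a), the point is that $U$ vanishes after localizing at any prime of maximal dimension. If $\dim A/\p = d$, then $\p$ is minimal, so $\p \in \Assh(A)$; localizing the primary decomposition $(0) = \bigcap_{\q \in \Ass(A)} I(\q)$ at $\p$ kills every component except the $\p$-primary one, giving $I(\p)_{\p} = (0)$ in $A_{\p}$. Since the other members of $\Assh(A)$ are incomparable with $\p$, their localizations are the unit ideal, whence $U_{\p} = \bigcap_{\q \in \Assh(A)} I(\q)_{\p} = I(\p)_{\p} = (0)$. Thus no prime of dimension $d$ lies in $\Supp(U)$, and $\dim U < d$.

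For (b), write $s = \dim U$ and note $\dim C = d$ (since $\Ass(C) \subseteq \Assh(A)$). The inclusion $U \hookrightarrow A$ induces, for every $n$, the exact sequence
\[
0 \longrightarrow U/(Q^{n+1} \cap U) \longrightarrow A/Q^{n+1} \longrightarrow C/Q^{n+1}C \longrightarrow 0,
\]
hence $\ell_A(A/Q^{n+1}) = \ell_A(C/Q^{n+1}C) + \ell_A(U/(Q^{n+1} \cap U))$. By Artin--Rees the filtration $\{Q^{n+1} \cap U\}$ is $Q$-stable, so the last summand agrees for $n \gg 0$ with a polynomial of degree $s$ whose normalized leading coefficient is $s_0$; moreover $s_0 \geq 1$, because a nonzero module has positive multiplicity (when $s = 0$ this reads $s_0 = \ell_A(U) \geq 1$). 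The first two summands are the Hilbert polynomials encoding $\{e_i(Q)\}$ and $\{e_i(QC)\}$, both of degree $d$, so it remains only to match coefficients.

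The comparison now splits on $s$. If $s \leq d-2$, the $U$-term perturbs only the coefficients of $\binom{n+d-i}{d-i}$ with $i \geq 2$, leaving $e_0$ and $e_1$ unchanged, so $e_1(Q) = e_1(QC)$. If $s = d-1$, the leading term $s_0 \binom{n+d-1}{d-1}$ of the $U$-contribution lands exactly in the degree-$(d-1)$ slot; equating the coefficients of $\binom{n+d-1}{d-1}$ yields $-e_1(Q) = -e_1(QC) + s_0$, that is $e_1(Q) = e_1(QC) - s_0$. This proves (b), and (c) follows at once: since $s_0 \geq 1$, equality $e_1(Q) = e_1(QC)$ holds precisely when nothing is subtracted, i.e.\ exactly when $\dim U \leq d-2$. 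The only genuinely delicate step is (a); once $\dim U < d$ is secured, the rest is routine Hilbert-polynomial bookkeeping, and the one thing to watch is which coefficient the submodule $U$ disturbs.
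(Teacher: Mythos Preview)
Your proof is correct and follows essentially the same route as the paper: the short exact sequence $0 \to U \to A \to C \to 0$ gives the additivity formula $\ell_A(A/Q^{n+1}) = \ell_A(C/Q^{n+1}C) + \ell_A(U/(Q^{n+1}\cap U))$, and comparing Hilbert coefficients yields (b) and hence (c). The only difference is that you supply an argument for (a), which the paper states without proof; your localization argument for (a) is fine.
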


\begin{proof} (b) We write
$$\ell_A(U/(Q^{n+1}\cap U)) = s_0\binom{n + t }{t} -s_1\binom{n + t-1}{t-1} + \cdots + (-1)^ts_t$$
for $n \gg 0$ with integers $\{s_i\}_{0 \le i \le t}$, where $t=\dim U$.
Then the claim follows from
the exact sequence $0 \rar U \rar A \rar C \rar 0$  of $A$--modules, which gives
\[ \ell_A(A/Q^{n+1}) = \ell_A(C/Q^{n+1}C) + \ell_A(U/(Q^{n+1}\cap U)) \]
for all $n \ge 0$.

\medskip

\noindent (c) It follows from (b) and the fact that $s_0 \ge 1$.
\end{proof}

The following results are due to \cite{MSV}. We include an independent proof.

\begin{cor}[{\cite{MSV}}]\label{cor} Let $A$ be a Noetherian local ring of dimension $d >0$. Let $Q$ be a parameter ideal of $A$. Then the following assertions hold true.
\begin{enumerate}
\item[{\rm (a)}] $e_1(Q) \le 0$.
\item[{\rm (b)}] If $\depth(A) =d-1$, then $e_1(Q) < 0$.
\end{enumerate}
\end{cor}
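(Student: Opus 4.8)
The plan is to derive both parts from Theorem~\ref{2.1} and Lemma~\ref{lemma} by passing to the unmixed quotient $C = A/U$, where $U = U_A(0)$. First I would reduce to the case that $A$ is complete: the coefficient $e_1(Q)$, the dimension $d$, the depth of $A$, and the vanishing and lengths of the local cohomology modules $\H^i_\m(A)$ are all preserved under passage to the $\m$-adic completion $\hat{A}$. This reduction is not merely cosmetic; it is exactly what guarantees that $C = A/U$ is unmixed in the sense demanded by Theorem~\ref{2.1}. Once $A$ is complete, $C$ is complete with $\Ass(C) = \Assh(A)$, so every associated prime of $C = \hat{C}$ has dimension $d$, and $C$ is genuinely unmixed of dimension $d > 0$.

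For part (a), the key point is that $e_1(QC) \le 0$. Indeed, were $e_1(QC) > 0$, then in particular $e_1(QC) \ge 0$, and the implications (c) $\Rightarrow$ (a) $\Rightarrow$ (b) of Theorem~\ref{2.1}, applied to the unmixed ring $C$, would force $C$ to be Cohen--Macaulay with $e_1(QC) = 0$, a contradiction. Feeding this into Lemma~\ref{lemma}(c), which asserts $e_1(Q) \le e_1(QC)$, yields $e_1(Q) \le e_1(QC) \le 0$ at once. The case $U = (0)$, where $C = A$ is already unmixed and $e_1(Q) = e_1(QC)$ trivially, is immediate.

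For part (b), assume $\depth A = d - 1$ and split on the value of $\dim U$, which by Lemma~\ref{lemma}(a) satisfies $\dim U \le d-1$. If $\dim U = d - 1$, then Lemma~\ref{lemma}(b) gives $e_1(Q) = e_1(QC) - s_0$ with $s_0 \ge 1$, so $e_1(Q) \le e_1(QC) - 1 \le -1 < 0$ using part (a). If instead $\dim U \le d - 2$ (in particular if $U = (0)$), then Lemma~\ref{lemma}(b) gives $e_1(Q) = e_1(QC)$, and by Theorem~\ref{2.1} it suffices to show that $C$ is \emph{not} Cohen--Macaulay, for then $e_1(QC) < 0$. I would argue by contradiction: if $C$ were Cohen--Macaulay, then $\H^j_\m(C) = 0$ for $j < d$, while $\H^j_\m(U) = 0$ for $j > \dim U$; the long exact sequence in local cohomology attached to $0 \to U \to A \to C \to 0$ then yields isomorphisms $\H^j_\m(A) \cong \H^j_\m(U)$ for all $j \le d - 1$. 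Consequently $\depth A = \depth U \le \dim U \le d - 2$ (or $\depth A = d$ when $U = (0)$), contradicting $\depth A = d - 1$.

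The step requiring the most care is the completion reduction, since the quotient $A/U_A(0)$ need not be formally unmixed before completing, and Theorem~\ref{2.1} genuinely needs the formal unmixedness of $C$; after that the argument is clean bookkeeping. The only other delicate point is the local-cohomology computation in part (b), where one must track precisely which modules vanish in order to pin down $\depth A$ exactly rather than merely bound it.
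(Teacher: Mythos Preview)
Your proof of part (a) is correct and essentially identical to the paper's: complete, pass to $C = A/U$, combine Lemma~\ref{lemma}(c) with Theorem~\ref{2.1}.

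Your proof of part (b) is also correct but takes a genuinely different route. The paper argues by reducing modulo a superficial sequence $a_1,\dots,a_{d-1}\in Q$ (after enlarging the residue field): since $\depth A = d-1$ this sequence is regular, so $e_1(Q)=e_1(\bar Q)$ for the one-dimensional quotient $\bar A$, and then the explicit formula $e_1(\bar Q) = -\ell_A(\H_\m^0(\bar A))$ from \cite[Lemma~2.4(1)]{GNi} gives strict negativity because $\depth \bar A = 0$. Your argument instead stays with the unmixed quotient $C=A/U$ and splits on $\dim U$: when $\dim U = d-1$ the term $-s_0$ in Lemma~\ref{lemma}(b) forces $e_1(Q)<0$ directly, and when $\dim U \le d-2$ you rule out Cohen--Macaulayness of $C$ via the long exact sequence in local cohomology, so Theorem~\ref{2.1} yields $e_1(Q)=e_1(QC)<0$. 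The paper's approach is shorter and more computational, relying on the dimension-one formula and the machinery of superficial elements; yours is more structural, using only Theorem~\ref{2.1}, Lemma~\ref{lemma}, and elementary local-cohomology vanishing, at the cost of the completion step and a case analysis. Both are clean; your version has the minor advantage of not invoking the external reference \cite{GNi} or the passage to infinite residue field.
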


\begin{proof}
 (a) We may assume that $A$ is complete. Let $U=U_A(0)$ and $C=A/U$. Then by Lemma~\ref{lemma},
  $e_1(Q) \le e_1(QC)$. Hence we may also assume that $A$ is unmixed. Hence  the claim follows from Theorem \ref{2.1}.

\medskip

\noindent (b) We may assume that the residue field $A/\m$ is infinite. If $d=1$, by \cite[Lemma 2.4 (1)]{GNi}, we have
$e_1(Q)= - \ell_A(H^0_{\m}(A)) < 0$,
where the last inequality follows from the fact that $\depth(A)=0$.
Suppose that $d \geq 2$. Let $a_1,\dots, a_{d-1}\in Q$ be a superficial sequence and let $\cl{A}=A/(a_1,\dots, a_{d-1})$, $\cl{Q}=Q/(a_1,\dots, a_{d-1})$. Since $\depth(A)=d-1$, we have that
$e_1(Q)= e_1(\cl{Q})=- \ell_A(H^0_{\m}(\cl{A})) < 0$.
\end{proof}

\begin{prop}\label{1.2-1}
Let $A$ be a Noetherian local ring of dimension $d \geq 2$. Let $U=U_A(0)$.
Suppose that $A/U$ is Cohen--Macaulay and $\dim U \leq d-2$. Then
$e_1(Q)=0$ for every parameter ideal $Q$ of $A$.
\end{prop}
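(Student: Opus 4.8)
The plan is to reduce the computation to the Cohen--Macaulay ring $C = A/U$ and then invoke the standard vanishing of the first Hilbert coefficient of a parameter ideal in a Cohen--Macaulay ring. First I would observe that, since $\dim U \le d-2 < d$, the short exact sequence $0 \to U \to A \to C \to 0$ forces $\dim C = d$; thus the hypothesis says that $C$ is a Cohen--Macaulay local ring of dimension $d$. Writing $Q = (a_1, \dots, a_d)$ for a parameter ideal of $A$, the residue $C/QC$ is a homomorphic image of the Artinian ring $A/Q$, hence of finite length, so $QC$ is $\m_C$-primary; being generated by $d$ elements in a ring of dimension $d$, it is a parameter ideal of $C$.

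Next I would record the key computation on $C$. In a Cohen--Macaulay local ring every system of parameters is a regular sequence, so the associated graded ring of $QC$ is a polynomial ring over $C/QC$ and one has the exact equality $\ell_C(C/(QC)^{n+1}) = e_0(QC)\binom{n+d}{d}$ for all $n \ge 0$. Reading off the coefficients of this Hilbert polynomial gives $e_1(QC) = 0$ (in fact $e_i(QC) = 0$ for every $i \ge 1$).

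Finally I would transfer the vanishing back to $A$. If $U \ne (0)$, then Lemma~\ref{lemma}(b) applies directly: as $\dim U \le d-2$, it yields $e_1(Q) = e_1(QC) = 0$. If instead $U = (0)$, then $A = C$ is itself Cohen--Macaulay and $e_1(Q) = 0$ is already immediate from the previous paragraph. In either case $e_1(Q) = 0$ for every parameter ideal $Q$, as required.

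I would expect no serious obstacle here, since the statement is essentially a corollary of Lemma~\ref{lemma} combined with the vanishing of $e_1$ for parameter ideals in Cohen--Macaulay rings. The only points demanding a moment's care are verifying that $\dim C = d$ (so that $QC$ is genuinely a parameter ideal of a $d$-dimensional Cohen--Macaulay ring rather than of a ring of smaller dimension) and treating the degenerate case $U = (0)$ separately, because Lemma~\ref{lemma} is formulated under the assumption $U \ne (0)$.
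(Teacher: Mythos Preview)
Your argument is correct and follows essentially the same route as the paper: invoke Lemma~\ref{lemma} to obtain $e_1(Q)=e_1(QC)$ from $\dim U\le d-2$, and then use Cohen--Macaulayness of $C$ to conclude $e_1(QC)=0$. The only differences are cosmetic---you give a direct argument for $e_1(QC)=0$ via the polynomial associated graded ring rather than citing Theorem~\ref{2.1}, and you explicitly treat the degenerate case $U=(0)$ that the paper glosses over.
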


\begin{proof} Let $C=A/U$. Since $\dim U \leq d-2$, we get  $e_1(Q)=e_1(QC)$ (Lemma~\ref{lemma}). Since $C$ is Cohen--Macaulay, we have $e_1(QC)=0$ (Theorem~\ref{2.1}), which completes the claim.
\end{proof}

\begin{thm}\label{1.2-2}
Let $A$ be a Noetherian local ring of dimension $d \geq 2$. Suppose that $A$ is a homomorphic image of a Cohen--Macaulay ring. Let $U=U_A(0)$ and let $Q$ be a parameter ideal of $A$.
Then the following are equivalent{\rm :}
\begin{enumerate}
\item[{\rm (a)}] $e_1(Q)=0${\rm ;}
\item[{\rm (b)}] $A/U$ is Cohen--Macaulay and $\dim U \leq d-2$.
\end{enumerate}
\end{thm}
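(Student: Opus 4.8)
The plan is to prove Theorem \ref{1.2-2} by reducing to the completion and then exploiting the earlier structural results, with the forward implication (b) $\Rightarrow$ (a) being essentially free and the reverse implication (a) $\Rightarrow$ (b) carrying all the weight. First I would dispose of (b) $\Rightarrow$ (a): this is exactly Proposition \ref{1.2-1}, which requires no hypothesis on $A$ being a homomorphic image of a Cohen--Macaulay ring, so that direction needs only a citation. The real content is (a) $\Rightarrow$ (b), and here the hypothesis that $A$ is a homomorphic image of a Cohen--Macaulay ring is indispensable (as the authors flag via Remark \ref{lech}), because it guarantees the existence of a canonical module and the good behaviour of the unmixed component under completion.

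For (a) $\Rightarrow$ (b), the first step is a standard reduction to the complete case. Passing to $\widehat{A}$ preserves $d$, preserves the Cohen--Macaulay property of $A/U$ and the dimension of $U$ (here one must check that $U_A(0)$ completes to $U_{\widehat{A}}(0)$, which is where the homomorphic-image hypothesis enters, since it ensures $A$ is universally catenary and the associated primes of maximal dimension behave well under completion), and leaves $e_1(Q)$ unchanged for a suitable parameter ideal. I would also enlarge the residue field to be infinite by the usual faithfully flat extension $A \to A[X]_{\m A[X]}$, which changes none of the relevant invariants. Thus I may assume $A$ is complete with infinite residue field and admits a canonical module, so that Proposition \ref{GNa} is available.

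The core argument then splits according to $\dim U$. Set $C = A/U$, which is unmixed of dimension $d$ by construction of the unmixed component. By Lemma \ref{lemma}(c) we have $e_1(Q) \le e_1(QC)$, and by Corollary \ref{cor}(a) we have $e_1(QC) \le 0$; combined with the hypothesis $e_1(Q) = 0$ this forces $e_1(QC) = 0$ as well. Now $C$ is unmixed with $e_1(QC) = 0$, so Theorem \ref{2.1} gives that $C = A/U$ is Cohen--Macaulay, which is the first half of (b). It remains to establish $\dim U \le d-2$. From $e_1(Q) = e_1(QC)$ together with Lemma \ref{lemma}(b), the case $\dim U = d-1$ would force the correction term $s_0 \ge 1$ and hence $e_1(Q) = e_1(QC) - s_0 < e_1(QC) = 0$, contradicting $e_1(Q) = 0$. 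Therefore the equality $e_1(Q) = e_1(QC)$ is only possible when $\dim U \le d-2$, exactly the equality criterion recorded in Lemma \ref{lemma}(c), giving the second half of (b).

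The step I expect to be the main obstacle is the reduction to the complete case, specifically verifying that the unmixed component and the Cohen--Macaulayness of $A/U$ transfer correctly between $A$ and $\widehat{A}$. This is precisely the point at which the hypothesis that $A$ is a homomorphic image of a Cohen--Macaulay ring cannot be dropped: without it, the formation of $U_A(0)$ need not commute with completion, and indeed Remark \ref{lech} is cited to show (a) $\Rightarrow$ (b) fails in general. Once this compatibility is in hand, the remainder is a clean bookkeeping argument driven entirely by Lemma \ref{lemma}, Corollary \ref{cor}, and Theorem \ref{2.1}, with no further delicate estimates required.
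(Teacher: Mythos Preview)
Your core argument---using Lemma \ref{lemma}, Corollary \ref{cor}, and Theorem \ref{2.1} to force $e_1(QC)=0$, conclude that $C=A/U$ is Cohen--Macaulay, and then rule out $\dim U = d-1$---is exactly what the paper does, with the same tools in essentially the same order.

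However, your reduction to the complete case is unnecessary and misidentifies where the hypothesis is used. The paper does \emph{not} pass to $\widehat{A}$ at all. The hypothesis that $A$ is a homomorphic image of a Cohen--Macaulay ring is used in a single, direct way: since $C = A/U$ is also a homomorphic image of a Cohen--Macaulay ring and satisfies $\dim C/P = d$ for every $P \in \Ass(C)$, it follows immediately that $C$ is unmixed (i.e., $\Ass \widehat{C} = \Assh \widehat{C}$). This is a standard consequence of the good formal fibers of quotients of Cohen--Macaulay rings. Once $C$ is unmixed, Theorem \ref{2.1} applies to $C$ directly, with no further reduction. So the step you flag as the ``main obstacle''---checking that $U_A(0)$ completes to $U_{\widehat{A}}(0)$---is a phantom: you never need to form $U_{\widehat{A}}(0)$ or complete $A$. (You also do not need the residue field to be infinite, since none of Lemma \ref{lemma}, Corollary \ref{cor}, or the statement of Theorem \ref{2.1} requires it.)

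One small omission: Lemma \ref{lemma} is stated under the assumption $U \neq (0)$, so you should handle the case $U = (0)$ separately; there $A$ itself is unmixed and Theorem \ref{2.1} finishes at once, as the paper notes.
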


\begin{proof}
 It is enough to prove that (a) $\Rightarrow$ (b).  Let $C = A/U$. Then $C$ is an unmixed local ring because $C$ is a homomorphic image of a Cohen--Macaulay ring  and  $\operatorname{dim} C/P = d$ for all $P \in \operatorname{Ass} (C)$. If $U =(0)$, then $A$ is unmixed. Therefore $A$ is Cohen--Macaulay by Theorem~\ref{2.1}.
 Suppose that $U \ne (0)$. We show that $\dim U \leq d-2$. Suppose not, i.e., $\dim U = d-1$. By Lemma~\ref{lemma}, we get
  \[ e_1(Q) = e_1(QC) - s_0, \]where $s_0 \geq 1$. On the other hand, $e_1(QC) \leq 0$ by Corollary~\ref{cor} (a). Hence $e_1(Q) <0$, which is a contradiction. Therefore $\dim U \leq d-2$. Now Lemma~\ref{lemma} implies that  $e_1(QC) = e_1(Q) = 0$. By Theorem~\ref{2.1}, $C$ is a Cohen--Macaulay ring.
\end{proof}

The following corollary gives a characterization of Cohen-Macaulayness.

\begin{cor}
Let $A$ be a Noetherian local ring of dimension $d > 0$. Let $Q$ be a parameter ideal in $A$. Suppose that $e_i(Q) = 0$ for all $1 \le i \le  d$. Then $A$ is a Cohen--Macaulay ring.
\end{cor}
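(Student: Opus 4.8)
The plan is to reduce to the setting of Theorem~\ref{1.2-2} and then exploit the vanishing of the higher coefficients $e_2(Q),\dots,e_d(Q)$ to eliminate the unmixed component. First I would pass to the completion: since $\dim\widehat A = d$, since $\widehat A$ is Cohen--Macaulay if and only if $A$ is, and since $e_i(Q\widehat A)=e_i(Q)=0$ for $1\le i\le d$ (lengths of finite-length modules are unchanged under completion), I may assume $A$ is complete. The advantage is that a complete local ring is a homomorphic image of a regular, hence Cohen--Macaulay, ring by Cohen's structure theorem, so Theorem~\ref{1.2-2} becomes applicable. The dimension-one case has to be dispatched by hand, since Theorem~\ref{1.2-2} requires $d\ge 2$: for $d=1$ the hypothesis is simply $e_1(Q)=0$, and the formula $e_1(Q)=-\ell_A(\H^0_{\m}(A))$ from \cite[Lemma 2.4 (1)]{GNi} then forces $\H^0_{\m}(A)=(0)$, i.e.\ $\depth A\ge 1=d$, so $A$ is Cohen--Macaulay.

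For $d\ge 2$, I would apply Theorem~\ref{1.2-2} using only $e_1(Q)=0$ to conclude that $C:=A/U$ is Cohen--Macaulay and $\dim U\le d-2$, where $U=U_A(0)$; the remaining task is to show $U=(0)$. Here is where the higher coefficients enter. Because $C$ is Cohen--Macaulay of dimension $d$, its Hilbert--Samuel function is $\ell_A(C/Q^{n+1}C)=e_0(QC)\binom{n+d}{d}$ for $n\gg 0$. The short exact sequence $0\to U\to A\to C\to 0$ gives the additivity
\[
\ell_A(A/Q^{n+1})=\ell_A(C/Q^{n+1}C)+\ell_A\bigl(U/(Q^{n+1}\cap U)\bigr)
\]
exactly as in the proof of Lemma~\ref{lemma}. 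On the left side the full hypothesis $e_i(Q)=0$ for $1\le i\le d$ gives $\ell_A(A/Q^{n+1})=e_0(Q)\binom{n+d}{d}$ for $n\gg 0$. Since the term $\ell_A(U/(Q^{n+1}\cap U))$ is a polynomial of degree $\dim U\le d-2<d$, matching the degree-$d$ coefficients yields $e_0(Q)=e_0(QC)$, whereupon the displayed identity forces $\ell_A(U/(Q^{n+1}\cap U))=0$ for $n\gg 0$. Thus $U\subseteq Q^{n+1}$ for all large $n$, so $U\subseteq\bigcap_n Q^{n+1}=(0)$ by Krull's intersection theorem, and $A=C$ is Cohen--Macaulay.

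The heavy lifting is all inside Theorem~\ref{1.2-2}; relative to that, the present argument is bookkeeping, and I do not anticipate a serious obstacle. The one conceptual point worth flagging is why the higher $e_i$ are needed at all: Theorem~\ref{1.2-2} extracts from $e_1(Q)=0$ only that $A/U$ is Cohen--Macaulay with $\dim U\le d-2$, and this is genuinely weaker than Cohen--Macaulayness of $A$ (by Proposition~\ref{1.2-1}, any such ring already has $e_1(Q)=0$). The vanishing of $e_2(Q),\dots,e_d(Q)$ is precisely what collapses the length contribution of $U$ and upgrades the conclusion to $U=(0)$. The two places demanding a little care are the reduction to the complete case---needed so that $A$ is a homomorphic image of a Cohen--Macaulay ring, a hypothesis Theorem~\ref{1.2-2} cannot dispense with---and the separate handling of $d=1$.
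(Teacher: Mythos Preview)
Your proposal is correct and follows essentially the same route as the paper's own proof: pass to the completion, invoke Theorem~\ref{1.2-2} from $e_1(Q)=0$ to get $C=A/U$ Cohen--Macaulay with $\dim U\le d-2$, and then use the additivity $\ell_A(A/Q^{n+1})=\ell_A(C/Q^{n+1}C)+\ell_A(U/(Q^{n+1}\cap U))$ together with $e_0(Q)=e_0(QC)$ and the vanishing of all $e_i(Q)$ to force $U\subseteq Q^{n+1}$ for $n\gg 0$, hence $U=(0)$. Your version is slightly more careful than the paper in separating out the case $d=1$ (where Theorem~\ref{1.2-2} does not apply) and in naming Krull's intersection theorem explicitly, but these are expository refinements rather than substantive differences.
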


\begin{proof}
We may assume that $A$ is complete. Let $U = U_A(0)$. By Theorem~\ref{2.1} it is enough to show that $U=(0)$.
Suppose that $U \neq (0)$ and let $C=A/U$. Since $e_1(Q)=0$ we have that $C$ is Cohen--Macaulay and $\dim U \leq d-2$ (Theorem \ref{1.2-2}).
From the exact sequence $0 \rar U \rar A \rar C \rar 0$  of $A$-modules,
we get
\[ \ell_A(A/Q^{n+1}) = \ell_A(C/Q^{n+1}C) + \ell_A(U/(Q^{n+1}\cap U)) \]
for all $n \ge 0$. By assumption, for $n \gg 0$ we have
\[ \ell_A(A/Q^{n+1})  = e_0(Q) \binom{n+d}{d}.\] Also since $C$ is Cohen--Macaulay, for $n \gg 0$ we have
\[  \ell_A(C/Q^{n+1}C) = e_0(QC) \binom{n+d}{d}.\]  Since $e_0(Q)=e_0(QC)$, for $n \gg 0$ we obtain
\[\ell_A(U/(Q^{n+1}\cap U))=0,\] i.e., $U \subseteq Q^{n+1}$ for $n \gg 0$.
Thus $U = (0)$, which is a contradiction. 
 \end{proof}

The implication (a) $\Rightarrow$ (b) in Theorem \ref{1.2-2} is not true in general without the assumption  that $A$ is a homomorphic image of a Cohen--Macaulay ring.

\begin{rem}\label{lech}
Let $R$ be a Noetherian equi-characteristic complete  local ring and assume that $\operatorname{depth} (R) >0$.
Then one can construct a Noetherian local integral domain $(A,\m)$ such that $R = \widehat{A}$, where $\widehat{A}$ denotes the $\m$-adic completion of $A$ (\cite{L}). For example,  let $k[[X,Y,Z,W]]$ be  the formal power series ring over a field $k$ and consider the local ring $$R = k[[X, Y, Z, W]]/(X) \cap (Y,Z,W).$$ We can choose a Noetherian local integral domain $(A,\m)$ so that $R = \widehat{A}$. Then $e_1(Q)= 0$ for every parameter ideal $Q$ in $A$, since $e_1(Q) = e_1(QR) = 0$. But $A$ is not Cohen--Macaulay because $R= \widehat{A}$ is not Cohen--Macaulay.
\end{rem}

\section{Vasconcelos rings}

Throughout this section let $A=(A, \m)$ be a Noetherian local ring with maximal ideal $\m$ and $d = \dim A$.
Our purpose is to  develop a theory of Vasconcelos rings.  Let us begin with the definition.

\begin{defn}
A Noetherian local ring $A$ is a {\em Vasconcelos} ring, if either $\dim A = 0$, or $\dim A > 0$ and $e_1(Q) = 0$ for some parameter ideal $Q$ in $A$.
\end{defn}

A Cohen--Macaulay local ring is a Vasconcelos ring.

\begin{prop}\label{Vring} Let $(A, \m)$ be a Noetherian local ring with $\dim A =d$.
\begin{enumerate}
\item[{\rm (a)}] A $1$--dimensional Vasconcelos ring is Cohen--Macaulay.
\item[{\rm (b)}] An unmixed Vasconcelos ring is Cohen--Macaulay.
\item[{\rm (c)}] Suppose that $d \geq 2$. Then $A$ is a Vasconcelos ring if and only if $A/\H^0_{\m}(A)$ is a Vasconcelos ring.
\end{enumerate}
\end{prop}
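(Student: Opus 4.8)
The plan is to prove Proposition \ref{Vring} part by part, reducing each statement to the already-established Theorem \ref{2.1} and Lemma \ref{lemma}.

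\medskip

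For part (a), let $A$ be a $1$-dimensional Vasconcelos ring with parameter ideal $Q$ satisfying $e_1(Q)=0$. First I would pass to $A/\H^0_\m(A)$, which is $1$-dimensional with positive depth, hence unmixed. By Corollary \ref{cor}(b), if $\depth A = d-1 = 0$ then $e_1(Q)<0$; since we assume $e_1(Q)=0$, this forces $\depth A \geq 1$, so $\H^0_\m(A)=(0)$ and $A$ is itself unmixed of dimension $1$. Then part (b) applies, or more directly $A$ is Cohen--Macaulay by Theorem \ref{2.1}. For part (b), an unmixed Vasconcelos ring has $e_1(Q)=0$ for some parameter ideal $Q$, and unmixedness together with $e_1(Q)=0 \geq 0$ is precisely condition (c) of Theorem \ref{2.1}, which gives Cohen--Macaulayness immediately. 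These two parts are essentially restatements of earlier results.

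\medskip

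The substance is in part (c). Write $H = \H^0_\m(A)$ and $B = A/H$, and observe that $\dim B = d \geq 2$ while $\H^0_\m(B) = (0)$, so $B$ has positive depth. The key computational input is Lemma \ref{lemma}: taking $U = U_A(0)$, we have $\dim H \leq 0 < d-2$ when $d \geq 2$ is large, but more carefully, since $H = \H^0_\m(A) \subseteq U_A(0)$ and $\dim H = 0$, the case $\dim U \leq d-2$ of Lemma \ref{lemma} need not directly apply to $H$ unless $U = H$. Instead I would argue directly from the length formula. From the exact sequence $0 \to H \to A \to B \to 0$ one gets $\ell_A(A/Q^{n+1}) = \ell_A(B/Q^{n+1}B) + \ell_A(H/(Q^{n+1}\cap H))$ for all $n$. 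Since $\dim H = 0$, the term $\ell_A(H/(Q^{n+1}\cap H))$ stabilizes to the constant $\ell_A(H)$ for $n \gg 0$ (indeed $Q^{n+1}\cap H = (0)$ eventually because $H$ is annihilated by a power of $\m$ and $Q$ is a parameter ideal). A constant contributes only to the degree-$0$ Hilbert coefficient $e_d$ and leaves $e_1$ unchanged when $d \geq 2$. Therefore $e_1(Q) = e_1(QB)$ for every parameter ideal $Q$, so $e_1(Q)=0$ for some $Q$ in $A$ if and only if $e_1(QB)=0$ for the corresponding parameter ideal of $B$; that is, $A$ is Vasconcelos if and only if $B$ is.

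\medskip

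The main obstacle is the interaction between the two ``correction'' modules $H = \H^0_\m(A)$ and $U = U_A(0)$: Lemma \ref{lemma} is phrased in terms of $U$, not $H$, and in general $H \subsetneq U$. The cleanest route is to avoid invoking Lemma \ref{lemma} for part (c) and argue directly that a module of dimension zero perturbs only $e_d$; the fact that $\ell_A(H/(Q^{n+1}\cap H))$ is eventually the constant $\ell_A(H)$ is the one point needing care, and it follows since $Q^{n+1}\cap H = (0)$ for $n \gg 0$ when $\dim H = 0$. Once this is granted, the equality $e_1(Q) = e_1(QB)$ is immediate and the biconditional follows. I expect parts (a) and (b) to be routine, with the bookkeeping in (c) being the only place requiring genuine attention.
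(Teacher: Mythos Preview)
Your proposal is correct and follows essentially the same approach as the paper. For (a) the paper is slightly more direct, invoking the formula $e_1(Q) = -\ell_A(\H^0_\m(A))$ from \cite{GNi} immediately rather than routing through Corollary~\ref{cor}(b); for (c) the paper simply asserts $e_1(QB)=e_1(Q)$ in one line, and your worry about Lemma~\ref{lemma} being stated only for $U_A(0)$ is formally valid but harmless, since the length computation you carry out (a dimension-zero submodule contributes only a constant, hence affects only $e_d$ when $d\ge 2$) is exactly what underlies that lemma.
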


\begin{proof} (a) There exists a parameter ideal $Q$ of $A$ such that $e_1(Q)=0$. By \cite[2.4 (1)]{GNi}, we have
$0= e_1(Q) = -\ell_A(\H^0_{\m}(A))$,
which shows that $A$ is Cohen--Macaulay.

\noindent (b) It follows from Theorem~\ref{2.1}.

\noindent (c) Let $B = A/\H_\m^0(A)$. Then $e_1(QB) = e_1(Q)$ for every parameter ideal $Q$ in $A$. Hence $A$ is a Vasconcelos ring if and only if  $B$ is a Vasconcelos ring.
\end{proof}

Let $A$ be a Noetherian local ring of dimension $d \geq 1$ and let $M$ be a finite $A$--module.
We denote
\[\Assh(M) = \{  \p \in \Ass(M) \mid \dim A/\p = \dim M \}.   \]
Let ${ (0_M) = \bigcap_{\p \in \Ass(M)} M(\p) }$ be a primary decomposition, where $M(\p)$ is $\p$--primary.
The {\em unmixed component} $U_M(0)$ of $(0)$ in $M$ is
\[ U_M(0) = \bigcap_{\p \in \Assh(M)} M(\p).  \]
Note that for any $\p \in \Assh(M)$, we have $U_M(0)_{\p} = (0)$.

\begin{lem}\label{U(M)}
Let $A$ be a Noetherian local ring of dimension $d \geq 1$. Let $M$ be a finite $A$--module and $L$ a submodule of $M$. Suppose that $\dim L \leq \dim M -1$ and $\Ass(M/L) \subseteq \Assh(M)$.
Then $L=U_M(0)$.
\end{lem}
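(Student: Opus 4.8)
The plan is to set $U := U_M(0)$ and prove the two inclusions $L \subseteq U$ and $U \subseteq L$ separately, each time exploiting a mismatch between a dimension (or localization) constraint and a constraint on associated primes. Before starting, I would record two facts about $U$. First, $U_\p = (0)$ for every $\p \in \Assh(M)$, which is exactly the observation made right after the definition of the unmixed component. Second, $\Ass(M/U) \subseteq \Assh(M)$: this is immediate from the embedding $M/U \hookrightarrow \bigoplus_{\p \in \Assh(M)} M/M(\p)$ induced by the defining intersection $U = \bigcap_{\p \in \Assh(M)} M(\p)$, since each $M/M(\p)$ is $\p$--coprimary and so has $\Ass$ equal to $\{\p\}$.

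For the inclusion $L \subseteq U$, I would look at the image $\overline{L}$ of $L$ in $M/U$. On one hand $\Ass(\overline{L}) \subseteq \Ass(M/U) \subseteq \Assh(M)$, so every $\p \in \Ass(\overline{L})$ satisfies $\dim A/\p = \dim M$. On the other hand $\overline{L}$ is a homomorphic image of $L$, so $\dim \overline{L} \leq \dim L \leq \dim M - 1$; thus any $\p \in \Ass(\overline{L}) \subseteq \Supp(\overline{L})$ would force $\dim A/\p \leq \dim M - 1$, a contradiction. Hence $\Ass(\overline{L}) = \emptyset$, so $\overline{L} = (0)$ and $L \subseteq U$. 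The only hypothesis used here is $\dim L \leq \dim M - 1$.

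For the reverse inclusion $U \subseteq L$, I would use the containment $L \subseteq U$ just established to regard $U/L$ as a submodule of $M/L$. Then $\Ass(U/L) \subseteq \Ass(M/L)$, and the standing hypothesis $\Ass(M/L) \subseteq \Assh(M)$ places every associated prime of $U/L$ inside $\Assh(M)$. For such a $\p$ we have $(U/L)_\p = U_\p/L_\p = (0)$ because $U_\p = (0)$, so no $\p \in \Assh(M)$ can lie in $\Supp(U/L)$; since $\Ass(U/L) \subseteq \Supp(U/L)$, this forces $\Ass(U/L) = \emptyset$ and hence $U/L = (0)$. Combining the two inclusions yields $L = U = U_M(0)$.

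I expect the routine part to be the two associated-prime comparisons; the one point to handle with care is that the relevant dimension throughout is $\dim M$ rather than $\dim A = d$, so that the inequality $\dim A/\p = \dim M > \dim L$ for $\p \in \Assh(M)$ is precisely what drives the first inclusion. The main (though mild) obstacle is organizing the argument so the two hypotheses are deployed in the right places --- $\dim L \leq \dim M - 1$ for $L \subseteq U$, and $\Ass(M/L) \subseteq \Assh(M)$ together with the vanishing $U_\p = (0)$ for $U \subseteq L$ --- and verifying the preliminary fact $\Ass(M/U) \subseteq \Assh(M)$ before it is invoked.
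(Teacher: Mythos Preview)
Your proof is correct. The argument for $U \subseteq L$ is exactly the paper's: pick $\q \in \Ass(U/L) \subseteq \Ass(M/L) \subseteq \Assh(M)$ and observe $(U/L)_\q = 0$ since $U_\q = 0$.

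For $L \subseteq U$ you take a slightly different route from the paper. The paper argues directly at the level of the primary components: since $\dim L \le \dim M - 1$, we have $L_\p = 0$ for each $\p \in \Assh(M)$, so there is $s \in A \setminus \p$ with $sL = 0$; because $s$ is a nonzerodivisor on the $\p$-coprimary module $M/M(\p)$, this forces $L \subseteq M(\p)$, and intersecting over $\p \in \Assh(M)$ gives $L \subseteq U$. Your version instead passes to $M/U$, uses the preliminary fact $\Ass(M/U) \subseteq \Assh(M)$, and kills the image $\overline{L}$ by the dimension mismatch. Both are short and natural; the paper's route avoids the extra lemma about $\Ass(M/U)$, while yours is a bit more uniform in that both inclusions are handled by the same ``associated primes versus support'' trick.
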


\begin{proof}
Let $\dim M =g$ and $U=U_M(0)$. Since $\dim L \leq g-1$, we have $L_{\p}=(0)$ for every $\p \in \Assh(M)$.
Notice that for each $\p \in \Assh(M)$ there exists $s \in A \setminus \p$ such that $s L =(0)$.
Then $L \subseteq M(\p)$ for every $\p \in \Assh(M)$ because $s$ is a non zero divisor on $M/M(\p)$ and
$M(\p)$ is $\p$--primary.  This means that
\[ L \subseteq \bigcap_{\p \in \Assh(M)} M(\p) = U.\]
Now suppose that $L \subsetneq U$. Then exists $\q \in \Ass(U/L)$. By assumption, $\q \in \Assh(M)$. Hence $L_{\q} =(0)=U_{\q}$, so that $(U/L)_{\q}=(0)$, which is a contradiction.
\end{proof}

Here is a basic characterization of Vasconcelos rings.

\begin{thm}\label{2.5} Let $(A, \m)$ be a Noetherian local ring of dimension $d \geq 2$. Let $\widehat{A}$ be the $\m$-adic completion of $A$.
The following are equivalent{\rm :}
\begin{enumerate}
\item[{\rm (a)}] $A$ is a Vasconcelos ring{\rm ;}
\item[{\rm (b)}] $e_1(Q) = 0$ for every parameter ideal $Q$ in $A${\rm ;}
\item[{\rm (c)}] $\widehat{A}/U_{\widehat{A}}(0)$ is a Cohen--Macaulay ring and $\operatorname{dim}_{\widehat{A}} U_{\widehat{A}}(0) \le d-2${\rm ;}
\item[{\rm (d)}]  There exists a proper ideal $I$ of $\widehat{A}$ such that $\widehat{A}/I$ is a Cohen--Macaulay ring of dimension $d$ and $\operatorname{dim}_{\widehat{A}}I \le d-2$.
\end{enumerate}
When this is the case, $\widehat{A}$ is a Vasconcelos ring, $\H_{\m}^{d-1}(A) = (0)$, and the canonical module $K_{\widehat{A}}$ of $\widehat{A}$ is a Cohen--Macaulay $\widehat{A}$--module.
\end{thm}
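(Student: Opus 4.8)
The plan is to read off all three concluding assertions from the structural description of $\widehat A$ furnished by condition (c). Write $U = U_{\widehat A}(0)$ and $B = \widehat A/U$; by (c), $B$ is Cohen--Macaulay of dimension $d$ and $\dim_{\widehat A} U \le d-2$. Since $\widehat A$ is complete it is a homomorphic image of a regular local ring, so its canonical module $K_{\widehat A}$ exists and may be computed by local duality as $K_{\widehat A} = \Hom_{\widehat A}(\H_\m^d(\widehat A), E)$ with $E = E_{\widehat A}(k)$. I will also use two standard transfer facts: Hilbert coefficients are unchanged under completion, since $\ell_A(A/Q^{n+1}) = \ell_{\widehat A}(\widehat A/(Q\widehat A)^{n+1})$; and local cohomology commutes with the flat map $A \to \widehat A$, so $\H_\m^i(A) = 0$ if and only if $\H_\m^i(\widehat A) = 0$.

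For the first assertion I would apply Proposition~\ref{1.2-1} to the ring $\widehat A$ itself: because $B = \widehat A/U$ is Cohen--Macaulay and $\dim U \le d-2$, it gives $e_1(Q\widehat A) = 0$ for every parameter ideal of $\widehat A$, so $\widehat A$ is a Vasconcelos ring. (Alternatively, the length identity above already yields $e_1(Q\widehat A) = e_1(Q) = 0$.)

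The engine for the other two assertions is the long exact local cohomology sequence attached to $0 \to U \to \widehat A \to B \to 0$. Grothendieck vanishing gives $\H_\m^i(U) = 0$ for $i \ge d-1$ (as $\dim U \le d-2$), and Cohen--Macaulayness of $B$ gives $\H_\m^i(B) = 0$ for $i < d$. The segment $\H_\m^{d-1}(U) \to \H_\m^{d-1}(\widehat A) \to \H_\m^{d-1}(B)$ then reads $0 \to \H_\m^{d-1}(\widehat A) \to 0$, so $\H_\m^{d-1}(\widehat A) = 0$ and hence $\H_\m^{d-1}(A) = 0$. The adjacent segment $\H_\m^d(U) \to \H_\m^d(\widehat A) \to \H_\m^d(B) \to \H_\m^{d+1}(U)$, whose outer terms vanish, yields an isomorphism $\H_\m^d(\widehat A) \cong \H_\m^d(B)$ of $\widehat A$-modules.

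For the canonical module I would dualize this isomorphism. Applying $(-)^\vee = \Hom_{\widehat A}(-, E)$ and local duality gives $K_{\widehat A} = \H_\m^d(\widehat A)^\vee \cong \H_\m^d(B)^\vee$; since $\H_\m^d(B)$ is a $B$-module, the adjunction $\Hom_{\widehat A}(-,E) = \Hom_B(-, \Hom_{\widehat A}(B,E))$ together with $\Hom_{\widehat A}(B,E) = E_B(k)$ identifies $\H_\m^d(B)^\vee$ with the canonical module $K_B$ of $B$. Thus $K_{\widehat A} \cong K_B$, and as $B$ is Cohen--Macaulay of dimension $d$, $K_B$ is a maximal Cohen--Macaulay $B$-module; since depth and dimension of a module are unaffected by restriction of scalars along $\widehat A \twoheadrightarrow B$, the module $K_{\widehat A} \cong K_B$ is Cohen--Macaulay over $\widehat A$ of dimension $d$. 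The main obstacle is precisely this last identification $K_{\widehat A} \cong K_B$: the local cohomology bookkeeping is routine, but one must verify that the Matlis dual of the $B$-module $\H_\m^d(B)$ computed over $\widehat A$ coincides with its canonical module computed over $B$, which is where the adjunction and the identity $\Hom_{\widehat A}(B,E) = E_B(k)$ enter.
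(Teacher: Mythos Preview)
Your proposal addresses only the concluding assertions (``When this is the case, \ldots''), not the equivalence of conditions (a)--(d), which is the main content of the theorem. The paper establishes the cycle by invoking Theorem~\ref{1.2-2} for (a)$\Rightarrow$(c) (after passing to the completion $\widehat A$, which is a homomorphic image of a regular ring, so that Theorem~\ref{1.2-2} applies), Proposition~\ref{1.2-1} for (c)$\Rightarrow$(b), and the definition for (b)$\Rightarrow$(a); the implication (c)$\Rightarrow$(d) is trivial, and (d)$\Rightarrow$(c) comes from Lemma~\ref{U(M)}, which identifies any ideal $I$ as in (d) with $U_{\widehat A}(0)$. You would need to supply these steps, or equivalent ones, to have a complete proof.

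For the concluding assertions themselves your argument is correct and essentially identical to the paper's: both read $\H_{\m}^{d-1}(\widehat A)=(0)$ and $\H_{\m}^d(\widehat A)\cong\H_{\m}^d(B)$ off the long exact sequence attached to $0\to U\to\widehat A\to B\to 0$, using $\dim U\le d-2$ and the Cohen--Macaulayness of $B$, and then conclude that $K_{\widehat A}\cong K_B$ is Cohen--Macaulay. The paper simply asserts $K_{\widehat A}\cong K_{\widehat A/U}$ without spelling out the Matlis-dual and adjunction justification you provide; your extra care there is fine but not essential.
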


\begin{proof} (c) $\Rightarrow$ (b) follows from Proposition~\ref{1.2-1} and (a) $\Rightarrow$ (c) follows from Theorem~\ref{1.2-2}. The implications (b) $\Rightarrow$ (a) and (c) $\Rightarrow$ (d) are trivial.
Finally (d) $\Rightarrow$ (c) follows from Lemma~\ref{U(M)}.

To see the last assertions,
 let $U=U_{\widehat{A}}(0)$ and let $\widehat{\m} = \m \widehat{A}$ be the maximal ideal of $\widehat{A}$. Then $\widehat{A}/U$ is a Cohen--Macaulay ring and $\operatorname{dim}_{\widehat{A}} U \le d-2$. In particular, $\H_{\widehat{\m}}^{d-1}(\widehat{A}/U) =(0) = \H_{\widehat{\m}}^{d-1}(U)$. Hence $\H_{\widehat{\m}}^{d-1}(\widehat{A}) = (0)$. Moreover we have $\H_{\widehat{\m}}^{d}(\widehat{A}) \cong \H_{\widehat{\m}}^{d}(\widehat{A}/U)$, which means that $\mathrm{K}_{\widehat{A}} \cong \mathrm{K}_{\widehat{A}/U}$. Since $\mathrm{K}_{\widehat{A}/U}$ is Cohen--Macaulay, so is $\mathrm{K}_{\widehat{A}}$.
\end{proof}

\begin{cor}\label{cor4.3}
Let $A$ be a Vasconcelos ring of dimension $d \geq 1$.  If $x$ is a nonzerodivisor in $A$, then $A/xA$ is a Vasconcelos ring.
\end{cor}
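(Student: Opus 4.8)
The plan is to reduce to the complete case and then invoke the structural characterization of Vasconcelos rings in Theorem~\ref{2.5}. We may assume $x \in \m$, since otherwise $A/xA = 0$. Being a Vasconcelos ring is insensitive to completion: for $\dim \le 1$ this follows from Proposition~\ref{Vring}(a) together with the fact that Cohen--Macaulayness is detected by the completion, while for $\dim \ge 2$ it is immediate because conditions (c) and (d) of Theorem~\ref{2.5} depend only on $\widehat{A}$. Since $\widehat{A/xA} \cong \widehat{A}/x\widehat{A}$ and $x$ remains a nonzerodivisor in $\widehat{A}$ by faithful flatness, it suffices to treat the case where $A$ is complete. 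If $d = 1$, then $A$ is Cohen--Macaulay by Proposition~\ref{Vring}(a), so $A/xA$ has dimension $0$ and is a Vasconcelos ring by definition. Hence assume $d \ge 2$, so that $\dim A/xA = d-1$.

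Write $U = U_A(0)$ and $C = A/U$. By Theorem~\ref{2.5}(c), $C$ is Cohen--Macaulay of dimension $d$ and $\dim_A U \le d-2$. First I would verify that $x$ is a nonzerodivisor on both $U$ and $C$: on $U$ this is immediate from $U \subseteq A$, while for $C$ the description $U = \bigcap_{\p \in \Assh(A)} I(\p)$ yields an embedding $C \hookrightarrow \bigoplus_{\p \in \Assh(A)} A/I(\p)$, so $\Ass(C) \subseteq \Assh(A) \subseteq \Ass(A)$ and $x$ avoids every associated prime of $C$. Thus $C/xC$ is Cohen--Macaulay of dimension $d-1$. Tensoring $0 \to U \to A \to C \to 0$ with $A/xA$ and using $\Tor_1^A(C, A/xA) = \Ker(x \colon C \to C) = 0$, I obtain the short exact sequence
\[ 0 \lar U/xU \lar A/xA \lar C/xC \lar 0. \]

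The crucial step is the dimension estimate $\dim_A(U/xU) \le \dim_A U - 1$. Because $x$ is a nonzerodivisor on $U$, it lies outside every associated prime of $U$, hence outside every minimal prime of $\Supp U$; in particular it avoids the minimal primes of maximal dimension, so none of the top-dimensional components of $\Supp U$ survives in $\Supp(U/xU)$ and the dimension drops strictly (read $U/xU = 0$ when $U = 0$). Therefore $\dim_A(U/xU) \le \dim_A U - 1 \le d-3$. If $d = 2$ this forces $U/xU = 0$, whence $U = 0$ by Nakayama, so $A = C$ is Cohen--Macaulay and $A/xA$ is Cohen--Macaulay of dimension $1$, hence Vasconcelos. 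If $d \ge 3$, then $B := A/xA$ is complete of dimension $d-1 \ge 2$, and the ideal $I$ of $B$ that is the image of $U/xU$ satisfies $B/I \cong C/xC$, Cohen--Macaulay of dimension $d-1 = \dim B$, together with $\dim_B I \le d-3 = \dim B - 2$. Thus condition (d) of Theorem~\ref{2.5} is met for $B$, and $B = A/xA$ is a Vasconcelos ring.

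The main obstacle I anticipate is precisely the estimate $\dim(U/xU) \le \dim U - 1$: one must use that $x$ is a nonzerodivisor on the whole ring $A$ to strip off the top-dimensional part of the submodule $U$, rather than merely quoting the trivial bound $\dim(U/xU) \le \dim U$, which would yield $\dim_B I \le d-2$ and fall one short of hypothesis (d). The remaining steps are routine manipulations with Theorem~\ref{2.5} and the associated-prime bookkeeping for $C$.
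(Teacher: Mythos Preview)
Your proof is correct and follows essentially the same route as the paper's: reduce to the complete case, invoke Theorem~\ref{2.5} to get $C=A/U$ Cohen--Macaulay with $\dim U\le d-2$, observe that $x$ is regular on both $U$ and $C$, deduce $\dim(U/xU)\le d-3$ and that $C/xC$ is Cohen--Macaulay, and then reapply Theorem~\ref{2.5}(d) to $A/xA$. The paper's proof is a terse version of exactly this argument.

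You are in fact slightly more careful than the paper in one place: when $d=2$, the ring $A/xA$ has dimension $1$, so Theorem~\ref{2.5} (which assumes dimension $\ge 2$) cannot be invoked directly for $A/xA$; you correctly note that in this case $U/xU=0$, hence $U=0$ by Nakayama and $A/xA$ is Cohen--Macaulay of dimension $1$. The paper's final appeal to Theorem~\ref{2.5} tacitly skips this, though the same computation $\dim U/xU\le -1$ is present.
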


\begin{proof}
We may assume that $A$ is complete and that $d\geq 2$. Let $U = U_A(0)$  and $C = A/U$.
By Theorem~\ref{2.5}, $C$ is a $d$--dimensional Cohen--Macaulay ring and $\dim U \leq d-2$. Then $\dim U/xU  \leq d-3$. Since $x$ is $C$--regular, $C/xC$ is Cohen--Macaulay. By Theorem~\ref{2.5}, $A/xA$ is a Vasconcelos ring.
\end{proof}

\begin{ex}\label{2.7}
Let $(R, \n)$ be a Cohen--Macaulay local ring of dimension $d \geq 3$. Let $x$ be a nonzerodivisor of $R$ and let $\a$ be an $R$--ideal such that $\mathrm{ht}(\a) \ge 3$ and $x \not\in \a$. Let $A= R/(xR \cap \a)$.
Then $A$ is a non--Cohen--Macaulay Vasconcelos ring of dimension $d-1 \geq 2$.
\end{ex}

\begin{proof}
 Consider the exact sequence $0 \to R/(\a: x) \to A \to R/xR \to 0$.  Then $A/xA$ is Cohen--Macaulay. Also we have
 \[\dim_A(xA) = \dim R/(\a : x) \leq \dim R/\a \leq d-3 = \dim A-2.\]
 By Theorem~\ref{2.5}, $A$ is a non-Cohen--Macaulay Vasconcelos ring with $\dim A = d-1 \geq 2$.
\end{proof}

\begin{ex}\label{ex2.7}
Let $(R, \n)$ be a Cohen--Macaulay local ring with $\dim R = d \geq 2$. Let $M$ be a finitely generated $R$-module with $\dim_RM \le d-2$. Then the idealization $A= R \ltimes M$ is a Vasconcelos ring with $\dim A = d$. Also we have $\Assh(A) = \Min(A)$. Moreover if $R=k [[x,y]]$ and $M=R/(x,y)^2$, then $A=R \ltimes M$ is not quasi--Buchsbaum.
\end{ex}

\begin{proof}
Let $N =(0) \times M$. Consider the exact sequence $0 \to N  \to A \overset{\varepsilon}{\to} R \to 0$, where
$\varepsilon (x) = r$ for each $x=(r,m) \in A$. Then $A/N \simeq R$ is Cohen--Macaulay. Also $\dim_AN = \dim_RM \le d-2$. By Theorem~\ref{2.5}, $A$ is a Vasconcelos ring with $\dim A = d$. Moreover we have
 \[ \Assh(A) = \{\p \times M \mid \p \in \Assh(R) \} = \Min(A).\]
\end{proof}

We are now interested in the question of how Vasconcelos rings are preserved under flat base changes.

\begin{thm}\label{thm4.6}
Let $(A, \m)$ and $(B, \n)$ be Noetherian local rings and let $\varphi : A \to B$ be a flat local homomorphism.
Then the following assertions hold true.
\begin{enumerate}
\item[{\rm (a)}] Suppose that $A$ is a Vasconcelos ring and $B/\m B$ is a Cohen--Macaulay ring. Then $B$ is a Vasconcelos ring.
\item[{\rm (b)}] Suppose that $B$ is a homomorphic image of a Cohen--Macaulay ring. If $B$ is a Vasconcelos ring and  $\Ass_B (B/\p B) = \Assh_B(B/\p B)$ for every $\p \in \Assh(A)$, then
 $A$ is a Vasconcelos ring and $B/\m B$ is a Cohen--Macaulay ring.
\end{enumerate}
\end{thm}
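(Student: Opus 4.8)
The plan is to translate both implications into the structural description of Vasconcelos rings furnished by Theorem~\ref{2.5} and Theorem~\ref{1.2-2}, and to move the unmixed component of $(0)$ across $\varphi$ using the standard behaviour of a flat local homomorphism. I will freely use that such a $\varphi$ is faithfully flat, that $\dim B = \dim A + \dim B/\m B$ and more generally $\dim_B(M\otimes_A B) = \dim_A M + \dim B/\m B$ for every finite $A$-module $M$, that $\Ass_B(M\otimes_A B) = \bigcup_{\p\in\Ass_A M}\Ass_B(B/\p B)$, and that $B$ is Cohen--Macaulay if and only if both $A$ and the closed fibre $B/\m B$ are Cohen--Macaulay (ascent and descent along $\varphi$). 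Since $\dim A \le \dim B$, the case $\dim B \le 1$ is immediate: a Vasconcelos ring of dimension $\le 1$ is Cohen--Macaulay (Proposition~\ref{Vring}(a), the zero-dimensional case being trivial), so in (a) the ring $A$ is Cohen--Macaulay and ascent gives $B$ Cohen--Macaulay, while in (b) the ring $B$ is Cohen--Macaulay and descent gives $A$ and $B/\m B$ Cohen--Macaulay. Hence I may assume $\dim B \ge 2$.

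For (a), being a Vasconcelos ring depends only on the completion (Theorem~\ref{2.5}), so I pass to the induced flat local map $\widehat A \to \widehat B$, whose closed fibre is the completion of $B/\m B$ and hence Cohen--Macaulay, and assume $A$ and $B$ complete. By Theorem~\ref{2.5} write $U = U_A(0)$ and $C = A/U$, so $C$ is Cohen--Macaulay of dimension $d = \dim A$ and $\dim U \le d-2$. Tensoring $0\rar U\rar A\rar C\rar 0$ with $B$ gives $0\rar UB\rar B\rar C\otimes_A B\rar 0$ with $UB\cong U\otimes_A B$. Now $C\otimes_A B$ is flat local over the Cohen--Macaulay ring $C$ with closed fibre $B/\m B$, hence is Cohen--Macaulay of dimension $\dim B$, while $\dim_B UB = \dim_A U + \dim B/\m B \le \dim B - 2$. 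Thus $UB$ is a proper ideal of $B$ satisfying condition (d) of Theorem~\ref{2.5}, and therefore $B$ is a Vasconcelos ring.

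For (b) I work with $B$ directly, the hypothesis that $B$ is a homomorphic image of a Cohen--Macaulay ring being exactly what allows Theorem~\ref{1.2-2} to be applied to $B$ itself. As $\dim B\ge 2$ and $B$ is Vasconcelos, $e_1(QB)=0$, so Theorem~\ref{1.2-2} gives that $B/U_B(0)$ is Cohen--Macaulay with $\dim_B U_B(0)\le \dim B - 2$. The decisive step is to show $U_B(0) = U_A(0)\,B$. Setting $U = U_A(0)$ and $C = A/U$ we have $\Ass_A(C) = \Assh(A)$, so the base-change formula for associated primes together with the hypothesis $\Ass_B(B/\p B) = \Assh_B(B/\p B)$ for $\p\in\Assh(A)$ yields $\Ass_B(C\otimes_A B) = \bigcup_{\p\in\Assh(A)}\Assh_B(B/\p B)\subseteq \Assh_B(B)$, since each such $\p$ satisfies $\dim B/\p B = \dim B$. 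As $\dim_A U \le d-1$ (Lemma~\ref{lemma}(a)), we have $\dim_B UB = \dim_A U + \dim B/\m B \le \dim B - 1$, so Lemma~\ref{U(M)} applied to $UB\subseteq B$ gives $UB = U_B(0)$, whence $B/U_B(0) = C\otimes_A B$. The inequality $\dim_A U + \dim B/\m B = \dim_B U_B(0)\le \dim B - 2$ now forces $\dim_A U \le d-2$, and descent along the flat local map $C\to C\otimes_A B$ shows that $C = A/U$ is Cohen--Macaulay and that its closed fibre $B/\m B$ is Cohen--Macaulay. Thus $A/U_A(0)$ is Cohen--Macaulay with $\dim U_A(0)\le d-2$, so $A$ is a Vasconcelos ring (Proposition~\ref{1.2-1} when $d\ge 2$; when $d\le 1$ the bound forces $U_A(0)=0$, so $A$ is Cohen--Macaulay), and $B/\m B$ is Cohen--Macaulay, as required.

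The main obstacle is precisely the identification $U_B(0) = U_A(0)\,B$ in part (b): the fibre condition $\Ass_B(B/\p B) = \Assh_B(B/\p B)$ over the top-dimensional primes $\p\in\Assh(A)$ is what pins down the associated primes of $B/U_A(0)B$ after base change and lets Lemma~\ref{U(M)} recognize this ideal as the unmixed component of $B$; without it the primes acquired along the fibres are uncontrolled. The remaining content is routine bookkeeping with the dimension and associated-prime formulas for flat local maps and with ascent and descent of Cohen--Macaulayness.
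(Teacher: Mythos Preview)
Your proof is correct and mirrors the paper's approach: both parts hinge on the structural characterizations of Theorems~\ref{2.5} and~\ref{1.2-2}, the identification $U_B(0)=U_A(0)B$ via Lemma~\ref{U(M)} together with the base-change formula for associated primes, and standard ascent/descent of Cohen--Macaulayness along $\varphi$. The only slip is that in (a) you reduce to $\dim B\ge 2$ but then invoke Theorem~\ref{2.5} on $A$, which requires $\dim A\ge 2$; the leftover case $\dim A\le 1$ is harmless since then $A$ is Cohen--Macaulay and ascent finishes.
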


\begin{proof}
Let $U=U_A(0)$.
Notice that $(A/U) \otimes_A B$ is Cohen--Macaulay if and only if both $A/U$ and $B/\m B$ are Cohen--Macaulay.

\medskip

\noindent (a)
We may assume that $A$ is complete and $\dim A=d>0$. Since $A$ is a Vasconcelos ring, by Theorem~\ref{2.5} $A/U$ is Cohen--Macaulay and $\dim_A U \leq d-2$. Since $B/\m B$ is also Cohen--Macaulay by assumption, $B/UB \simeq (A/U) \otimes_A B$ is Cohen--Macaulay. Moreover we have
\[ \dim_B (UB) = \dim_B (U \otimes_A B) = \dim_A U + \dim_B (B/\m B) \leq d-2 +   \dim_B (B/\m B) = \dim B-2.\]
Hence by Proposition~\ref{1.2-1}, $B$ is a Vasconcelos ring.

\medskip

\noindent (b) Suppose that $A$ is Cohen--Macaulay. It is enough to show that $B$ is Cohen--Macaulay.
Let $\P \in \Ass(B)$ and $\p = \P \cap A$.
Then $\p \in \Ass(A) = \Assh(A)$. We have
 $\P \in \Ass(B/\p B) = \Assh(B/\p B)$ by assumption. This means that
 \[
 \dim B/\P = \dim B/\p B = \dim_A (A/ \p) + \dim_B (B/\m B) = d+ \dim_B (B/\m B)= \dim B.
 \]
Therefore $ \Ass(B) = \Assh(B)$, which shows that $B$ is an unmixed Vasconcelos ring.
By Proposition~\ref{Vring} (b), $B$ is Cohen--Macaulay.

Now suppose that $A$ is not Cohen--Macaulay. We claim that $UB=U_B(0)$, the unmixed component of $(0)$ in $B$. Let $\P \in \Ass(B/ UB)$. Then $\P \in \Ass(B/\p B)$ for some $\p \in \Ass(A/U)=\Assh(A)$. By assumption, we have $\P \in \Assh(B/\p B)$. This means that $\dim B/\P = \dim B/\p B = \dim B$. Hence $\Ass(B/UB)= \Assh(B)$, which proves the claim.
Now since $B$ is a Vasconcelos ring, by Theorem~\ref{1.2-2} we have that $B/UB$ is Cohen--Macaulay and $\dim_B UB \leq \dim B-2$. Therefore both $A/U$ and $B/\m B$ are Cohen--Macaulay because $(A/U) \otimes_A B \simeq B/UB$ is Cohen--Macaulay. Also $\dim_B UB \leq \dim B-2$ implies that $\dim U \leq d-2$. In particular, $A$ is a Vasconcelos ring by Proposition~\ref{1.2-1}.
\end{proof}

\begin{cor}
Let $(A,\m)$ be a Vasconcelos ring and let $R = A[X_1, X_2, \cdots, X_n]$ be the polynomial ring. Then $R_P$ is a Vasconcelos ring for every $P \in \mathrm{V}(\m R)$.
\end{cor}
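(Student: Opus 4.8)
The plan is to deduce this from Theorem~\ref{thm4.6}(a), applied to the natural map $\varphi\colon A \to R_P$. The strategy is to exhibit $\varphi$ as a flat local homomorphism whose closed fibre $R_P/\m R_P$ is Cohen--Macaulay, and then invoke that theorem with $B = R_P$; since $A$ is a Vasconcelos ring by hypothesis, the conclusion will follow at once. Observe also that $R = A[X_1,\dots,X_n]$ is Noetherian by the Hilbert basis theorem, so $R_P$ is a Noetherian local ring, as required for the setup.

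First I would verify that $\varphi$ is a flat local homomorphism. Flatness is immediate: $R$ is free, hence flat, over $A$, and $R \to R_P$ is flat, so the composite $A \to R_P$ is flat. For locality, note that $P \in \mathrm{V}(\m R)$ means $P \supseteq \m R$, so the contraction $P \cap A$ is a prime of $A$ containing $\m$; since $\m$ is maximal this forces $P \cap A = \m$, whence $\varphi^{-1}(PR_P) = \m$ and $\varphi$ is local.

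Next I would identify the closed fibre. Writing $k = A/\m$, one has $R/\m R = k[X_1,\dots,X_n]$, and because $\m R \subseteq P$ the prime $P$ determines a prime $\bar P$ of $k[X_1,\dots,X_n]$ for which
\[
R_P/\m R_P \;\cong\; (R/\m R)_{\bar P} \;=\; k[X_1,\dots,X_n]_{\bar P}.
\]
This is a localization of a polynomial ring over a field, hence a regular local ring, and in particular Cohen--Macaulay. With $A$ a Vasconcelos ring and the closed fibre Cohen--Macaulay, Theorem~\ref{thm4.6}(a) gives that $R_P$ is a Vasconcelos ring.

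The argument is in essence a verification of the hypotheses of Theorem~\ref{thm4.6}(a), so I expect no serious obstacle. The only points requiring care are the locality of $\varphi$ — which is precisely where the hypothesis $P \in \mathrm{V}(\m R)$ is used, guaranteeing $P$ contracts to $\m$ — and the fibre computation $R_P/\m R_B \cong k[X_1,\dots,X_n]_{\bar P}$, from which Cohen--Macaulayness of the closed fibre is read off.
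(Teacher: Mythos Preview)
Your proof is correct and is exactly the approach the paper intends: the corollary is stated without proof immediately after Theorem~\ref{thm4.6}, and is meant to follow from part~(a) of that theorem applied to the flat local homomorphism $A \to R_P$ with regular (hence Cohen--Macaulay) closed fibre $k[X_1,\dots,X_n]_{\bar P}$. (There is a harmless typo near the end where you wrote $R_P/\m R_B$ instead of $R_P/\m R_P$.)
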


\begin{cor} Let $(A,\m)$ and  $(B,\n)$ be Noetherian local rings which are homomorphic images of Cohen--Macaulay rings. Let $\varphi : A \to B$ be a flat local homomorphism. Then the following two conditions are equivalent{\rm :}
\begin{enumerate}
\item[{\rm (a)}]  $A$ is a Vasconcelos ring and $B/\m B$ is a Cohen--Macaulay ring{\rm ;}
\item[{\rm (b)}] $B$ is a Vasconcelos ring and $\operatorname{Ass}_B(B/\p B) = \operatorname{Assh}_B(B/\p B)$ for every $\p \in \operatorname{Assh} (A)$.
\end{enumerate}
\end{cor}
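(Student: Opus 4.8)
The plan is to read the corollary as the conjunction of the two halves of Theorem~\ref{thm4.6} and to supply the one extra ingredient that is not already stated there. The implication (b) $\Rightarrow$ (a) requires no new work: since $B$ is assumed to be a homomorphic image of a Cohen--Macaulay ring, condition (b) is exactly the hypothesis of Theorem~\ref{thm4.6}\,(b), and its conclusion is precisely (a).

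For (a) $\Rightarrow$ (b), Theorem~\ref{thm4.6}\,(a) already gives that $B$ is a Vasconcelos ring, so the only thing left to prove is the primary--decomposition condition $\Ass_B(B/\p B)=\Assh_B(B/\p B)$ for every $\p\in\Assh(A)$. First I would pass to the unmixed layer of $A$. Because $A$ is a homomorphic image of a Cohen--Macaulay ring and $e_1(Q)=0$, Theorem~\ref{1.2-2} shows that $C:=A/U$ is Cohen--Macaulay of dimension $d$ with $\dim_A U\le d-2$, where $U=U_A(0)$; hence $\Assh(A)=\Ass(C)=\Min(C)$ and $U\subseteq\p$ for every $\p\in\Assh(A)$. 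Base changing $\varphi$ along $A\to C$ yields a flat local homomorphism $C\to D$ with $D:=B/UB\cong C\otimes_AB$ and closed fibre $B/\m B$; since both $C$ and $B/\m B$ are Cohen--Macaulay, $D$ is Cohen--Macaulay, and as a Cohen--Macaulay local ring it is equidimensional with $\dim D/\P=\dim D$ for all $\P\in\Min(D)$.

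Now fix $\p\in\Assh(A)$ and put $\overline{\p}=\p/U\in\Min(C)$, so that $B/\p B=D/\overline{\p}D$ and the fibre of $C\to D$ over $\overline{\p}$ is $G:=B\otimes_A\kappa(\p)$. Since $D$ is Cohen--Macaulay and flat over $C$, all fibres of $C\to D$ are Cohen--Macaulay, so $G$ is Cohen--Macaulay. The standard behaviour of associated primes under flat base change identifies $\Ass_B(B/\p B)$ with the primes of $B$ lying over $\p$ whose image in $G$ is an associated prime of $G$; as $G$ has no embedded primes, neither does $B/\p B$. For equidimensionality I would apply the height formula for the flat local map: if $\P$ is minimal over $\overline{\p}D$ then $\P\cap C=\overline{\p}$ and $\operatorname{ht}_D\P=\operatorname{ht}_C\overline{\p}+\operatorname{ht}_G\overline{\P}=0$, because $\overline{\p}\in\Min(C)$ and $\overline{\P}$ is minimal in the fibre $G$. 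Thus every minimal prime of $B/\p B$ is already a minimal prime of $D$, whence $\dim B/\P=\dim D=\dim(B/\p B)$. Together with the absence of embedded primes this gives $\Ass_B(B/\p B)=\Min(B/\p B)=\Assh_B(B/\p B)$, as required.

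The main obstacle is exactly this equidimensionality step: one has to rule out that some minimal prime of $B/\p B$ drops in dimension. The point to stress is that this cannot be read off from $B$ directly, since a Vasconcelos ring need not be equidimensional (its low--dimensional unmixed component $U_B(0)$ may contribute minimal primes of dimension $<\dim B$); it is the passage to the genuinely Cohen--Macaulay ring $D=B/UB$ and the fibrewise Cohen--Macaulayness of $C\to D$ that forces the minimal primes over $\overline{\p}D$ to have height zero in $D$.
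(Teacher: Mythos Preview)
Your argument is correct and follows the same overall strategy as the paper: for (b) $\Rightarrow$ (a) both you and the paper simply invoke Theorem~\ref{thm4.6}(b), and for (a) $\Rightarrow$ (b) both reduce to the Cohen--Macaulay ring $D=B\otimes_A A/U$. The only difference is in the execution of the last step. You split the verification of $\Ass_B(B/\p B)=\Assh_B(B/\p B)$ into two parts (no embedded primes via Cohen--Macaulayness of the fibre $G$, and equidimensionality of the minimal primes via the height formula for flat maps). The paper handles both at once with a one-line containment: since $\p\in\Ass(A/U)$ there is an embedding $A/\p\hookrightarrow A/U$, and tensoring with the flat $A$-algebra $B$ gives $B/\p B\hookrightarrow D$, whence $\Ass_B(B/\p B)\subseteq\Ass_B(D)$; as $D$ is Cohen--Macaulay local of dimension $\dim B$, every $\P\in\Ass_B(D)$ satisfies $\dim B/\P=\dim B=\dim(B/\p B)$. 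This bypasses the separate analysis of the fibre $G$ and the height formula. Your route has the small advantage of making explicit that the fibres $B\otimes_A\kappa(\p)$ over $\Assh(A)$ are Cohen--Macaulay, but the paper's containment argument is shorter and avoids the auxiliary claims.
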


\begin{proof}
(a) $\Rightarrow$ (b) Let $\p \in \operatorname{Assh}(A)$. Then, since $\p \in \operatorname{Ass}(A/U)$, we have that $\operatorname{Ass}_B (B/\p B) \subseteq \operatorname{Ass}_B (B \otimes_AA/U)$. Hence $\operatorname{dim} B/\P = \operatorname{dim} B$ for every $\P \in \operatorname{Ass}_B(B/\p B)$, since $B \otimes_AA/U$ is a Cohen--Macaulay ring with $\operatorname{dim} (B\otimes_AA/U) = \operatorname{dim} B$.
\end{proof}

Next we show that quasi-unmixed Vasconcelos rings behave well under localization.

\begin{prop}
Let $A$ be a Noetherian local ring of dimension $d$. Suppose that $A$ is a homomorphic image of a Cohen--Macaulay ring and $\operatorname{Assh} (A) = \operatorname{Min} (A)$. If $A$ is a Vasconcelos ring, then $A_\p$ is  a Vasconcelos ring for every $\p \in \operatorname{Spec} A$.
\end{prop}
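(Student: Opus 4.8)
The plan is to reduce to the structure theorem for Vasconcelos rings that are homomorphic images of Cohen--Macaulay rings and then to localize, the point being that the hypothesis $\operatorname{Assh}(A)=\operatorname{Min}(A)$ forces the unmixed component to commute with localization. First I would dispose of the low-dimensional cases: if $d=\dim A\le 1$, then $A$ is a Vasconcelos ring of dimension $\le 1$, hence Cohen--Macaulay (trivially if $d=0$, and by Proposition~\ref{Vring}(a) if $d=1$); since any localization of a Cohen--Macaulay ring is Cohen--Macaulay, each $A_\p$ is Cohen--Macaulay and therefore Vasconcelos. So I may assume $d\ge 2$. Writing $U=U_A(0)$ and $C=A/U$, Theorem~\ref{1.2-2} gives that $C$ is Cohen--Macaulay of dimension $d$ and that $\dim_A U\le d-2$.

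Next I would record two structural facts. Because $A$ is a homomorphic image of a Cohen--Macaulay ring it is catenary, and because $\operatorname{Assh}(A)=\operatorname{Min}(A)$ it is equidimensional; hence the dimension formula $\height\p+\dim A/\p=d$ holds for every $\p\in\operatorname{Spec}A$, and more generally $\height(\p/\q)+\dim A/\p=\dim A/\q$ in the catenary local domain $A/\q$ for $\q\subseteq\p$. Moreover, since $U=\bigcap_{\q\in\operatorname{Min}(A)}I(\q)\subseteq\bigcap_{\q\in\operatorname{Min}(A)}\q=\sqrt{(0)}$, we have $U\subseteq\p$ for every prime $\p$; in particular $C_\p=(A/U)_\p$ is a nonzero localization of the Cohen--Macaulay ring $C$, hence is Cohen--Macaulay.

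The heart of the argument is the identity $U_{A_\p}(0)=U_\p$. To establish it I would compute $\operatorname{Assh}(A_\p)$: the associated primes of $A_\p$ are the $\q A_\p$ with $\q\in\operatorname{Ass}(A)$ and $\q\subseteq\p$, and by the dimension formula $\dim A_\p/\q A_\p=\dim A/\q-\dim A/\p$, which equals $\dim A_\p=d-\dim A/\p$ precisely when $\dim A/\q=d$, i.e. when $\q\in\operatorname{Assh}(A)=\operatorname{Min}(A)$. Thus $\operatorname{Assh}(A_\p)=\{\q A_\p:\q\in\operatorname{Min}(A),\ \q\subseteq\p\}$, and intersecting the corresponding primary components of $(0)$ in $A_\p$ (the components with $\q\not\subseteq\p$ localize to $A_\p$) yields $U_{A_\p}(0)=\bigcap_{\q\in\operatorname{Min}(A),\,\q\subseteq\p}I(\q)_\p=U_\p$. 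The same dimension formula, applied in each $A/\q$ for $\q\in\operatorname{Supp}(U)$ with $\q\subseteq\p$, gives $\dim_{A_\p}U_\p=\max_\q\bigl(\dim A/\q-\dim A/\p\bigr)\le\dim_A U-\dim A/\p\le(d-2)-\dim A/\p=\dim A_\p-2$.

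Finally I would assemble these. If $U_\p=0$ then $A_\p\cong C_\p$ is Cohen--Macaulay, hence Vasconcelos. If $U_\p\ne 0$ then $\dim_{A_\p}U_\p\ge 0$, so the displayed estimate forces $\dim A_\p\ge 2$; since $A_\p/U_{A_\p}(0)=C_\p$ is Cohen--Macaulay and $\dim_{A_\p}U_{A_\p}(0)\le\dim A_\p-2$, Proposition~\ref{1.2-1} shows $e_1(Q')=0$ for every parameter ideal $Q'$ of $A_\p$, so $A_\p$ is a Vasconcelos ring. The step I expect to be the main obstacle is the identity $U_{A_\p}(0)=U_\p$: this is exactly where the hypothesis $\operatorname{Assh}(A)=\operatorname{Min}(A)$ enters, since in general localization need not commute with passage to the unmixed component, and it is equidimensionality together with the catenary dimension formula that forces $\operatorname{Assh}(A_\p)$ to arise only from the minimal primes of $A$.
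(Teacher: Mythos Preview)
Your proof is correct and follows essentially the same strategy as the paper's: reduce via Theorem~\ref{1.2-2} to the data that $C=A/U$ is Cohen--Macaulay and $\dim_A U\le d-2$, use catenarity and equidimensionality (from $\operatorname{Assh}(A)=\operatorname{Min}(A)$ and the Cohen--Macaulay presentation) to control dimensions under localization, and conclude by the converse direction.

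The one place the two arguments diverge is in the mechanism for the bound $\dim_{A_\p}U_\p\le\dim A_\p-2$. You compute this directly via the dimension formula in the catenary local domains $A/\q$; the paper instead passes through the annihilator $\a=(0):U$, notes that $\operatorname{ht}_A\a\ge 2$ (from $\dim A/\a=\dim_AU\le d-2$ and the height--codimension formula in catenary equidimensional rings), and then uses $\operatorname{ht}_{A_\p}\a A_\p\ge\operatorname{ht}_A\a$ to get $\dim A_\p/\a A_\p\le\dim A_\p-2$. These are equivalent packagings of the same fact. Your proof is a bit more explicit in one respect: you verify $U_{A_\p}(0)=U_\p$ directly, which is what Proposition~\ref{1.2-1} formally requires. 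The paper invokes Theorem~\ref{1.2-2} at the end without isolating this identification; it is implicit there, since once $C_\p$ is Cohen--Macaulay of dimension $\dim A_\p$ and $\dim_{A_\p}U_\p\le\dim A_\p-2$, Lemma~\ref{U(M)} (or equivalently Theorem~\ref{2.5}(d)) forces $U_\p=U_{A_\p}(0)$.
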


\begin{proof}
We may assume that $A$ is not a Cohen--Macaulay ring. Let $U = U_A(0)$. Then $U \ne (0)$ by Theorem \ref{1.2-2}. Let $\p \in \operatorname{Spec} A$. Notice that $U\subseteq \p$. Since $(A/U)_\p$ is a Cohen--Macaulay ring we may assume that $U_\p \ne (0)$. Let $\a = (0) : U$. Then $\a \ne A$ and $\operatorname{ht}_A\a \ge 2$, since $\dim U \le d-2$ and $A$ is catenary and equidimensional. Hence $\operatorname{ht}_{A_\p}\a A_\p \ge 2$ and $\operatorname{dim} A_\p / \a A_\p  \le \operatorname{dim}A_\p - 2.$ Therefore $A_\p$ is a Vasconcelos ring by Theorem \ref{1.2-2}.
 \end{proof}

\begin{cor}
Suppose that $A$ is a Vasconcelos ring and $\operatorname{Assh}(\widehat{A}) = \operatorname{Min}(\widehat{A})$.  Then $\widehat{A}_\p$ is a Vasconcelos ring for every $\p \in \operatorname{Spec}\widehat{A}$.
\end{cor}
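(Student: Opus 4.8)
The plan is to apply the preceding Proposition directly to the complete local ring $\widehat{A}$ in place of $A$. To do so I must verify its three hypotheses for $\widehat{A}$: that $\widehat{A}$ is a homomorphic image of a Cohen--Macaulay ring, that $\operatorname{Assh}(\widehat{A}) = \operatorname{Min}(\widehat{A})$, and that $\widehat{A}$ is a Vasconcelos ring.

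First I would observe that $\widehat{A}$ is a complete Noetherian local ring, so by Cohen's structure theorem it is a homomorphic image of a regular local ring. Since regular local rings are Cohen--Macaulay, $\widehat{A}$ is a homomorphic image of a Cohen--Macaulay ring, and the first hypothesis holds. The second hypothesis, $\operatorname{Assh}(\widehat{A}) = \operatorname{Min}(\widehat{A})$, is exactly the standing assumption of the corollary. For the third, I invoke the concluding assertion of Theorem~\ref{2.5}: since $A$ is a Vasconcelos ring, $\widehat{A}$ is a Vasconcelos ring as well.

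With all three hypotheses verified, the preceding Proposition applies to $\widehat{A}$ and yields that $(\widehat{A})_\p$ is a Vasconcelos ring for every $\p \in \operatorname{Spec}\widehat{A}$, which is the desired conclusion.

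There is essentially no hard step here; the corollary is a formal specialization of the Proposition to the completion. The only point requiring care is the passage to $\widehat{A}$: one must note both that completeness supplies the homomorphic-image-of-Cohen--Macaulay hypothesis automatically (via Cohen's theorem) and that Theorem~\ref{2.5} transfers the Vasconcelos property from $A$ to $\widehat{A}$. These two observations are precisely what allow the Proposition, whose proof rests on Theorem~\ref{1.2-2} and hence needs the Cohen--Macaulay-image assumption, to be brought to bear on $\widehat{A}$.
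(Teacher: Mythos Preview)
Your proof is correct and is exactly the argument the paper intends: the corollary is stated without proof precisely because it is the direct application of the preceding Proposition to $\widehat{A}$, using Cohen's structure theorem for the Cohen--Macaulay-image hypothesis and Theorem~\ref{2.5} to pass the Vasconcelos property to the completion.
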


Suppose that $d > 0$ and let $Q$ be a parameter ideal in $A$. We
denote by $R = \mathcal{R} (Q)= A[Qt]$ (resp. $G = \mathrm{G}(Q))$  the Rees
algebra (resp. the associated graded ring) of $Q$. Let $\mathfrak{M} = \m R + R_+$ be the graded maximal ideal in $R$.

\begin{thm}
Let $A$ be a Noetherian local ring with dimension $d>0$. With the above notation we have the following.
\begin{enumerate}
\item[{\rm (a)}] $A$ is a Vasconcelos ring if and only if  $G_\mathfrak{M}$ is a Vasconcelos ring.
\item[{\rm (b)}] Suppose that $A$ is a homomorphic image of a Cohen--Macaulay ring. If $A$ is a Vasconcelos ring, then $R_\mathfrak{M}$ is a Vasconcelos ring.
\end{enumerate}
\end{thm}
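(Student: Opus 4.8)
The plan is to deduce both parts from the principle, set up in Section 2, that in the dimensions occurring here a local ring is a Vasconcelos ring exactly when the first Hilbert coefficient of a parameter ideal vanishes, together with the structure theorem \ref{1.2-2}. Throughout I would use the routine facts that $\dim \mathrm{G}(Q)_{\mathfrak{M}} = \dim \mathrm{G}(Q) = d$, that $\dim \mathcal{R}(Q)_{\mathfrak{M}} = \dim \mathcal{R}(Q) = d+1$, and that localization at the graded maximal ideal preserves the dimension of a finitely generated graded module (its annihilator being a graded ideal, the relevant quotient is again graded local).

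For part (a) the key observation I would make is that $\mathrm{G}(Q)$ carries a canonical parameter ideal whose $e_1$ equals $e_1(Q)$. Since $\mathrm{G}(Q)$ is standard graded over the Artinian ring $A/Q$, its irrelevant ideal $\mathfrak{n} = \mathrm{G}(Q)_+$ is generated by the $d$ initial forms of generators of $Q$ and is primary to the graded maximal ideal; as $\dim \mathrm{G}(Q)_{\mathfrak{M}} = d$, the ideal $\mathfrak{n}\mathrm{G}(Q)_{\mathfrak{M}}$ is a parameter ideal of $\mathrm{G}(Q)_{\mathfrak{M}}$. Using $\mathfrak{n}^{m+1} = \bigoplus_{n \ge m+1}\mathrm{G}(Q)_n$ I would check that the Hilbert--Samuel function of $(\mathrm{G}(Q)_{\mathfrak{M}}, \mathfrak{n}\mathrm{G}(Q)_{\mathfrak{M}})$ agrees with that of $(A,Q)$, so $e_1(\mathfrak{n}\mathrm{G}(Q)_{\mathfrak{M}}) = e_1(Q)$. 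Then I would finish by applying the equivalence ``Vasconcelos $\iff e_1 = 0$'' on both sides: for $d \ge 2$ this is Theorem~\ref{2.5}(b), and for $d = 1$ it follows from the formula $e_1 = -\ell_A(\H^0_{\m}(-))$ and its independence of the parameter ideal, i.e. Proposition~\ref{Vring}(a). Comparing the two equal coefficients gives $A$ Vasconcelos $\iff e_1(Q)=0 \iff e_1(\mathfrak{n}\mathrm{G}(Q)_{\mathfrak{M}})=0 \iff \mathrm{G}(Q)_{\mathfrak{M}}$ Vasconcelos.

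For part (b) the strategy is structural. Since $A$ is a homomorphic image of a Cohen--Macaulay ring and $e_1(Q)=0$, Theorem~\ref{1.2-2} gives that $C := A/U$ with $U = U_A(0)$ is Cohen--Macaulay of dimension $d$ and $\dim U \le d-2$ (the case $d=1$ is immediate, as there $A$ is Cohen--Macaulay and $\mathcal{R}(Q)\cong A[t]$). As $QC$ is a parameter ideal of a Cohen--Macaulay ring, it is generated by a regular sequence, so I would cite the standard fact that its Rees algebra $\mathcal{R}(QC)$ is Cohen--Macaulay of dimension $d+1$. The surjection $A \to C$ induces a graded exact sequence
\[
0 \lar \mathcal{U} \lar \mathcal{R}(Q) \lar \mathcal{R}(QC) \lar 0, \qquad \mathcal{U} = \textstyle\bigoplus_{n \ge 0}(Q^n \cap U)\,t^n .
\]
Because $\Ann_A(U)$ kills $\mathcal{U}$, this module is finite over the Rees algebra of $Q$ in $A/\Ann_A(U)$, which yields $\dim \mathcal{U} \le \dim U + 1 \le d-1$. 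Localizing the sequence at $\mathfrak{M}$ then exhibits an ideal $\mathcal{U}_{\mathfrak{M}}$ of $\mathcal{R}(Q)_{\mathfrak{M}}$ with Cohen--Macaulay quotient $\mathcal{R}(QC)_{\mathfrak{M}}$ of dimension $d+1 = \dim \mathcal{R}(Q)_{\mathfrak{M}}$ and $\dim \mathcal{U}_{\mathfrak{M}} \le (d+1)-2$; passing to the completion, this verifies condition (d) of Theorem~\ref{2.5} for $\mathcal{R}(Q)_{\mathfrak{M}}$, so $\mathcal{R}(Q)_{\mathfrak{M}}$ is a Vasconcelos ring.

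I expect the main obstacle to be part (b), and within it the dimension count: correctly identifying the kernel $\mathcal{U}$ and controlling its dimension, where the estimate $\dim \mathcal{U} \le \dim U + 1$ is exactly what keeps the extra $+1$ from the Rees variable inside the available slack, sending $\dim U \le d-2$ to $\dim \mathcal{U}_{\mathfrak{M}} \le (d+1)-2$. A related conceptual point, and the reason the hypothesis that $A$ is a homomorphic image of a Cohen--Macaulay ring is genuinely used in (b), is that — unlike $\mathrm{G}(Q)$, whose finite-length graded pieces are unaffected by completion — the Rees algebra does not survive completion unchanged; so I must extract the Cohen--Macaulay quotient $C$ from $A$ itself via Theorem~\ref{1.2-2}, rather than from $\widehat{A}$.
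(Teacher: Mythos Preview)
Your proof is correct and follows essentially the same route as the paper: in (a) you compare $e_1(Q)$ with $e_1$ of the irrelevant ideal of $\mathrm{G}(Q)$ exactly as the paper does, and in (b) you use Theorem~\ref{1.2-2} to extract the Cohen--Macaulay quotient $C=A/U$, pass to the Cohen--Macaulay Rees algebra $\mathcal{R}(QC)$, bound the kernel's dimension by $d-1$, and conclude via the characterization of Vasconcelos rings. The only difference is a minor technical simplification in (b): you bound $\dim \mathcal{U}$ by observing that $\Ann_A(U)$ kills $\mathcal{U}$ and invoking $\dim \mathcal{R}_{A/\Ann U}(Q) \le \dim U + 1$, whereas the paper reaches the same bound by analyzing the associated primes of the kernel individually.
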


\begin{proof}
(a) Let $Q = (a_1, a_2, \cdots, a_d)$ be a parameter ideal in $A$, and let $f_i = a_i t$ for each $1 \le i \le d$. Then $G_+ = (f_1, f_2, \cdots, f_d)G$ and $f_1, f_2, \cdots, f_d$ forms a linear system of parameters in the graded ring $G$. We furthermore have $$\ell_G(G/[(f_1, f_2, \cdots, f_d)G]^{n+1}) = \ell_A(A/Q^{n+1})$$ for all $n \ge 0$. Hence $e_1(Q) = e_1((f_1, f_2, \cdots, f_d)G_\mathfrak{M})$ and the conclusion follows from Theorem \ref{2.5}.

(b) We may assume that $A$ is not a Cohen--Macaulay ring. Let $U =
U_A(0)$ and $B = A/U$. Then by Theorem \ref{1.2-2} $B$ is a Cohen--Macaulay ring and $\operatorname{dim}U \le d-2$. Consider the canonical epimorphism $\varphi : R \to
\mathcal{R}(QB)$ and let $K = \Ker~\varphi$. Then $K \subseteq
U A[t]$.
Let $\a = (0) : U$.  Let $P \in \operatorname{Ass}_R(K)$ and let $\p = P \cap A$. Then $\a \subseteq \p$, since $\a K = (0)$. Notice that $P$ is the kernel of the canonical epimorphism $\phi : R \to \mathcal{R}([Q + \p]/\p)$.
If $Q \subseteq \p$, then $\p = \m$ and $\operatorname{dim} R/P = \operatorname{dim} \mathcal{R}([Q+\p]/\p) = 0$. If $Q \not\subseteq \p$ we have $\operatorname{dim} R/P = \operatorname{dim} A/\p + 1 \le \operatorname{dim} A/\a + 1 \le d-1$. Therefore $\operatorname{dim} R/P \le d-1$ for every $P \in \operatorname{Ass}_R(K)$, which  shows that $\operatorname{dim}_RK \le d-1 = (d+1) - 2$. Since $\mathcal{R}(QB)$ is a Cohen--Macaulay ring, we have that $R_\mathfrak{M}$ is a Vasconcelos ring by Theorem \ref{1.2-2}.
\end{proof}

We close this section with an application to sequentially Cohen--Macaulay rings. We refer the reader to \cite{GHS} for the definition and details. See also \cite{CC}, \cite{Sch2}, \cite{St}. We use the following characterization.

\begin{prop}[\cite{Sch2}]\label{seqCM2}
Let $A$ be a Noetherian local ring.
Then M is a sequentially Cohen--Macaulay $A$-module if and only if
$M$ admits a Cohen--Macaulay filtration, that is,
a family $\mathcal{M}=\{M_i\}_{0\leq i\leq t}$ $(t>0)$
of   $A$-submodules  of $M$ with
$$M_0=(0)\subsetneq M_1 \subsetneq M_2 \subsetneq \cdots
\subsetneq M_t=M$$ such that
\begin{enumerate}
\item[$(\mathrm{i})$] $\dim_AM_{i-1}<\dim_AM_i$ and
\item[$(\mathrm{ii})$] $M_i/M_{i-1}$ is a Cohen--Macaulay $A$-module
for all $1\leq i\leq t$.
\end{enumerate}
\end{prop}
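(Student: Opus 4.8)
The plan is to reconcile the stated characterization with the definition of sequential Cohen--Macaulayness through the \emph{dimension filtration}. Recall (from \cite{GHS}) that the dimension filtration of $M$ is the family $\{D_i(M)\}_{i \ge 0}$, where $D_i(M)$ is the largest submodule of $M$ of dimension at most $i$, and that $M$ is sequentially Cohen--Macaulay precisely when every nonzero quotient $D_i(M)/D_{i-1}(M)$ is a Cohen--Macaulay module (necessarily of dimension $i$). The entire argument rests on one structural fact: a Cohen--Macaulay module $N$ is unmixed, i.e. $\dim A/\p = \dim N$ for every $\p \in \Ass(N)$.

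For the direction in which $M$ is assumed sequentially Cohen--Macaulay, I would simply discard the repetitions in the dimension filtration, keeping only those indices $i$ with $D_i(M) \neq D_{i-1}(M)$. This produces a strictly increasing filtration $(0) = M_0 \subsetneq M_1 \subsetneq \cdots \subsetneq M_t = M$ whose successive quotients are exactly the nonzero $D_i(M)/D_{i-1}(M)$; by hypothesis these are Cohen--Macaulay and their dimensions strictly increase, so this is a Cohen--Macaulay filtration. This direction is immediate once the definition is unwound.

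For the converse, the core is to show that \emph{any} Cohen--Macaulay filtration $\{M_i\}_{0 \le i \le t}$ must coincide term by term with the dimension filtration, namely $M_i = D_{d_i}(M)$ where $d_i := \dim M_i$. First I would note that the induced filtration $(0) = M_i/M_i \subsetneq M_{i+1}/M_i \subsetneq \cdots \subsetneq M_t/M_i = M/M_i$ has quotients $M_j/M_{j-1}$ $(j>i)$ that are Cohen--Macaulay of dimension $d_j > d_i$; since $\Ass(M/M_i) \subseteq \bigcup_{j>i}\Ass(M_j/M_{j-1})$ and each $\Ass(M_j/M_{j-1})$ consists, by unmixedness, only of primes $\p$ with $\dim A/\p = d_j$, every associated prime of $M/M_i$ has dimension $> d_i$. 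Hence $M/M_i$ has no nonzero submodule of dimension $\le d_i$. Then for any submodule $L \subseteq M$ with $\dim L \le d_i$, its image in $M/M_i$ has dimension $\le d_i$ and must vanish, forcing $L \subseteq M_i$; combined with the trivial inclusion $M_i \subseteq D_{d_i}(M)$ this gives $M_i = D_{d_i}(M)$. The identical argument applied to $M/M_{i-1}$ yields $D_{d_i - 1}(M) = M_{i-1}$, so the dimension filtration jumps exactly at the values $d_0 < d_1 < \cdots < d_t$, and there $D_{d_i}(M)/D_{d_i-1}(M) = M_i/M_{i-1}$ is Cohen--Macaulay by hypothesis. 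Thus the dimension filtration has Cohen--Macaulay quotients and $M$ is sequentially Cohen--Macaulay.

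I expect the main obstacle to be the bookkeeping in this last identification: controlling $\Ass(M/M_i)$ and verifying that the dimension filtration jumps land precisely at $d_0 < \cdots < d_t$ rather than at intermediate values. Everything reduces to the rigidity provided by unmixedness of Cohen--Macaulay modules, which is exactly what forces uniqueness of a Cohen--Macaulay filtration and therefore makes the two formulations equivalent.
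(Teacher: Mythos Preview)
The paper does not give its own proof of this proposition: it is quoted as a known characterization from \cite{Sch2} and used as a black box. Your argument is correct and is essentially the standard one found in the literature on dimension filtrations (e.g.\ Schenzel): extract the strictly increasing part of the dimension filtration for one direction, and for the converse show that any Cohen--Macaulay filtration is forced to agree with the dimension filtration via the unmixedness of the Cohen--Macaulay quotients and the containment $\Ass(M/M_i)\subseteq\bigcup_{j>i}\Ass(M_j/M_{j-1})$.

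One small point of bookkeeping you flagged yourself: to conclude that the dimension filtration is constant on each interval $[d_{i-1},d_i-1]$, it is cleanest to observe that every associated prime of $M/M_{i-1}$ has $\dim A/\p \ge d_i$ (not merely $> d_{i-1}$), so any $L\subseteq M$ with $\dim L\le d_i-1$ already maps to zero in $M/M_{i-1}$; this gives $D_j(M)=M_{i-1}$ for all $d_{i-1}\le j<d_i$ in one stroke. Your sketch says exactly this, so the argument is complete.
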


The result below follows from Proposition~\ref{1.2-1}.

\begin{cor}\label{seqCM}
Let $A$ be a Noetherian local ring with dimension $d>0$. Suppose that $A$ is a sequentially Cohen--Macaulay ring. If $\operatorname{dim} A/\p \ne d-1$ for every $\p \in \operatorname{Ass} (A)$,
then $A$ is a Vasconcelos ring.
\end{cor}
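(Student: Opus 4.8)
The plan is to reduce the statement to the hypotheses of Proposition~\ref{1.2-1}: I will show that $U := U_A(0)$ satisfies that $A/U$ is Cohen--Macaulay and $\dim U \le d-2$. Once this is done, Proposition~\ref{1.2-1} yields $e_1(Q)=0$ for every parameter ideal $Q$, and hence $A$ is a Vasconcelos ring by definition.

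First I would use the characterization in Proposition~\ref{seqCM2} to fix a Cohen--Macaulay filtration $0 = M_0 \subsetneq M_1 \subsetneq \cdots \subsetneq M_t = A$ with $\dim M_{i-1} < \dim M_i$ and each $M_i/M_{i-1}$ Cohen--Macaulay. If $t=1$ then $A$ itself is Cohen--Macaulay and there is nothing to prove, so I assume $t \ge 2$ and set $L = M_{t-1}$. The top quotient $A/L = M_t/M_{t-1}$ is Cohen--Macaulay, and since $\dim M_{t-1} < \dim M_t = d$ it has dimension exactly $d$; in particular it is unmixed, so every $\p \in \Ass(A/L)$ satisfies $\dim A/\p = d$.

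The central step is to identify $L$ with $U_A(0)$ via Lemma~\ref{U(M)} applied to $M = A$. I have $\dim L \le d-1$, so it remains to check $\Ass(A/L) \subseteq \Assh(A)$. Given $\p \in \Ass(A/L)$ I know $\dim A/\p = d$, hence $\p \notin \Supp(L)$ because $\dim L < d$; localizing the sequence $0 \to L \to A \to A/L \to 0$ at $\p$ gives $A_\p \cong (A/L)_\p$, so $\p \in \Ass(A)$ and therefore $\p \in \Assh(A)$. Lemma~\ref{U(M)} then gives $L = U_A(0) =: U$, whence $A/U$ is Cohen--Macaulay.

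Finally I would invoke the hypothesis $\dim A/\p \ne d-1$ for all $\p \in \Ass(A)$ to force $\dim U \le d-2$. If instead $\dim U = d-1$, then $\Assh(U) \ne \emptyset$ supplies a prime $\q \in \Ass(U)$ with $\dim A/\q = d-1$; but $\Ass(U) \subseteq \Ass(A)$ since $U \subseteq A$, contradicting the hypothesis. Since $U \ne 0$ (as $t \ge 2$) and $\dim U \le d-2$, we also get $d \ge 2$, so Proposition~\ref{1.2-1} applies and completes the proof. I do not expect a genuine obstacle, as the argument is essentially bookkeeping with associated primes; the one point that needs care is the verification of both hypotheses of Lemma~\ref{U(M)} for the top piece of the filtration, namely that the unmixedness of the Cohen--Macaulay quotient $A/L$ together with $\dim L < d$ really does place $\Ass(A/L)$ inside $\Assh(A)$.
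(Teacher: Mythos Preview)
Your proof is correct and follows essentially the same route as the paper: both reduce to Proposition~\ref{1.2-1} by identifying the penultimate term $M_{t-1}$ of the Cohen--Macaulay filtration with $U_A(0)$ and then using the hypothesis on associated primes to force $\dim U \le d-2$. The only difference is that the paper obtains $M_{t-1}=U_A(0)$ by citing \cite[Theorem~2.3]{GHS}, whereas you prove this identification directly from Lemma~\ref{U(M)}; your argument is therefore more self-contained but otherwise identical in strategy.
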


\begin{proof} We may assume that $A$ is not a Cohen--Macaulay ring. Let $U=U_A(0)$. In the notation of Proposition~\ref{seqCM2}, we have that $t\geq 2$ and $M_{t-1}=U$ (\cite[Theorem 2.3]{GHS}). Therefore $\dim U \le d-2$, since $\operatorname{dim} A/\p \ne d-1$ for all $\p \in \operatorname{Ass} (A)$. Thus $A$ is a Vasconcelos ring by Proposition \ref{1.2-1}, because the ring $A/U$ is Cohen--Macaulay.
\end{proof}

Last we show that the
converse of Corollary \ref{seqCM} holds true, when $\operatorname{dim} A = 3$ and $A$ is a homomorphic image of a Cohen--Macaulay ring.

\begin{prop}
Let $A$ be a Noetherian local ring of dimension $3$, which is a homomorphic image of a Cohen--Macaulay ring. If $A$ is a Vasconcelos ring, then $A$ is a sequentially Cohen--Macaulay ring with $\operatorname{dim} A/\p \ne 2$ for every $\p \in \operatorname{Ass} (A)$.
\end{prop}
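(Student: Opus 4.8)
The plan is to feed the Vasconcelos hypothesis into Theorem~\ref{1.2-2} to pin down the structure of $A$, and then exploit the fact that the dimension restriction $d=3$ is exactly what forces the non--Cohen--Macaulay part of $A$ to live in dimension $\le 1$, where a module is automatically sequentially Cohen--Macaulay. Concretely, I may assume $A$ is not Cohen--Macaulay, since otherwise both conclusions are immediate (a Cohen--Macaulay ring is trivially sequentially Cohen--Macaulay and unmixed, so $\operatorname{Ass}(A)=\operatorname{Assh}(A)$ with all $\dim A/\p = 3$). Set $U=U_A(0)$ and $C=A/U$. Because $A$ is a Vasconcelos ring and a homomorphic image of a Cohen--Macaulay ring, Theorem~\ref{1.2-2} gives that $C$ is Cohen--Macaulay of dimension $3$ and $\dim U\le d-2=1$; in particular $U\neq(0)$.

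Next I would read off the assertion about associated primes directly from the short exact sequence $0\to U\to A\to C\to 0$, which yields $\operatorname{Ass}(A)\subseteq\operatorname{Ass}(U)\cup\operatorname{Ass}(C)$. Every $\P\in\operatorname{Ass}(C)$ satisfies $\dim C/\P=3$ since $C$ is Cohen--Macaulay, hence unmixed; and every $\p\in\operatorname{Ass}(U)\subseteq\operatorname{Supp}(U)$ satisfies $\dim A/\p\le\dim U\le 1$. Therefore each $\p\in\operatorname{Ass}(A)$ has $\dim A/\p\in\{3\}\cup\{0,1\}$, so $\dim A/\p\neq 2$, which is the second assertion.

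For the sequentially Cohen--Macaulay conclusion I would exhibit an explicit Cohen--Macaulay filtration in the sense of Proposition~\ref{seqCM2}. Put $W=\H^0_{\m}(A)$; since every finite--length submodule of $A$ has dimension $0<3$ it lies in $U$, whence $W=\H^0_{\m}(U)$. I claim the chain
\[
(0)\subseteq W\subseteq U\subseteq A,
\]
after deleting any equal consecutive terms, is a Cohen--Macaulay filtration. Indeed $A/U=C$ is Cohen--Macaulay of dimension $3$; the quotient $U/W=U/\H^0_{\m}(U)$ has no nonzero finite--length submodule, so $\depth(U/W)\ge 1$, while $\dim(U/W)=\dim U\le 1$, forcing $U/W$ to be Cohen--Macaulay of dimension $1$ (or to vanish); and $W$, when nonzero, is finite length, hence Cohen--Macaulay of dimension $0$. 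The successive dimensions $0<1<3$ are strictly increasing, so Proposition~\ref{seqCM2} shows that $A$ is sequentially Cohen--Macaulay.

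The only point requiring care—and the reason the statement is confined to dimension $3$—is the inequality $\dim U\le 1$: it is precisely this that makes the low--dimensional piece $U$ automatically sequentially Cohen--Macaulay, because any module of dimension $\le 1$ becomes Cohen--Macaulay of positive depth after removing its maximal finite--length submodule. In dimension $d>3$ one would only obtain $\dim U\le d-2$, which can be $\ge 2$, and a module of dimension $\ge 2$ need not be sequentially Cohen--Macaulay; so the direct argument is genuinely special to $d=3$. The remaining verifications—that $W=\H^0_{\m}(U)$ and that $U/W$ has positive depth—are routine.
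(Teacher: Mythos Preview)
Your proof is correct and follows essentially the same route as the paper: apply Theorem~\ref{1.2-2} to get $A/U$ Cohen--Macaulay with $\dim U\le 1$, set $W=\H^0_\m(A)=\H^0_\m(U)$, and build the Cohen--Macaulay filtration $(0)\subseteq W\subseteq U\subseteq A$. The paper does an explicit three-case split (on $\dim U$ and $\depth A$) where you instead say ``delete equal consecutive terms,'' and the paper justifies $W=\H^0_\m(U)$ via $\depth(A/U)>0$ rather than your finite-length argument, but these are cosmetic differences.
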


\begin{proof}
Let $U = U_A(0)$. We may assume that $U \ne (0)$. Then $A/U$ is a Cohen--Macaulay ring and $\dim U \le 1$ by Theorem \ref{1.2-2}. Hence  $\operatorname{dim} A/\p \ne 2$ for all  $\p \in \operatorname{Ass} (A)$. Let $W = \H_\m^0(A)$. Then $W \subseteq U$ and $W = \H_\m^0(U)$, because $\operatorname{depth} (A/U) >0 $. Therefore, if $\dim U = 1$ and $\operatorname{depth}(A) = 1$, then $(0) \subsetneq U \subsetneq A$ is a Cohen--Macaulay filtration of $A$. If $\dim U = 1$ but $\operatorname{depth} (A) = 0$, then $(0) \subsetneq W \subsetneq U \subsetneq A$ is a Cohen--Macaulay filtration of $A$. If $\dim U = 0$, then $(0) \subsetneq U \subsetneq A$ is a Cohen--Macaulay filtration of $A$. Thus $A$ is a sequentially Cohen--Macaulay ring by Proposition \ref{seqCM2}.
\end{proof}

\begin{cor}
Suppose that $A$ is a Vasconcelos ring of dimension $3$. Then the completion $\widehat{A}$ of $A$
  is a sequentially Cohen--Macaulay ring.
\end{cor}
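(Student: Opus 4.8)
The plan is to deduce this from the immediately preceding Proposition, which already handles the case of a $3$--dimensional Vasconcelos ring that happens to be a homomorphic image of a Cohen--Macaulay ring. The strategy is therefore to transfer the hypotheses from $A$ to its completion $\widehat{A}$, for which the extra hypothesis (being a homomorphic image of a Cohen--Macaulay ring) comes for free, and then to invoke that Proposition with $\widehat{A}$ in place of $A$.

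First I would record that $\dim \widehat{A} = \dim A = 3$, since completion preserves dimension for a Noetherian local ring. Next, I would observe that $\widehat{A}$ is itself a Vasconcelos ring: this is precisely the content of the final assertion of Theorem~\ref{2.5}, which states that when $A$ is a Vasconcelos ring of dimension $d \ge 2$, the completion $\widehat{A}$ is again a Vasconcelos ring. (Equivalently, conditions (a) and (c) of Theorem~\ref{2.5} are stated directly in terms of $\widehat{A}/U_{\widehat{A}}(0)$, so the Vasconcelos property passes to $\widehat{A}$ automatically.) Finally, since $\widehat{A}$ is a complete Noetherian local ring, the Cohen structure theorem shows that $\widehat{A}$ is a homomorphic image of a regular local ring, hence of a Cohen--Macaulay ring.

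With these three facts in hand, $\widehat{A}$ satisfies all the hypotheses of the preceding Proposition: it is a $3$--dimensional Vasconcelos ring which is a homomorphic image of a Cohen--Macaulay ring. Applying that Proposition to $\widehat{A}$ yields that $\widehat{A}$ is sequentially Cohen--Macaulay (and in fact $\dim \widehat{A}/\p \ne 2$ for every $\p \in \Ass(\widehat{A})$), which is exactly the desired conclusion.

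I do not expect a genuine obstacle here, since the corollary is a formal consequence of the preceding Proposition once the hypotheses are lifted to the completion. The only point requiring care is justifying that the Vasconcelos property ascends to $\widehat{A}$ — but this is supplied directly by Theorem~\ref{2.5} — together with the standard observation that completeness automatically provides the ``homomorphic image of a Cohen--Macaulay ring'' hypothesis that the ring $A$ itself need not satisfy.
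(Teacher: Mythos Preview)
Your proposal is correct and is exactly the argument the paper intends: the corollary is stated without proof precisely because it follows by applying the preceding Proposition to $\widehat{A}$, using that $\widehat{A}$ is a Vasconcelos ring (Theorem~\ref{2.5}) and that a complete local ring is a homomorphic image of a Cohen--Macaulay ring.
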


\section{Rings with $e_1(Q)$ constant}

In this section we study the
problem of when $e_1(Q)$ is independent of the choice of parameter
ideals $Q$ in $A$. Part of the motivation comes from the fact that
Buchsbaum rings have this property. We establish here that when
$e_1(Q)=-1$ or $e_1(Q)=-2$ and $A$ is unmixed, then $A$ is indeed
Buchsbaum. (The question of the variability of $e_1(Q)$ will be
considered in another paper.)

Let $(A,\m)$ be a Noetherian local ring with maximal ideal $\m$ and $d=\operatorname{dim}A > 0$. Assume that $A$ is a homomorphic image of a Gorenstein ring. Then $A$ contains a system of parameters $x_1, x_2, \cdots, x_d$ which forms a strong $d$-sequence in $A$, that is, the sequence $x_1^{n_1}, x_2^{n_2}, \cdots, x_d^{n_d}$ is a $d$-sequence in $A$ for all integers $n_1, n_2, \cdots, n_d \ge 1$ (see \cite[Theorem 2.6]{Cu} or \cite[Theorem 4.2]{Kw} for the existence of such systems of parameters). For each integer $q \ge 1$
let $\Lambda_q(A)$ be the set of values $e_1(Q)$, where $Q$ runs over the parameter ideals of $A$ such that $Q \subseteq \m^q$ and $Q=(a_1, a_2, \cdots, a_d)$ with $a_1, a_2, \cdots, a_d$ a $d$-sequence. We then have $\Lambda_q(A) \ne \emptyset$, $\Lambda_{q+1}(A) \subseteq \Lambda_q(A)$ for all $q \ge 1$, and $\alpha \le 0$ for every $\alpha \in \Lambda_q(A)$ (Corollary \ref{cor} (a)).

\begin{lem}\label{key}
Let $(A,\m)$ be a Noetherian local ring of dimension $d\ge 2$, which is a homomorphic image of a Gorenstein ring. Assume that $\Lambda_q(A)$ is a finite set for some integer $q \ge 1$ and let $\ell = -\operatorname{min}\Lambda_q(A)$. Suppose that $\operatorname{Ass} (A)= \operatorname{Assh} (A)$. Then $\m^{\ell}\H_\m^i(A) = (0)$ for all $i \ne d$, whence all the local cohomology modules $\{\H_\m^i(A)\}_{0 \le i < d}$ of $A$ are finitely generated.
\end{lem}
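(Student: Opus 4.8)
The plan is to argue by induction on $d$ after standard reductions. Replacing $A$ by $A[X]_{\m A[X]}$ we may assume $A/\m$ is infinite, and since $A$ is a homomorphic image of a Gorenstein ring it has a canonical module, so Proposition~\ref{GNa} applies. The hypothesis $\Ass(A) = \Assh(A)$ gives $\m \notin \Ass(A)$, hence $\depth A \ge 1$ and $\H_{\m}^{0}(A) = (0)$; also $\H_{\m}^{i}(A) = (0)$ for $i > d$. The final clause is then automatic once the annihilation is known: a local cohomology module is Artinian, and an Artinian module annihilated by $\m^{\ell}$ has finite length over $A/\m^{\ell}$, hence is finitely generated. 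Thus it suffices to prove $\m^{\ell}\H_{\m}^{i}(A) = (0)$ for $1 \le i \le d-1$.

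For $d = 2$, Proposition~\ref{GNa}(a) shows $\H_{\m}^{1}(A)$ is finitely generated, so of finite length; fix $M$ with $\m^{M}\H_{\m}^{1}(A) = (0)$. Pick a nonzerodivisor $a \in \m^{N}$, $N = \max(M,q)$, that is the first term of a parameter $d$-sequence contained in $\m^{q}$ (the first term of a $d$-sequence in the unmixed ring $A$ is a parameter avoiding the top-dimensional associated primes, hence a nonzerodivisor). With $S = A/aA$ of dimension $1$, the one-dimensional formula $e_1(QS) = -\ell_A(\H_{\m}^{0}(S))$ of \cite{GNi} together with the reduction formula relating $e_1(Q)$ and $e_1(QS)$ (whose correction term vanishes since $a$ is a nonzerodivisor) gives $-e_1(Q) = \ell_A(\H_{\m}^{0}(A/aA)) = \ell_A(0 :_{\H_{\m}^{1}(A)} a)$; as $a$ kills $\H_{\m}^{1}(A)$ this equals $\ell_A(\H_{\m}^{1}(A))$, and $e_1(Q) \in \Lambda_q(A)$ forces $\ell_A(\H_{\m}^{1}(A)) = -e_1(Q) \le \ell$. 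A module of length at most $\ell$ is killed by $\m^{\ell}$, so $\m^{\ell}\H_{\m}^{1}(A) = (0)$.

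Now let $d \ge 3$. For each large $N$, use Proposition~\ref{GNa}(c) (with $I = \m^{N}$) to choose a superficial nonzerodivisor $a \in \m^{N}$ satisfying $\Ass(A/aA) \subseteq \Assh(A/aA) \cup \{\m\}$ and sitting in a parameter $d$-sequence inside $\m^{q}$. Put $S = A/aA$ and $T = S/\H_{\m}^{0}(S)$; by the choice of $a$ (and Remark~\ref{U(A)}) $T$ is unmixed of dimension $d-1$, is a homomorphic image of a Gorenstein ring, and $\H_{\m}^{j}(S) \cong \H_{\m}^{j}(T)$ for $j \ge 1$. Lifting a parameter $d$-sequence of $T$ back to $A$ by prepending $a$, and using the $d$-sequence reduction formulas to see that $e_1$ is unchanged, yields $\Lambda_q(T) \subseteq \Lambda_q(A)$, hence $-\min\Lambda_q(T) \le \ell$; the induction hypothesis then gives $\m^{\ell}\H_{\m}^{j}(T) = (0)$ for $1 \le j \le d-2$. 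Feeding this into the exact sequence of $0 \to A \xrightarrow{a} A \to S \to 0$, namely $0 \to \H_{\m}^{i-1}(A)/a\H_{\m}^{i-1}(A) \to \H_{\m}^{i-1}(S) \to (0 :_{\H_{\m}^{i}(A)} a) \to 0$, handles the two ranges of $i$ differently. For $2 \le i \le d-1$ the module $0 :_{\H_{\m}^{i}(A)} a$ is a quotient of $\H_{\m}^{i-1}(S) \cong \H_{\m}^{i-1}(T)$, so $\m^{\ell}(0 :_{\H_{\m}^{i}(A)} a) = (0)$; as $a$ runs through such choices in all $\m^{N}$, every element of the $\m$-power torsion module $\H_{\m}^{i}(A)$ lies in some $0 :_{\H_{\m}^{i}(A)} a$, so $\H_{\m}^{i}(A) = \bigcup_a (0 :_{\H_{\m}^{i}(A)} a)$ and $\m^{\ell}\H_{\m}^{i}(A) = (0)$. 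For $i = 1$, Proposition~\ref{GNa}(a) again makes $\H_{\m}^{1}(A)$ finitely generated; choosing $N$ so large that $a$ annihilates it, the cokernel piece of the same sequence gives $\H_{\m}^{1}(A) = \H_{\m}^{1}(A)/a\H_{\m}^{1}(A) \hookrightarrow \H_{\m}^{1}(S) \cong \H_{\m}^{1}(T)$, which is killed by $\m^{\ell}$.

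The crux --- and the step I expect to be most delicate --- is keeping the annihilating exponent equal to the single constant $\ell = -\min\Lambda_q(A)$ throughout the induction, rather than letting it grow at each reduction. This hinges on two choices being made compatibly: that a single $a$ can be taken to be a nonzerodivisor in an arbitrarily high power of $\m$, to belong to a parameter $d$-sequence lying in $\m^{q}$, and to satisfy the associated-prime condition of Proposition~\ref{GNa}(c); and that a parameter $d$-sequence of $T$ lifts, after prepending $a$, to a parameter $d$-sequence of $A$ inside $\m^{q}$ with the same first Hilbert coefficient. It is precisely this lifting that produces the inclusion $\Lambda_q(T) \subseteq \Lambda_q(A)$ and thereby lets the inductive constant remain $\ell$.
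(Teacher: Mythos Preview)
Your overall strategy matches the paper's: induct on $d$, pass to $S=A/aA$ for a carefully chosen regular element $a$, apply the induction hypothesis to the unmixed quotient $T=S/\H_\m^0(S)$, and pull the annihilation back through the long exact sequence. The base case and the endgame with the exact sequence are fine.

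The gap is precisely where you flag the ``crux'': the inclusion $\Lambda_q(T) \subseteq \Lambda_q(A)$ is not justified as stated. A $d$-sequence $y_2,\ldots,y_d$ in $T = S/\H_\m^0(S)$ need not lift to a $d$-sequence in $S$; the obstruction is that $(y_2,\ldots,y_d)S$ may meet $\H_\m^0(S)$ nontrivially, in which case the colon conditions can fail in $S$ even though they hold modulo $\H_\m^0(S)$. Only once the sequence is a $d$-sequence in $S$ does prepending the $A$-regular element $a$ yield a $d$-sequence in $A$. (Your side condition that $a$ ``sit in a parameter $d$-sequence inside $\m^q$'' is not what is needed here and does not help.)

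The paper's remedy is to enlarge $q$: since $\H_\m^0(S)$ has finite length, there exists $q' \ge q$ with $\n^{q'} \cap \H_\m^0(S) = (0)$ in $S$. For $y_2,\ldots,y_d \in \m^{q'}$ one then has $(y_2,\ldots,y_d)S \cap \H_\m^0(S) = (0)$, so a $d$-sequence in $T$ with entries in $\m^{q'}$ is automatically a $d$-sequence in $S$, and hence $a,y_2,\ldots,y_d$ is a $d$-sequence in $A$ lying in $\m^q$. This gives $\Lambda_{q'}(T) \subseteq \Lambda_q(A)$, so $\Lambda_{q'}(T)$ is finite with $-\min\Lambda_{q'}(T) \le \ell$, and the induction hypothesis applies to $T$ with $q'$ in place of $q$. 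Since the resulting bound is still $\ell$ (independent of $a$ and of $q'$), your union argument over varying $a\in\m^N$ --- or, as the paper prefers, over powers $x^n$ of a single $x$ chosen via Proposition~\ref{GNa}(b) --- then goes through unchanged.
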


\begin{proof}
Let $C = \H_\m^1(A)$. Then $C$ is a finitely generated $A$-module (Proposition \ref{GNa} (a)). Suppose that $d = 2$ and let $\ell' = \ell_A(C)$. Let $a, b$ be a system of parameters of $A$ and assume that $a,b$ is a $d$-sequence in $A$. Then the element $a$ is superficial for  the ideal $Q = (a,b)$, so that $e_1(Q) = e_1(Q/(a)) = -\ell_A((0):_C a )$. Therefore, choosing $a, b \in \m^q$ with $aC = (0)$, we get $-\ell' =e_1(Q) \in \Lambda_q(A)$, whence $\ell' \le \ell$. Thus $\m^{\ell}C = (0)$, because $\m^{\ell'}C = (0)$.

Suppose now that $d \ge 3$ and that our assertion holds true for $d-1$. Let $$\mathcal{F}' = \{\p \in \operatorname{Spec} A \mid \p \ne \m, \operatorname{dim}A_\p > \operatorname{depth}(A_\p) = 1\}.$$ Then $\mathcal{F}'$ is a finite set (Proposition \ref{GNa} (b)). We choose $x \in \m$ so that $$x \not\in \bigcup_{\p \in \operatorname{Ass}(A)}\p \cup \bigcup_{\p \in \mathcal{F}'} \p.$$ Let $n \ge q$ be an integer such that $x^n\H_\m^1(A) = (0)$ and put $y = x^n$. Let $B =A/yA$. Then $\operatorname{dim} B = d-1$ and $\operatorname{Ass}_A  (B) \setminus \{\m \} = \operatorname{Assh}_A(B)$. It follows that $U_B(0) = \H_\m^0(B)$ (see Remark~\ref{U(A)}). Let $\widetilde{B}=B/\H_\m^0(B)$.

Let $q' \ge q$ be an integer such that ${\n}^{q'} \cap \H_\m^0(B) = (0)$, where $\n$ denotes the maximal ideal of $B$. Let $y_2, y_3, \cdots, y_d \in \m^{q'}$ be a system of parameters for the $A$-module $\widetilde{B}$ and assume that $y_2, y_3, \cdots, y_d$ is a $d$-sequence in $\widetilde{B}$. Since $(y_2, y_3, \cdots, y_d)B \cap \H_\m^0(B) = (0)$, we have that $y_2, y_3, \cdots, y_d$ forms a $d$-sequence in $B$ also. Then, because $y$ is $A$-regular, the sequence $y_1=y, y_2, \cdots, y_d$ forms a $d$-sequence in $A$, whence $y_1$ is superficial for the parameter ideal $Q = (y_1, y_2, \cdots, y_d)$ of $A$. Consequently
$$e_1((y_2, y_3, \cdots, y_d)\widetilde{B})=e_1((y_2, y_3, \cdots, y_d)B) = e_1(Q) \in \Lambda_q(A),$$ so that $\Lambda_{q'}(\widetilde{B}) \subseteq  \Lambda_q(A).$
Therefore $\Lambda_{q'}(\widetilde{B})$ is a finite set, whence the hypothesis of induction on $d$ yields that $\m^{\ell''}\H_\m^i(\widetilde{B}) = (0)$ for all $i \ne d-1$, where $\ell'' = -\operatorname{min} \Lambda_{q'}(\widetilde{B})$.
Hence $\m^{\ell}\H_\m^i(\widetilde{B}) = (0)$ for all $i \ne d-1$, because $\ell'' \le \ell$.

Now consider the exact sequence
$$\cdots \to \H_\m^1(A) \overset{x^n}{\to} \H_\m^1(A) \to  \H_\m^1(B) \to \cdots \to \H_\m^i(B) \to \H_\m^{i+1}(A) \overset{x^n}{\to} \H_\m^{i+1}(A) \to \cdots $$
of local cohomology modules. We then have
$$
\m^{\ell}\left[(0):_{\H_\m^{i+1}(A)}x^n\right] = (0)
$$
for all integers $1 \le i \le d-2$ and $n \ge q$, because $\m^{\ell}\H_\m^i(B) = (0)$ for all $1 \le i \le d - 2$.  Hence $\m^{\ell}\H_\m^{i+1}(A)= (0)$, because $$\H_\m^{i+1}(A) = \bigcup_{n \ge 1}\left[(0):_{\H_\m^{i+1}(A)}\m^n\right].$$
On the other hand we  have the embedding $\H_\m^1(A) \subseteq  \H_\m^1(B)$, since $x^n\H_\m^1(A) = (0)$. Thus
$\m^{\ell}\H_\m^{1}(A) = (0)$, which completes the proof of the lemma.
\end{proof}

Now let $A$ be a Noetherian local ring with maximal ideal $\m$ and $d = \operatorname{dim}A > 0$. Let $\Lambda (A) = \{e_1(Q) \mid Q ~\operatorname{is ~a ~parameter ~ideal ~in} A \}$. Passing to the completion $\widehat{A}$ of $A$ and applying Lemma~\ref{key} we obtain the following.

\begin{prop}\label{flc}
Let $(A,\m)$ be an unmixed Noetherian local ring of dimension $d\ge 2$. Assume that $\Lambda (A)$ is a finite set and put $\ell = -\operatorname{min} \Lambda (A)$. Then $\m^{\ell}\H_\m^i(A) = (0)$ for every $ i \ne d$, whence $\H_\m^i(A)$ is a finitely generated $A$-module for every $i \ne d$.
\end{prop}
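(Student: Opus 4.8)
The plan is to reduce to the complete ring $\widehat{A}$, apply Lemma~\ref{key} there, and transport the conclusion back to $A$. First I would collect the data needed to invoke Lemma~\ref{key} for $\widehat{A}$. Local cohomology commutes with completion, so $\H_\m^i(A) \cong \H_{\widehat{\m}}^i(\widehat{A})$ as $\widehat{A}$-modules; since these modules are $\widehat{\m}$-torsion, $\m$ and $\widehat{\m}$ act on them in the same way, and hence $\m^\ell\H_\m^i(A)=(0)$ is equivalent to $\widehat{\m}^{\ell}\H_{\widehat{\m}}^i(\widehat{A})=(0)$, while $\H_\m^i(A)$ is a finitely generated $A$-module precisely when $\H_{\widehat{\m}}^i(\widehat{A})$ is a finitely generated $\widehat{A}$-module. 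By Cohen's structure theorem $\widehat{A}$ is a homomorphic image of a complete regular (hence Gorenstein) local ring, it has dimension $d\ge 2$, and since $A$ is unmixed we have $\Ass(\widehat{A})=\Assh(\widehat{A})$. Thus the remaining hypotheses of Lemma~\ref{key} to be checked for $\widehat{A}$ are that $\Lambda_q(\widehat{A})$ is finite for some $q\ge 1$ and that $-\min\Lambda_q(\widehat{A})\le \ell$.

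The central point, which I expect to be the main obstacle, is the invariance $\Lambda(\widehat{A})=\Lambda(A)$. The inclusion $\Lambda(A)\subseteq\Lambda(\widehat{A})$ is immediate: for a parameter ideal $Q$ of $A$ the ideal $Q\widehat{A}$ is a parameter ideal of $\widehat{A}$ and $\ell_A(A/Q^{n+1})=\ell_{\widehat{A}}(\widehat{A}/(Q\widehat{A})^{n+1})$, so $e_1(Q)=e_1(Q\widehat{A})$. For the reverse inclusion I would realize each parameter ideal of $\widehat{A}$ by one coming from $A$: given a parameter ideal $\mathfrak q=(b_1,\dots,b_d)\widehat{A}$, choose $s$ with $\widehat{\m}^{s}\subseteq\mathfrak q$, so $\widehat{A}/\mathfrak q\widehat{\m}$ is a quotient of $\widehat{A}/\widehat{\m}^{s+1}\cong A/\m^{s+1}$ and the map $A\to\widehat{A}/\mathfrak q\widehat{\m}$ is surjective; pick $a_i\in A$ with $a_i\equiv b_i\pmod{\mathfrak q\widehat{\m}}$. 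Then $a_i\in\mathfrak q$ and the $a_i$ generate $\mathfrak q$ modulo $\widehat{\m}\mathfrak q$, so $(a_1,\dots,a_d)\widehat{A}=\mathfrak q$ by Nakayama; by faithful flatness $Q=(a_1,\dots,a_d)$ is $\m$-primary, hence a parameter ideal of $A$, and $e_1(Q)=e_1(Q\widehat{A})=e_1(\mathfrak q)$. This gives $\Lambda(\widehat{A})\subseteq\Lambda(A)$ and thus equality.

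With this in hand the reduction closes quickly. Since $\Lambda_q(\widehat{A})\subseteq\Lambda(\widehat{A})=\Lambda(A)$ and $\Lambda(A)$ is finite, $\Lambda_q(\widehat{A})$ is finite and $\min\Lambda_q(\widehat{A})\ge\min\Lambda(A)$, so $\ell_0:=-\min\Lambda_q(\widehat{A})\le\ell$. Applying Lemma~\ref{key} to $\widehat{A}$ yields $\widehat{\m}^{\ell_0}\H_{\widehat{\m}}^i(\widehat{A})=(0)$ for all $i\ne d$, whence $\widehat{\m}^{\ell}\H_{\widehat{\m}}^i(\widehat{A})=(0)$ because $\ell\ge\ell_0$, and therefore $\m^{\ell}\H_\m^i(A)=(0)$ for every $i\ne d$ by the reduction of the first paragraph.

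Finally, for the ``whence'' part I would argue that each $\H_\m^i(A)$ is Artinian and is annihilated by $\m^\ell$, so it is a module over the Artinian ring $A/\m^\ell$ and carries the finite filtration by the submodules $\m^j\H_\m^i(A)$ whose successive quotients are Artinian $A/\m$-vector spaces, hence finite-dimensional; thus $\H_\m^i(A)$ has finite length and in particular is a finitely generated $A$-module. The only genuinely technical ingredient in this plan is the completion-invariance of $\Lambda$ established in the second paragraph; everything else is assembling the hypotheses of Lemma~\ref{key} and a standard length argument.
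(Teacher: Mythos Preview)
Your proposal is correct and follows the same approach as the paper, which simply says ``Passing to the completion $\widehat{A}$ of $A$ and applying Lemma~\ref{key} we obtain the following.'' You have carefully unpacked the details the paper leaves implicit, in particular the equality $\Lambda(\widehat{A})=\Lambda(A)$ (your lifting argument via Nakayama is the standard way to see that every parameter ideal of $\widehat{A}$ comes from one of $A$) and the finite-length argument for the ``whence'' clause; these are exactly the verifications a reader would need to supply.
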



A system of parameters $a_1, a_2, \cdots, a_d$ of $A$ is said to be {\it standard}, if it forms a $d^+$-sequence, that is, $a_1, a_2, \cdots, a_d$ forms a strong $d$-sequence in any order. We have that $A$ possesses a standard system of parameters if and only if $A$ is a generalized Cohen--Macaulay ring, i.e., all the local cohomology modules $\{\H_\m^i(A) \}_{0 \le i < d}$ are finitely generated (see \cite{T}).

We say that a parameter ideal $Q$ of $A$ is standard, if it is
generated by a standard system of parameters. We have that $Q$ is
standard if and only if the equality $$\ell_A(A/Q) - e_0(Q) =
\sum_{i=0}^{d-1}\binom{d-1}{i}h^i(A):={\mathbb I}(A)$$ holds true,
where $h^i(A) = \ell_A(\H_\m^i(A))$ for each $i \in \mathbb Z$ (cf. \cite[Theorem 2.1]{T}). See \cite{STC, T} for details, where the notion of generalized Cohen--Macaulay module is also given and various basic properties of generalized Cohen--Macaulay rings and modules are explored.

\medskip

Assume that $A$ is a generalized Cohen--Macaulay ring with $d \ge 2$ and let $$s=\sum_{i=1}^{d-1}\binom{d-2}{i-1}h^i(A).$$ If $Q$ is a parameter ideal of $A$, by \cite[Lemma 2.4]{GNi} we have that
$$
e_1(Q) \ge -s,
$$
where the equality holds true, if $Q$ is standard (\cite[Korollar 3.2]{Sch1}).

Therefore, if $A$ is unmixed, $d \ge 2$, and $\Lambda (A)$ is a finite set, by Proposition~\ref{flc} we have that
$$
\m^s \H_\m^i(A)= (0)
$$
for all $i \ne d$. This exponent is, however, never the best possible, as we show in the following.

\begin{cor}\label{q-Bbm}
Let $(A,\m)$ be an unmixed Noetherian local ring of dimension $d\ge 2$. If $\# \Lambda (A) = 1$, then $A$ is a quasi-Buchsbaum ring, that is, $\m \H_\m^i(A) = (0)$ for all $i \ne d$.
\end{cor}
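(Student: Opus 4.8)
The plan is to induct on $d$, after the usual reductions. Since $\#\Lambda(A)=1$ forces $\Lambda(A)$ to be finite, Proposition~\ref{flc} applies and shows that every $\H_\m^i(A)$ with $i\ne d$ is finitely generated; thus $A$ is a generalized Cohen--Macaulay ring, and since $A$ is unmixed with $d\ge 2$ we have $\H_\m^0(A)=(0)$. Passing to $\widehat{A}$ (which preserves unmixedness, the values $e_1(Q)=e_1(Q\widehat{A})$, and the condition $\m\H_\m^i(A)=(0)$) and to an infinite residue field, I may assume $A$ complete with $A/\m$ infinite; if $A$ is Cohen--Macaulay there is nothing to prove. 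Standard parameter ideals exist and realize the minimum value $-s$, where $s=\sum_{i=1}^{d-1}\binom{d-2}{i-1}h^i(A)$; since all values of $e_1$ coincide, this single value is exactly $-s$, so \emph{every} parameter ideal $Q$ satisfies $e_1(Q)=-s$.

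For the base case $d=2$ write $C=\H_\m^1(A)$, a module of finite length with $\ell_A(C)=s$. As in the proof of Lemma~\ref{key}, for a superficial element $a\in\m$ completing to a parameter ideal $Q=(a,b)$ one has $e_1(Q)=-\ell_A((0):_Ca)$. Constancy gives $\ell_A((0):_Ca)=s=\ell_A(C)$, hence $(0):_Ca=C$, i.e. $aC=(0)$, for every superficial $a$. Because $A/\m$ is infinite, the superficial elements are Zariski dense in $\m/\m^2$; therefore $\Ann_A(C)+\m^2=\m$, so $\m C=\m^2C$, whence $\m C=(0)$ by Nakayama. This settles $d=2$.

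Now let $d\ge 3$ and assume the statement in dimension $d-1$. Choose a general $x\in\m$ (avoiding $\Ass(A)$ and the finite set $\mathcal{F}'$ of Proposition~\ref{GNa}~(b)) and an integer $n\gg 0$ with $y:=x^n$ annihilating every finite-length module $\H_\m^i(A)$, $i<d$; then $y$ is a nonzerodivisor. Put $B=A/yA$ and $\widetilde{B}=B/\H_\m^0(B)$, an unmixed ring of dimension $d-1$ as in Lemma~\ref{key}. Since $y\,\H_\m^i(A)=(0)$ for $i<d$ and $\H_\m^i(B)\cong\H_\m^i(\widetilde{B})$ for $i\ge 1$, the long exact sequence attached to $0\to A\xrightarrow{\,y\,}A\to B\to 0$ breaks into short exact sequences
\[
0\longrightarrow \H_\m^i(A)\longrightarrow \H_\m^i(\widetilde{B})\longrightarrow \H_\m^{i+1}(A)\longrightarrow 0,\qquad 1\le i\le d-2 .
\]
If $\widetilde{B}$ is quasi-Buchsbaum, then $\m$ kills each middle term, hence each sub- and quotient-module, giving $\m\H_\m^i(A)=(0)$ for all $1\le i\le d-1$; together with $\H_\m^0(A)=(0)$ this is exactly the assertion for $A$. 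Thus it suffices to apply the induction hypothesis to $\widetilde{B}$, for which I must verify $\#\Lambda(\widetilde{B})=1$.

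The main obstacle is precisely this transfer of the hypothesis to $\widetilde{B}$. For $d\ge 3$ the index $1$ lies in the range $i\le d-2$, so if $y$ is superficial for a parameter ideal $Q=(y,z_2,\dots,z_d)$ of $A$ one gets $e_1(Q)=e_1(QB)=e_1(\bar Q)$, where $\bar Q=(\bar z_2,\dots,\bar z_d)\widetilde{B}$; since $e_1(Q)=-s$ this would force every parameter ideal of $\widetilde{B}$ to attain its minimal value, i.e. $\#\Lambda(\widetilde{B})=1$. The delicate points are (i) that the $d$-sequence construction of Lemma~\ref{key} directly controls only the restricted sets $\Lambda_{q'}(\widetilde{B})\subseteq\Lambda_q(A)$, whereas the base-case density argument needs constancy over \emph{all} parameter ideals (equivalently over $\Lambda_1$), so one must reconcile the full set $\Lambda$ with the $d$-sequence--restricted sets; and (ii) that when $\depth A=1$ one genuinely has $\H_\m^0(B)\ne(0)$ and must control its interaction with the parameter ideals of $\widetilde{B}$. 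Arranging $y$ to be superficial for the relevant $Q$ while keeping the cutting element uniform is where the real work lies; once constancy descends to $\widetilde{B}$, the induction closes.
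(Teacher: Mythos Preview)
Your base case $d=2$ is essentially the paper's Lemma~\ref{d=2}, and your reduction to showing $\m\H_\m^i(A)=0$ via Proposition~\ref{flc} is the same opening move. The divergence comes in the induction step, and here there is a genuine gap---which you yourself flag. The problem of transferring $\#\Lambda(A)=1$ to $\#\Lambda(\widetilde{B})=1$ is real: lifting a parameter ideal $\q$ of $\widetilde{B}$ to $Q=(y,z_2,\dots,z_d)$ in $A$ does not make $y$ superficial for $Q$, and without superficiality the identity $e_1(Q)=e_1(QB)$ is not available. The $d$-sequence machinery of Lemma~\ref{key} only gives $\Lambda_{q'}(\widetilde{B})\subseteq\Lambda_q(A)$, which is not enough. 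As written, the argument does not close.

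The paper sidesteps this difficulty entirely and does \emph{not} induct on $d$. After Proposition~\ref{flc} gives that $A$ is generalized Cohen--Macaulay (so $\Lambda(A)=\{-s\}$ with $s=\sum_{i=1}^{d-1}\binom{d-2}{i-1}h^i(A)$), the paper proves a direct lemma (Lemma~\ref{d=2generalized}): for \emph{any} parameter ideal $Q$ with $e_1(Q)=-s$, one has $Q\H_\m^i(A)=0$ for $i\ne d$. The argument takes superficial generators $a_1,\dots,a_d$ of $Q$, sets $a=a_i$ and $B=A/aA$, and exploits the chain of inequalities
\[
e_1(QB)\ \ge\ -\sum_{i=1}^{d-2}\binom{d-3}{i-1}h^i(B)\ \ge\ -\sum_{i=1}^{d-2}\binom{d-3}{i-1}\bigl[h^i(A)+h^{i+1}(A)\bigr]\ =\ -s\ =\ e_1(Q)\ =\ e_1(QB),
\]
forcing equality throughout; the middle equality then yields $a\H_\m^i(A)=0$ for each $i<d$. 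Since this holds for every superficial generator, $Q\H_\m^i(A)=0$. The corollary follows because every $a\in\m$ with $\dim A/aA=d-1$ lies in some parameter ideal $Q$ (which automatically satisfies $e_1(Q)=-s$), hence $a\H_\m^i(A)=0$; such elements generate $\m$. The point is that the paper never needs constancy of $e_1$ in the quotient ring---only the single equality $e_1(Q)=-s$ in $A$ and the general lower bound for $e_1$ in generalized Cohen--Macaulay rings.
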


To prove Corollary \ref{q-Bbm} we need the lemmas below.

\begin{lem}\label{d=2}
Suppose that $(A,\m)$ is a Noetherian local ring of dimension $d= 2$ and $\operatorname{depth} (A) = 1$. Assume that $\H_\m^1(A)$ is finitely generated. Let $Q$ be  a parameter ideal of $A$. Then the following three conditions are equivalent{\rm :}
\begin{enumerate}
\item[{\rm (a)}] $e_1(Q) = -\ell_A(\H_\m^1(A))${\rm ;}
\item[{\rm (b)}] $Q \H_\m^1(A) = (0)${\rm ;}
\item[{\rm (c)}] $Q$ is standard.
\end{enumerate}
\end{lem}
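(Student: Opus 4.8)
The plan is to reduce everything to two explicit length formulas obtained by cutting down with a superficial nonzerodivisor, and then read off the three equivalences from a single chain of submodules.

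First I would pass to $A[X]_{\m A[X]}$ so that the residue field is infinite; this extension is faithfully flat and preserves $\dim$, $\depth$, the ideal $Q$, the numbers $e_0(Q),e_1(Q)$, standardness, the lengths $\ell_A(\H^i_\m(A))$, and the vanishing $Q\H^1_\m(A)=(0)$, so no generality is lost. Write $M=\H^1_\m(A)$, which has finite length by hypothesis, and note that since $\depth A=1$ we have $\H^0_\m(A)=(0)$, whence $\mathbb I(A)=\sum_{i=0}^{1}\binom{1}{i}\ell_A(\H^i_\m(A))=\ell_A(M)$. Because $\depth A=1>0$, a general element $c\in Q$ is simultaneously superficial for $Q$ and a nonzerodivisor, since it avoids the finitely many primes in $\Ass(A)$, none of which is $\m$. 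Fix such a $c$, write $Q=(c,b)$, and put $S=A/cA$, a one-dimensional ring with $QS=bS$. The long exact local cohomology sequence of $0\to A\overset{c}{\to}A\to S\to 0$, together with $\H^0_\m(A)=(0)$, identifies $\H^0_\m(S)\cong(0:_M c)$.

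Next I would record the two formulas. On the one hand, by \cite[Lemmas 2.2 and 2.4]{GNi} and the one-dimensional case, $e_1(Q)=e_1(QS)+\ell_A(0:_A c)=e_1(QS)=-\ell_A(\H^0_\m(S))=-\ell_A(0:_M c)$; crucially the derivation used only that $c$ is a superficial nonzerodivisor, so the identity holds for every such element of $Q$. On the other hand, set $W=\H^0_\m(S)=(0:_M c)$ and $\overline{S}=S/W$, a one-dimensional Cohen--Macaulay ring on which $b$ acts as a nonzerodivisor. Applying the snake lemma to multiplication by $b$ on $0\to W\to S\to\overline{S}\to 0$ gives $\ell_A(S/bS)=\ell_A(W/bW)+\ell_A(\overline{S}/b\overline{S})$, and $\ell_A(W/bW)=\ell_A(0:_W b)$ since $W$ has finite length. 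As $\ell_A(A/Q)=\ell_A(S/bS)$ and $e_0(Q)=e_0(QS)=\ell_A(\overline{S}/b\overline{S})$, I obtain the second formula $\ell_A(A/Q)-e_0(Q)=\ell_A(0:_W b)$.

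Finally I would extract the equivalences from the chain $0:_W b\subseteq W=(0:_M c)\subseteq M$. By the standardness criterion $\ell_A(A/Q)-e_0(Q)=\mathbb I(A)$ and $\mathbb I(A)=\ell_A(M)$, condition (c) says $\ell_A(0:_W b)=\ell_A(M)$, forcing equality throughout the chain, i.e. $cM=0$ and $bM=0$, that is $QM=(0)$; conversely $QM=(0)$ gives $0:_W b=M$. Thus (b) $\Leftrightarrow$ (c). The implication (b) $\Rightarrow$ (a) is immediate, since then $c\in Q$ kills $M$ and $e_1(Q)=-\ell_A(0:_M c)=-\ell_A(M)$. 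The hard part is (a) $\Rightarrow$ (b): condition (a) a priori only pins down an equality of integers tied to the single element $c$. Here I would invoke that the first formula holds for \emph{every} superficial nonzerodivisor $c'\in Q$, so $e_1(Q)=-\ell_A(M)$ forces $(0:_M c')=M$, i.e. $c'M=0$, for all of them; since the residue field is infinite, the superficial nonzerodivisors fill a nonempty Zariski-open subset of $Q/\m Q$, which cannot lie in any hyperplane and hence generates $Q$, whence $QM=(0)$. This genericity step is the crux of the argument, and it is precisely what makes the reduction to an infinite residue field and the independence of $e_1(Q)$ from the chosen superficial element indispensable.
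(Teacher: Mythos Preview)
Your argument is correct and, for the crucial implication (a) $\Rightarrow$ (b), follows exactly the paper's line: reduce to infinite residue field, use that for any superficial nonzerodivisor $c\in Q$ one has $e_1(Q)=-\ell_A\big((0):_{\H^1_\m(A)}c\big)$, and conclude that each such $c$ kills $\H^1_\m(A)$. The only cosmetic difference is that the paper simply chooses two generators $a,b$ of $Q$ each of which is superficial and applies the formula to $a$ and to $b$ separately, whereas you argue via genericity that superficial nonzerodivisors span $Q/\m Q$; both are fine, the paper's version is slightly quicker.

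Where you genuinely diverge is in (b) $\Leftrightarrow$ (c). The paper dispatches this by citing \cite[Corollary 3.7]{T} (and \cite[Lemma 2.4(2)]{GNi} for (c) $\Rightarrow$ (a)). You instead derive the explicit formula $\ell_A(A/Q)-e_0(Q)=\ell_A\big((0):_W b\big)$ with $W=(0):_{\H^1_\m(A)}c$, and read off the equivalence from the chain $(0):_W b\subseteq W\subseteq \H^1_\m(A)$ together with the standardness criterion $\ell_A(A/Q)-e_0(Q)=\mathbb I(A)$. This is a nice self-contained replacement for the citation and makes the whole lemma essentially elementary once one grants the characterization of standardness; the paper's route is shorter but less transparent.
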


\begin{proof} (c) $\Rightarrow$ (a) See \cite[Lemma 2.4 (2)]{GNi}.

(b) $\Leftrightarrow$ (c)  See \cite[Corollary 3.7]{T}.

(a) $\Rightarrow$ (b) We may assume that the field $A/\m$ is infinite. Let $Q = (a,b)$ be such that each one of $a,b$ is a superficial element of $Q$. Then $$-\ell_A(\H_\m^1(A)) = e_1(Q) = e_1(Q/(a)) = -\ell_A(\H_\m^0(A/(a))$$ by \cite[Lemma 2.1 (1)]{GNi}. Hence $\ell_A(\H_\m^1(A))=\ell_A((0) :_{\H_\m^1(A)} a)$, and so $a\H_\m^1(A) = (0)$. Similarly we get $b\H_\m^1(A) = (0)$, whence $Q\H_\m^1(A) = (0)$.
\end{proof}

\begin{lem}\label{d=2generalized}
Suppose that $(A,\m)$ is a generalized Cohen--Macaulay local ring of dimension $d \ge 2$ and $\operatorname{depth} (A) > 0$. Let $Q$ be a parameter ideal of $A$ such that $e_1(Q)= - \sum_{i = 1}^{d-1}\binom{d - 2}{i - 1}h^i(A)$. Then $Q \H_\m^i(A) = (0)$ for all $i \ne d$.
\end{lem}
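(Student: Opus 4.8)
The plan is to induct on $d$, reducing modulo one superficial nonzerodivisor while tracking the lengths of the local cohomology precisely enough that the \emph{extremal} value of $e_1(Q)$ forces exact annihilation rather than mere nilpotence. Write $M_i = \H^i_\m(A)$ and $s = \sum_{i=1}^{d-1}\binom{d-2}{i-1}h^i(A)$, so the hypothesis reads $e_1(Q) = -s$; replacing $A$ by $A[X]_{\m A[X]}$ I may assume the residue field is infinite, and since $\depth A > 0$ we have $M_0 = (0)$, so it suffices to prove $QM_i = (0)$ for $1 \le i \le d-1$. For the base case $d=2$ one has $s = h^1(A) = \ell_A(\H^1_\m(A))$; if $\depth A = 2$ then $A$ is Cohen--Macaulay and there is nothing to prove, while if $\depth A = 1$ then $\H^1_\m(A)$ is finitely generated and Lemma~\ref{d=2} (the implication (a)~$\Rightarrow$~(b)) gives $Q\H^1_\m(A)=(0)$, which is the whole claim.

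For the inductive step $d \ge 3$, I would choose $a \in Q$ superficial for $Q$ and a nonzerodivisor (possible as $\depth A > 0$ and the residue field is infinite), and set $B = A/aA$, $\bar Q = QB$. The long exact sequence of $0 \to A \xrightarrow{\,a\,} A \to B \to 0$ yields, for every $i$,
\[ 0 \lar M_i/aM_i \lar \H^i_\m(B) \lar (0 :_{M_{i+1}} a) \lar 0, \]
whose outer terms have finite length for $i \le d-2$; hence $B$ is generalized Cohen--Macaulay of dimension $d-1$. Setting $\widetilde B = B/\H^0_\m(B)$, this $\widetilde B$ is generalized Cohen--Macaulay with $\depth \widetilde B > 0$ and $\H^i_\m(\widetilde B)\cong \H^i_\m(B)$ for $i\ge 1$. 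Using $e_1(\bar Q) = e_1(Q) - \ell_A(0:_A a) = e_1(Q)$ (\cite{GNi}) together with the fact that factoring out the finite-length module $\H^0_\m(B)$ leaves $e_1$ unchanged (Proposition~\ref{Vring}(c), applicable as $\dim B = d-1 \ge 2$), I obtain $e_1(\bar Q\widetilde B) = e_1(Q) = -s$.

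The heart of the matter is a two-sided squeeze relating $s$ to $s' := \sum_{i=1}^{d-2}\binom{d-3}{i-1}h^i(\widetilde B)$. The bound $e_1 \ge -s'$ for $\widetilde B$ gives $-s = e_1(\bar Q\widetilde B) \ge -s'$, so $s \le s'$. Conversely the displayed sequence gives $\ell_A(M_i/aM_i) = \ell_A(0:_{M_i}a) \le h^i(A)$, whence $h^i(\widetilde B) = \ell_A(M_i/aM_i) + \ell_A(0:_{M_{i+1}}a) \le h^i(A) + h^{i+1}(A)$ for $1 \le i \le d-2$; substituting and regrouping by Pascal's rule yields $s' \le s$. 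Therefore $s = s'$, and since the coefficients $\binom{d-3}{i-1}$ are strictly positive, every intermediate inequality is an equality. In particular $\ell_A(0:_{M_i}a)=h^i(A)$ for all relevant $i$, that is, $aM_i = (0)$ for $1 \le i \le d-1$.

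Finally I would run the induction and climb back up. The equality $e_1(\bar Q\widetilde B) = -s'$ is exactly the extremal hypothesis for $\widetilde B$, so the inductive hypothesis gives $\bar Q\,\H^i_\m(\widetilde B) = (0)$ for $i \ne d-1$; via $\H^i_\m(\widetilde B)\cong \H^i_\m(B)$ this reads $Q\,\H^i_\m(B) = (0)$ for $1 \le i \le d-2$. Because $aM_i = (0)$, the displayed sequence collapses to $0 \to M_i \to \H^i_\m(B) \to M_{i+1} \to 0$, so $Q\H^i_\m(B)=(0)$ forces $QM_i = (0)$ (submodule) and $QM_{i+1}=(0)$ (quotient); letting $i$ run over $1\le i\le d-2$ covers $M_1,\dots,M_{d-1}$ and finishes the proof. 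The delicate point is precisely this squeeze: the naive reduction only yields $QM_i \subseteq aM_i$, i.e.\ that $Q$ acts nilpotently on the lower cohomology, and it is the rigidity of the equality $s = s'$ — upgrading $M_i/aM_i$ to $M_i$ and $0:_{M_{i+1}}a$ to $M_{i+1}$ — that converts nilpotence into genuine annihilation.
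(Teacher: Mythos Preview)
Your proof is correct and shares its core with the paper's: both reduce modulo a superficial nonzerodivisor $a$ and run the identical two-sided squeeze $s' \le s \le s'$ (the general bound $e_1 \ge -s'$ from \cite{GNi} on one side, the Pascal estimate $h^i(B) \le h^i(A)+h^{i+1}(A)$ on the other) to force $a\H^i_\m(A)=0$ for $1\le i\le d-1$. The only divergence is in how the argument is finished off. The paper does not induct on $d$: it writes $Q=(a_1,\dots,a_d)$ with \emph{every} $a_j$ superficial, runs the squeeze once for each $a_j$, and concludes $Q\H^i_\m(A)=\sum_j a_j\H^i_\m(A)=0$ directly. You instead fix a single superficial $a$, apply the inductive hypothesis to $\widetilde B=B/\H^0_\m(B)$ to obtain $Q\H^i_\m(B)=0$, and then lift this through the exact sequences $0\to M_i\to \H^i_\m(B)\to M_{i+1}\to 0$. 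The paper's route is a little shorter (no induction, no passage to $\widetilde B$); yours has the minor virtue of requiring only one superficial element rather than a full superficial generating set. One small remark: your appeal to Proposition~\ref{Vring}(c) for $e_1(\bar Q\widetilde B)=e_1(\bar Q)$ is really an appeal to the fact in its proof (that modding out a finite-length submodule leaves $e_1$ unchanged when the dimension is at least $2$), not to the statement about Vasconcelos rings itself.
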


\begin{proof}
If $d=2$ the conclusion follows from Lemma~\ref{d=2}. Assume that $d\geq 3$. Let $Q = (a_1, a_2, \cdots, a_d)$, where each $a_i$ is superficial for the ideal $Q$, and let $a = a_i$. Let $B=A/aA$. We have that $e_1(QB)=e_1(Q)$.

Consider the exact sequence of local cohomology modules $$
\cdots \to \H_\m^i(A) \overset{a}{\to} \H_\m^i(A) \to \H_\m^i(B) \to \H_\m^{i+1}(A) \overset{a}{\to}  \H_\m^{i+1}(A) \to \H_\m^{i+1}(B) \to \cdots.
$$ We then have
$$
h^i(B) = \ell_A(\H_\m^i(A)/a\H_\m^i(A)) + \ell_A((0) :_{\H_\m^{i+1}(A)}a)
\le h^i(A) + h^{i+1}(A)
$$
for all $0 \le i \le d-2$. Hence
\begin{eqnarray*}
 e_1(QB)\geq -\sum_{i=1}^{d-2}\binom{d-3}{i-1}h^i(B) &\ge&-\sum_{i=1}^{d-2}\binom{d-3}{i-1}\left[h^i(A) + h^{i+1}(A)\right]\\
 &=& -\sum_{i=1}^{d-1}\binom{d-2}{i-1}h^i(A)\\
&=& e_1(QB).
\end{eqnarray*}
It follows that $h^i(B) = h^i(A) + h^{i+1}(A)$ for every $1 \le i \le d-2$, whence $a\H_\m^i(A) = (0)$ for all $1 \le i \le d - 1$. Thus $Q\H_\m^i(A) = (0)$, if $i \ne d$.
\end{proof}

\begin{proof}[Proof of Corollary $\ref{q-Bbm}$]
By Proposition \ref{flc} $A$ is a generalized Cohen--Macaulay ring. Hence we have $\Lambda (A) = \{-\sum_{i=1}^{d-1}\binom{d-2}{i-1}h^i(A)\}$ by \cite[Korollar 3.2]{Sch1}.
Let $a \in \m$ such that $\operatorname{dim} A/aA = d-1$. It is enough to show that $a\H_\m^i(A) = (0)$ for all $i \ne d$. This follows from Lemma~\ref{d=2generalized}.
\end{proof}

Since every quasi-Buchsbaum ring $A$ is Buchsbaum once $\operatorname{depth} A \ge d-1$ (\cite[Corollary 1.1]{SV}), we readily get the following.

\begin{cor} Suppose that $A$ is an unmixed Noetherian local ring of dimension $d \ge 2$ and $\operatorname{depth} A \ge d-1$. Then $\# \Lambda (A) = 1$ if and only if $A$ is a Buchsbaum ring.
\end{cor}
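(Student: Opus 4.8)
The plan is to establish both implications by assembling results already in hand, since the heavy lifting has been carried out in Corollary~\ref{q-Bbm} and in the classical structure theory of (quasi-)Buchsbaum rings recalled earlier in this section.

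First I would treat the implication that a single value of $e_1$ forces the Buchsbaum property. Assuming $\#\Lambda (A) = 1$, the hypotheses of Corollary~\ref{q-Bbm} are met verbatim ($A$ unmixed, $d \ge 2$, $\#\Lambda (A)=1$), so that corollary yields at once that $A$ is quasi-Buchsbaum, i.e. $\m \H_\m^i(A) = (0)$ for all $i \ne d$. At this point the extra depth hypothesis $\operatorname{depth} A \ge d-1$ enters: by \cite[Corollary 1.1]{SV} a quasi-Buchsbaum ring whose depth is at least $d-1$ is automatically Buchsbaum. Combining these two inputs gives that $A$ is Buchsbaum.

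For the converse I would invoke the standard parameter theory quoted above. If $A$ is Buchsbaum, then in particular $A$ is generalized Cohen--Macaulay and every parameter ideal $Q$ of $A$ is standard. Hence, by \cite[Korollar 3.2]{Sch1}, every such $Q$ satisfies $e_1(Q) = -s$, where $s = \sum_{i=1}^{d-1}\binom{d-2}{i-1}h^i(A)$ and $h^i(A) = \ell_A(\H_\m^i(A))$. Since $s$ depends only on the local cohomology invariants of $A$ and not on the choice of $Q$, the set $\Lambda (A)$ reduces to the single value $-s$, that is, $\#\Lambda (A) = 1$.

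In terms of difficulty there is no genuinely new obstacle: both directions are essentially immediate once Corollary~\ref{q-Bbm} is available, so the substantive work lies in that corollary and, through it, in Lemma~\ref{d=2generalized} and Proposition~\ref{flc}. The only point requiring attention is that the passage from quasi-Buchsbaum to Buchsbaum is precisely where the assumption $\operatorname{depth} A \ge d-1$ is used and cannot be omitted; this explains why that hypothesis is present here but absent from Corollary~\ref{q-Bbm}.
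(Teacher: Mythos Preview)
Your proposal is correct and follows the paper's approach essentially verbatim: the paper deduces the forward direction by combining Corollary~\ref{q-Bbm} with \cite[Corollary 1.1]{SV} (using that $\operatorname{depth} A \ge d-1$ forces $\H_\m^i(A)=(0)$ for $i\ne d-1,d$), and the converse is precisely the observation from \cite[Korollar 3.2]{Sch1} that in a Buchsbaum ring every parameter ideal is standard and hence has $e_1(Q)=-s$.
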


The authors expect that $A$ is a Buchsbaum ring, if $A$ is unmixed and $e_1(Q)$ is independent of the choice of parameter ideals $Q$ of $A$. We will show that this is the case, when $e_1(Q) = -1$ and when $e_1(Q) = -2$.

\begin{prop}\label{dimU}
Let $A$ be a Noetherian local ring of dimension $d \ge 2$ and suppose that for all parameter ideals $Q$ of $A$, $e_1(Q)=-t$ for some $t\geq 0$. Let $U=U_A(0)$. Then $\dim U \le d-2$, and for all parameter ideals $\q$ of $A/U$ we have that $e_1(\q)=-t$ .
\end{prop}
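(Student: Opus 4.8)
My plan is to work throughout with $C = A/U$ and reduce everything to Lemma~\ref{lemma}. If $U = (0)$ both assertions are trivial (then $\dim U \le d-2$ vacuously and $A/U = A$), so I assume $U \neq (0)$. By Lemma~\ref{lemma}(a) we already have $\dim U \le d-1$, so for the first assertion I only need to rule out $\dim U = d-1$. Assuming $\dim U = d-1$, Lemma~\ref{lemma}(b) gives, for every parameter ideal $Q$ of $A$, the identity $e_1(Q) = e_1(QC) - s_0(Q)$ with $s_0(Q) \ge 1$ equal to the multiplicity of $\bigoplus_n U/(Q^{n+1}\cap U)$. Since $QC$ is a parameter ideal of the $d$-dimensional ring $C$, Corollary~\ref{cor}(a) forces $e_1(QC) \le 0$; combined with the hypothesis $e_1(Q) = -t$ this yields the uniform bound $s_0(Q) = e_1(QC)+t \le t$ valid for \emph{every} parameter ideal $Q$.

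The core of the first part is to contradict this bound by making $s_0(Q)$ arbitrarily large. First I would identify $s_0(Q)$ with the Hilbert--Samuel multiplicity $e_0(Q;U)$: by the Artin--Rees lemma one has $Q^{n+1}U \subseteq Q^{n+1}\cap U \subseteq Q^{\,n+1-c}U$ for $n \gg 0$, so $\ell_A(U/(Q^{n+1}\cap U))$ and $\ell_A(U/Q^{n+1}U)$ share the same leading term in degree $\dim U = d-1$, whence $s_0(Q) = e_0(Q;U)$. The associativity formula for multiplicities then gives $e_0(Q;U) = \sum_{\p} \ell_{A_\p}(U_\p)\,e_0(Q;A/\p)$, the sum ranging over the nonempty set of $\p \in \Supp U$ with $\dim A/\p = d-1$. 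Choosing $Q \subseteq \m^q$ (for instance the ideal generated by the $q$-th powers of a system of parameters) gives $e_0(Q;A/\p) \ge e_0(\m^q(A/\p)) = q^{\,d-1}e_0(\m(A/\p)) \ge q^{\,d-1}$ for each such $\p$, hence $s_0(Q) \ge q^{\,d-1}$. Letting $q \to \infty$ contradicts $s_0(Q)\le t$, so $\dim U \le d-2$.

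For the second assertion, once $\dim U \le d-2$ is known, Lemma~\ref{lemma}(c) upgrades the relation to the equality $e_1(Q) = e_1(QC)$ for every parameter ideal $Q$ of $A$, so $e_1(QC) = -t$ for all such $Q$. It then suffices to show that every parameter ideal $\q$ of $C$ has the form $QC$ for some parameter ideal $Q$ of $A$; for then $e_1(\q) = e_1(QC) = e_1(Q) = -t$. Given a system of parameters $\bar b_1, \dots, \bar b_d$ of $C$ (note $\dim C = d$), I would lift it to a system of parameters $b_1, \dots, b_d$ of $A$ with $b_i \equiv \bar b_i$, built inductively: at the $i$-th step I choose $b_i$ in the coset $\tilde b_i + U$ avoiding every minimal prime $\p$ of $(b_1,\dots,b_{i-1})$ with $\dim A/\p = d-i+1$. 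The key observation making this possible is that $\tilde b_i + U \not\subseteq \p$ for each such $\p$: if it were, then $U \subseteq \p$ and $\bar b_i \in \p/U$, producing a prime of $C$ of dimension $d-i+1$ containing $\bar b_1, \dots, \bar b_i$, which is impossible since these form part of a system of parameters. Coset prime avoidance then supplies the required $b_i$, and since $U \subseteq \p$ for every $\p \in \Assh(A)$ the induction starts correctly and keeps dropping the dimension by one, so that $(b_1,\dots,b_d)$ is a system of parameters with $(b_1,\dots,b_d)C = \q$.

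The main obstacle is this last lifting step: the equality $e_1(Q)=e_1(QC)$ only controls those parameter ideals of $C$ extended from $A$, so the whole second assertion rests on the surjectivity of $Q \mapsto QC$ onto the parameter ideals of $C$. The decisive structural input is that $U = U_A(0)$ is contained in every top-dimensional associated prime of $A$, which is exactly what lets the coset prime avoidance go through; I expect the verification that this coset avoidance succeeds (rather than the multiplicity estimate of the first part) to be where the real care is needed.
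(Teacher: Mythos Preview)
Your argument is correct, and for the first assertion it takes a genuinely different route from the paper's.

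For $\dim U\le d-2$, the paper does not identify $s_0(Q)$ with $e_0(Q;U)$ in general. Instead it fixes a single system of parameters $a_1,\dots,a_d$ with $(a_d)\cap U=(0)$ (citing \cite{CC}), sets $Q=(a_1^{\ell},a_2,\dots,a_d)$ with $\ell>t$, and uses Artin--Rees together with $a_dU=0$ to compute that the $U$-contribution to $e_1(Q)$ is $e_0((a_1^{\ell},\dots,a_{d-1});U')$ for $U'=Q^k\cap U$; this is at least $\ell\cdot e_0((a_1,\dots,a_{d-1});U')\ge\ell>t$, giving the contradiction. Your route---Artin--Rees to get $s_0(Q)=e_0(Q;U)$, then the associativity formula and parameter ideals $Q\subseteq\m^q$---is arguably cleaner and avoids the external input of the special system of parameters, at the cost of invoking the associativity formula; the paper's route stays within one explicit family of parameter ideals and needs no such formula.

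For the second assertion the paper simply says ``choosing a parameter ideal $Q$ of $A$ so that $QB=\q$'' and moves on, whereas you spell out the lifting via coset prime avoidance. Your lifting argument is fine; note only that the remark ``$U\subseteq\p$ for every $\p\in\Assh(A)$'' is not needed as a separate structural input, since in your inductive step the containment $U\subseteq\p$ is already forced by the hypothesis $\tilde b_i+U\subseteq\p$. So, contrary to your closing paragraph, the paper treats the multiplicity estimate as the substantive step and the lifting as routine.
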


\begin{proof}
Let $B=A/U$. Assume that $\dim U = d-1$. Choose a system of parameters $a_1, a_2, \cdots, a_d$ of $A$ so that $(a_d) \cap U = (0)$ (cf. \cite{CC}). Let $\ell > t$ be an integer and let $Q = (a_1^\ell, a_2, \cdots, a_d)$. For all $n \ge 0$ we have the exact sequence of $A$-modules
$$0 \to U/(Q^{n+1} \cap U) \to A/Q^{n+1} \to B/Q^{n+1}B \to 0.$$ Let $k \ge 0$ be an integer such that
$$Q^{n} \cap U =
Q^{n-k}(Q^k \cap U)$$
for all $n \ge k$. We put $U' = Q^k \cap U$ and $\q = (a_1^{\ell}, a_2, \cdots, a_{d-1})$. Then $Q^{n-k}U' = \q^{n-k}U'$, because $a_dU=(0)$. Therefore, for all $n \ge k$ we have
$$\ell_A(A/Q^{n+1}) = \ell_A(B/Q^{n+1}B) + \ell_A(U'/\q^{n-k+1}U') + \ell_A(U/U'),$$
which implies
$$
-t = e_1(Q) = e_1(QB) -e_0(\q U').
$$
Consequently, since $e_1(QB) \le 0$, we have $$
\ell \le \ell e_0((a_1, a_2, \cdots, a_{d-1})U') = e_0(\q U') = e_1(QB) + t \le t,
$$
which is impossible. Thus $\dim U \le d-2$.

To see the second assertion, let $\q$ be a parameter ideal of $B$. Then, choosing a parameter ideal $Q$ of $A$ so that $QB = \q$, we get $e_1(\q) = e_1(Q)=-t$, since $\dim U \le d-2$.
\end{proof}

\begin{thm}\label{constant}
Let $(A,\m)$ be a Noetherian local ring of dimension $d \ge 2$. Then the following two conditions are equivalent{\rm :}
\begin{enumerate}
\item[{\rm (a)}] $e_1(Q) = -1$ for every parameter ideal $Q$ of $A${\rm ;}
\item[{\rm (b)}] Let $U = U_{\widehat{A}}(0)$ be the unmixed component of $(0)$ in the $\m$-adic completion $\widehat{A}$ of $A$. Then $\operatorname{dim}_{\widehat{A}}U \le d-2$ and $\widehat{A}/U$ is a Buchsbaum ring such that either
\begin{enumerate}
\item[$(\mathrm{i})$] $\H_{\widehat{\m}}^i(\widehat{A}/U) = (0)$ for all $i \ne 1, d$ and $\ell_{\widehat{A}}(\H_{\widehat{\m}}^1(\widehat{A}/U)) = 1$, or
\item[$(\mathrm{ii})$] $\H_{\widehat{\m}}^i(\widehat{A}/U) = (0)$ for all $i \ne d-1, d$ and $\ell_{\widehat{A}}(\H_{\widehat{\m}}^{d-1}(\widehat{A}/U)) = 1$,
\end{enumerate}
where $\widehat{\m}$ denotes the maximal ideal of $\widehat{A}$.
\end{enumerate}
\end{thm}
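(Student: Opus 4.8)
The plan is to pass at once to the completion: since $e_1(Q)=e_1(Q\widehat{A})$ for every parameter ideal and since $U$ and $\widehat{A}/U$ are formed over $\widehat{A}$, I may assume $A=\widehat{A}$. Then $A$ is a homomorphic image of a Gorenstein ring and, exactly as in the proof of Theorem~\ref{1.2-2}, $B:=A/U$ is an unmixed local ring of dimension $d$. I would dispose of (b) $\Rightarrow$ (a) first, as the short direction. Assuming $\dim U\le d-2$, Lemma~\ref{lemma} gives $e_1(Q)=e_1(QB)$ for every parameter ideal $Q$, so it is enough to compute $e_1$ on $B$. Since $B$ is Buchsbaum every parameter ideal of $B$ is standard, whence $e_1(\q)=-s$ with $s=\sum_{i=1}^{d-1}\binom{d-2}{i-1}h^i(B)$ by \cite[Korollar 3.2]{Sch1}, where $h^i(B)=\ell_A(\H_\m^i(B))$. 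In case~(i) this reduces to $\binom{d-2}{0}h^1(B)=1$ and in case~(ii) to $\binom{d-2}{d-2}h^{d-1}(B)=1$; either way $s=1$, so $e_1(Q)=-1$ for all $Q$.

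For (a) $\Rightarrow$ (b) I would start from Proposition~\ref{dimU} with $t=1$, which yields both $\dim U\le d-2$ and $e_1(\q)=-1$ for every parameter ideal $\q$ of $B$. Thus $B$ is unmixed with $\#\Lambda(B)=1$, so Corollary~\ref{q-Bbm} shows $B$ is quasi-Buchsbaum and Proposition~\ref{flc} shows $B$ is generalized Cohen--Macaulay of positive depth. Evaluating $e_1$ on a standard parameter ideal of $B$ (one exists because $B$ is generalized Cohen--Macaulay) gives $-1=-s$, i.e.\ $\sum_{i=1}^{d-1}\binom{d-2}{i-1}h^i(B)=1$. Because $\binom{d-2}{i-1}=1$ precisely for $i\in\{1,d-1\}$ and is at least $2$ for the remaining indices, the nonnegative integers $h^i(B)$ are forced into one of exactly two patterns: $h^1(B)=1$ with $h^i(B)=0$ for $i\ne 1,d$ (case~(i)), or $h^{d-1}(B)=1$ with $h^i(B)=0$ for $i\ne d-1,d$ (case~(ii)). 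These are exactly the length-one assertions appearing in (b).

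The remaining and, I expect, hardest point is to promote quasi-Buchsbaum to Buchsbaum for $B$, since quasi-Buchsbaumness alone does not guarantee it. The favorable feature here is that in both cases the intermediate local cohomology of $B$ is concentrated in a single degree. In case~(ii) the vanishing of $\H_\m^i(B)$ for $i<d-1$ gives $\depth B\ge d-1$, so \cite[Corollary 1.1]{SV} applies verbatim. The genuine obstacle is case~(i), where $\depth B=1$ while $d$ may be arbitrarily large, so the depth criterion is unavailable directly. Here I plan to argue through the canonical module $K_B$: by Schenzel's duality for generalized Cohen--Macaulay modules \cite{SV} one has $\H_\m^i(K_B)\cong\H_\m^{d-i}(B)^{\vee}$ for $0<i<d$, so in case~(i) the only nonvanishing intermediate cohomology of $K_B$ sits in degree $d-1$ and has length one. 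Thus $K_B$ is quasi-Buchsbaum with $\depth K_B\ge d-1$, hence Buchsbaum by \cite[Corollary 1.1]{SV}; and since for generalized Cohen--Macaulay rings the Buchsbaum property is inherited between the ring and its canonical module \cite{SV}, $B$ itself is Buchsbaum. This completes (b), the length-one values of $h^1(B)$ and $h^{d-1}(B)$ recording the stated conditions $\ell_{\widehat{A}}(\H_{\widehat{\m}}^1(\widehat{A}/U))=1$ and $\ell_{\widehat{A}}(\H_{\widehat{\m}}^{d-1}(\widehat{A}/U))=1$ respectively.
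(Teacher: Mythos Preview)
Your argument tracks the paper's proof closely through Proposition~\ref{dimU}, Corollary~\ref{q-Bbm}, and the computation $\sum_{i=1}^{d-1}\binom{d-2}{i-1}h^i(B)=1$, and the (b)~$\Rightarrow$~(a) direction is identical. The only divergence is in the promotion from quasi-Buchsbaum to Buchsbaum in case~(i). The paper does not treat the two cases separately: it invokes \cite[Corollary~1.1]{SV} in its full form, namely that a quasi-Buchsbaum ring whose local cohomology vanishes for all $i\ne\operatorname{depth}B,\ \dim B$ is already Buchsbaum. In case~(i) one has $\operatorname{depth}B=1$ and $\H_\m^i(B)=(0)$ for $i\ne 1,d$, so the criterion applies verbatim; in case~(ii) it is the $\operatorname{depth}B\ge d-1$ instance you already identified. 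No canonical-module detour is needed.

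Your proposed route through $K_B$ is plausible in spirit but the specific ingredients you cite are not in \cite{SV}: the duality $\H_\m^i(K_B)\cong\H_\m^{d-i}(B)^\vee$ in that indexing, and the transfer of the Buchsbaum property between $B$ and $K_B$, would each require separate references (Schenzel's work on dualizing complexes, or the St\"uckrad--Vogel book, rather than the 1978 paper), and the exact index shift in the cohomological duality for $K_B$ is delicate. Since the direct application of \cite[Corollary~1.1]{SV} already closes the gap, this extra machinery is unnecessary here.
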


\begin{proof}
We may assume that $A$ is complete.

(b) $\Rightarrow$ (a)
Let $Q$ be a parameter ideal of $A$, and let $B = A/U$. Then $e_1(Q) = e_1(QB)$, since $\operatorname{dim}_{A}U \le d-2$. Consequently $e_1(Q) = -1$, because $e_1(QB) = -\sum_{i=1}^{d-1}\binom{d-2}{i-1}h^i(B) = -1$ by \cite[Korollar 3.2]{Sch1} and condition (i) or (ii).

(a) $\Rightarrow$ (b)
By Proposition~\ref{dimU} we may assume that $A$ is unmixed. Then $A$ is a quasi-Buchsbaum ring by Corollary~\ref{q-Bbm}. We have condition (i) or (ii), because $e_1(Q) = -\sum_{i=1}^{d-1}\binom{d-2}{i-1}h^i(A)$. Hence $A$ is a Buchsbaum ring, because $A$ is a quasi-Buchsbaum ring with $\H_\m^i(A) = (0)$ for all $i \ne \operatorname{depth} A, \operatorname{dim} A$ (see \cite[Corollary 1.1]{SV}).
\end{proof}

To treat the case where $e_1(Q) = -2$ we need the following.

\begin{lem}\label{-2}
Suppose that $(A,\m)$ is a generalized Cohen--Macaulay local ring of dimension $d \ge 2$ and $\operatorname{depth} (A) > 0$. Assume that $h^1(A) = 1$ and $h^i(A) = 0$ for all $2 \le i \le d-2$. Let $Q$ be a parameter ideal in $A$ such that $e_1(Q)= - \sum_{i = 1}^{d-1}\binom{d - 2}{i - 1}h^i(A)$. Then $Q$ is standard.
\end{lem}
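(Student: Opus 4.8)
The plan is to deduce that $Q$ is standard from the length--multiplicity criterion recalled above, namely that $Q$ is standard precisely when $\ell_A(A/Q) - e_0(Q) = \mathbb{I}(A)$ (cf. \cite{T}). By Lemma~\ref{d=2generalized} the hypothesis $e_1(Q) = -\sum_{i=1}^{d-1}\binom{d-2}{i-1}h^i(A)$ already yields $Q\H_\m^i(A) = (0)$ for every $i \ne d$; the real task is to promote this annihilation to standardness, and the concentrated shape of the local cohomology---namely $\depth A = 1$, $h^1(A) = 1$, and $h^i(A) = 0$ for $2 \le i \le d-2$---is what makes this possible. I would induct on $d$. For $d = 2$ the hypothesis reads $e_1(Q) = -h^1(A) = -\ell_A(\H_\m^1(A))$, so $Q$ is standard by Lemma~\ref{d=2}.

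For the inductive step, choose a superficial sequence $a_1, \dots, a_d$ generating $Q$, set $B = A/a_1A$ and $\overline{Q} = QB$. Since $a_1 \in Q$ annihilates every $\H_\m^i(A)$ with $i < d$, the sequence $0 \to A \overset{a_1}{\to} A \to B \to 0$ gives, exactly as in the proof of Lemma~\ref{d=2generalized}, the additivity $h^i(B) = h^i(A) + h^{i+1}(A)$ for $0 \le i \le d-2$; in particular $\H_\m^0(B) \cong \H_\m^1(A)$ has length one, so $\depth B = 0$ and the depth is forced to drop. I would then pass to $\widetilde{B} = B/\H_\m^0(B)$, which is generalized Cohen--Macaulay of dimension $d-1$ with positive depth and $\H_\m^i(\widetilde B) \cong \H_\m^i(B)$ for $i \ge 1$. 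A direct computation with the additivity formula shows that $\widetilde B$ again satisfies the hypotheses of the lemma (with $h^1(\widetilde B) = 1$ once $d \ge 4$) and that $e_1(\overline Q\widetilde B) = e_1(\overline Q B) = e_1(Q)$ equals $-\sum_{i=1}^{d-2}\binom{d-3}{i-1}h^i(\widetilde B)$. Hence the inductive hypothesis applies and $\overline Q\widetilde B$ is standard in $\widetilde B$; when $d = 3$, so that $\dim\widetilde B = 2$, I invoke Lemma~\ref{d=2} directly, which requires only $e_1(\overline Q\widetilde B) = -\ell_A(\H_\m^1(\widetilde B))$ rather than $h^1(\widetilde B) = 1$.

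It remains to lift standardness back to $A$. Using $\depth A > 0$ (so $h^0(A) = 0$) together with the additivity of the $h^i$, a Pascal-identity computation gives $\mathbb{I}(A) = \mathbb{I}(B)$, while $\mathbb{I}(\widetilde B) = \mathbb{I}(B) - \ell_A(\H_\m^0(B)) = \mathbb{I}(B) - 1$. Because $a_1$ is a nonzerodivisor one has $A/Q \cong B/\overline Q B$ and $e_0(Q) = e_0(\overline Q)$, and splitting $\ell_A(B/\overline Q B)$ along $0 \to \H_\m^0(B) \to B \to \widetilde B \to 0$ together with the standardness of $\overline Q\widetilde B$ yields
\[
\ell_A(A/Q) - e_0(Q) = \mathbb{I}(A) - \ell_A\bigl(\H_\m^0(B) \cap \overline Q B\bigr).
\]
Thus everything reduces to the single statement $\H_\m^0(B) \cap \overline Q B = (0)$.

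I expect this last point---that the length-one socle $\H_\m^0(B) \cong \H_\m^1(A)$ is not absorbed into the parameter ideal $\overline Q B$---to be the main obstacle, precisely because the reduction irretrievably lowers the depth to zero and the invariant $e_1$ cannot by itself detect this $n = 0$ phenomenon. My plan for it is to exploit that, by the inductive hypothesis, $a_2, \dots, a_d$ is a $d$-sequence on $\widetilde B$, so that $\overline Q$ annihilates the Koszul homology $\mathrm{H}_1(a_2, \dots, a_d; \widetilde B)$: a nonzero element $w = \sum_{i \ge 2} a_i c_i$ of $\H_\m^0(B) \cap \overline Q B$ maps to a syzygy of $a_2, \dots, a_d$ over $\widetilde B$, and the $d$-sequence relations, combined with $a_i\,\H_\m^0(B) = (0)$ coming from $\ell_A(\H_\m^0(B)) = 1$, force $w \in \overline Q\,\H_\m^0(B) = (0)$, a contradiction. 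Once the displayed intersection is shown to vanish, the chain of equalities gives $\ell_A(A/Q) - e_0(Q) = \mathbb{I}(A)$, so $Q$ is standard.
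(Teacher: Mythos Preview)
Your reduction is correct and matches the paper's up to the point where everything hinges on showing $\H_\m^0(B)\cap \overline{Q}B=(0)$; your displayed identity
\[
\ell_A(A/Q)-e_0(Q)=\mathbb{I}(A)-\ell_A\bigl(\H_\m^0(B)\cap\overline{Q}B\bigr)
\]
is exactly the content of the paper's inequality chain, reorganized. The difficulty is entirely in that last intersection, and your proposed argument for it does not go through.

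The problem is this: from $w=\sum_{i\ge 2}a_ic_i\in\H_\m^0(B)$ you obtain a syzygy $\sum a_i\bar c_i=0$ in $\widetilde B$, and you want to conclude $w\in\overline{Q}\,\H_\m^0(B)$. That conclusion would follow \emph{if} $(\bar c_2,\dots,\bar c_d)$ were a Koszul boundary in $\widetilde B$, for then one could lift it to a Koszul relation in $B$ and absorb the discrepancy into $\H_\m^0(B)$. But $H_1(a_2,\dots,a_d;\widetilde B)\ne 0$: since $h^1(\widetilde B)\ge 1$ the ring $\widetilde B$ is not Cohen--Macaulay, and for a system of parameters $H_1=0$ forces regularity of the sequence. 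The fact that $\overline{Q}$ annihilates $H_1$ tells you only that each $a_j\cdot(\bar c_i)_i$ is a boundary, not that $(\bar c_i)_i$ itself is one, and there is no mechanism to pass from ``$a_j$ times the syzygy is trivial'' back to ``$w\in\overline{Q}\,\H_\m^0(B)$''. In effect your argument, if it worked, would never again use the precise value of $e_1(Q)$ after the inductive step, and would therefore show that $Q\H_\m^i(A)=(0)$ for $i<d$ already implies $Q$ is standard under the stated cohomological hypotheses; but this is exactly the delicate point that separates quasi-Buchsbaum from Buchsbaum behaviour.

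The paper closes the gap by a genuinely different device. It argues by contradiction: if $Q$ is not standard then (by your identity, with the roles of the $a_i$ permuted) one gets $\mathrm U(a_i):=(a_i):\m\subseteq Q$ for \emph{every} $i$, hence $\sum_i\mathrm U(a_i)=Q$. Now the $(\mathrm S_2)$-fication $0\to A\to\widetilde A\to\H_\m^1(A)\to 0$ enters: since $\depth_A\widetilde A\ge 2$ and $Q\H_\m^{d-1}(\widetilde A)=(0)$, the ideal $Q$ is standard on $\widetilde A$, so any two of the $a_i$ form an $\widetilde A$-regular sequence; this forces $\mathrm U(a_i)=a_i\widetilde A$ (using $\ell_A(\widetilde A/A)=h^1(A)=1$), whence $Q=Q\widetilde A$ and $Q^{n+1}=Q^{n+1}\widetilde A$ for all $n$. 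Comparing Hilbert functions then gives $e_1(Q)=e_1(Q\widetilde A)=-h^{d-1}(A)$, contradicting the hypothesis $e_1(Q)=-1-h^{d-1}(A)$. Note how the value of $e_1(Q)$ is used a \emph{second} time, at the very end; this is the missing ingredient in your approach.
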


\begin{proof}
We may assume that the field $A/\m$ is infinite. If $d=2$, the conclusion follows from Lemma~\ref{d=2}.
Let $d \ge 3$ and let $Q = (a_1, a_2, \cdots, a_d)$, where each  $a_i$ is superficial for the ideal $Q$. Let $a = a_i$, $\mathrm{U}(a) = (a) : \m$, and put $B = A/aA$, $\widetilde{B} = B/\H_\m^0(B)$. Then
$$e_1(Q) = e_1(QB) = e_1(Q\widetilde{B}).$$
Consider the exact sequence of local cohomology modules $$
\cdots \to \H_\m^i(A) \overset{a}{\to} \H_\m^i(A) \to \H_\m^i(B) \to \H_\m^{i+1}(A) \overset{a}{\to}  \H_\m^{i+1}(A) \to \H_\m^{i+1}(B) \to \cdots.
$$

 Since $h^1(A) = 1$, we have $h^0(B)=1$, whence  $\H_\m^0(B)=\mathrm{U}(a)/(a)$ and $\widetilde{B} = A/\mathrm{U}(a)$.

By Lemma~\ref{d=2generalized} we have that $a\H_\m^{d-1}(A)=(0)$. Therefore, if $d=3$ we get an exact sequence
$$0 \to \H_\m^1(A) \to \H_\m^1(B) \to \H_\m^2(A) \to 0,$$ whence $h^1(\widetilde{B}) = h^1(B) = 1 + h^2(A)$.

If $d\geq 4$ we get $\H_\m^1(B) \cong \H_\m^1(A)$, $\H_\m^i(B) = (0)$ if $2 \le i \le d-3$, and $\H_\m^{d-2}(B) \cong \H_\m^{d-1}(A)$.

Consequently, if $d\geq 3$ we have that $e_1(Q\widetilde{B}) = - \sum_{i = 1}^{d-2}\binom{d - 3}{i - 1}h^i(\widetilde{B})$, and so by induction on $d$ the parameter ideal $Q\widetilde{B}$ is standard.
 Therefore
$$
\ell_{\widetilde{B}}(\widetilde{B}/Q\widetilde{B}) - e_0(Q\widetilde{B}) = {\mathbb I}(\widetilde{B})=
 (d-2) + h^{d-1}(A).
$$

Now assume by contradiction that $Q$ is not a standard parameter ideal in $A$. Then
\begin{eqnarray*}
(d-2) + h^{d-1}(A) &=& \ell_{\widetilde{B}}(\widetilde{B}/Q\widetilde{B}) -  e_0(Q\widetilde{B})\\ &=& \ell_A(A/(\mathrm{U}(a) + Q)) - e_0(A)\\
&=& \left[\ell_A(A/Q) - e_0(A) \right] - \ell_A((Q + \mathrm{U}(a))/Q)\\
&<&{\mathbb I}(A) - \ell_A((Q + \mathrm{U}(a))/Q)\\
&=& \left[(d-1) + h^{d-1}(A)\right] - \ell_A((Q + \mathrm{U}(a))/Q).
\end{eqnarray*}
Consequently, $\ell_A((Q + \mathrm{U}(a))/Q)=0$, whence $\mathrm{U}(a) \subseteq Q$.
Therefore $$\sum_{i = 1}^d\mathrm{U}(a_i) = Q$$ by the symmetry among the elements $a_i$. Let $\widetilde{A}$ denote the $(\mathrm{S}_2)$-fication of $A$ and look at the canonical exact sequence
$$0 \to A \to \widetilde{A} \to \H_\m^1(A) \to 0$$
(\cite[Theorem 1.6]{AG}). Then $\operatorname{dim} \widetilde{A} = d$, $\operatorname{depth} \widetilde{A} \ge d-1$, and  $\H_\m^{d-1}(\widetilde{A}) \cong \H_\m^{d-1}(A)$. Hence $Q$ is a standard parameter ideal for the generalized Cohen--Macaulay $A$-module $\widetilde{A}$ by \cite[Corollary 3.7]{T}, because $Q\H_\m^{d-1}(\widetilde{A}) = (0)$. Therefore, since $\operatorname{depth}_A\widetilde{A} \ge 2$, any two of $a_1, a_2, \cdots, a_d$ form an $\widetilde{A}$-regular sequence, whence $\mathrm{U}(a_i) \subseteq a_i\widetilde{A}$  for all $1 \le i \le d$. Consequently $\mathrm{U}(a_i) = a_i \widetilde{A}$, because $a_iA \subsetneq \mathrm{U}(a_i) \subseteq a_i \widetilde{A}$ and $\ell_A(a_i \widetilde{A}/a_iA) =\ell_A(\widetilde{A}/A) =  1$. Thus $Q = Q\widetilde{A}$, so that we have $Q^{n+1} = Q^{n+1}\widetilde{A}$ for all $n \ge 0$. Since $$\ell_A(A/Q^{n+1}) = \ell_A(\widetilde{A}/Q^{n+1}\widetilde{A}) - 1,$$ we get $$e_1(Q) = e_1(Q\widetilde{A}) = -h^{d-1}(\widetilde{A}) = -h^{d-1}(A)$$ by \cite[Korollar 3.2]{Sch1}, a contradiction. Thus the parameter ideal $Q$ is standard in $A$.
\end{proof}

\begin{thm}\label{constant2}
Let $(A,\m)$ be a Noetherian local ring of dimension $d \ge 2$. Then the following two conditions are equivalent{\rm :}
\begin{enumerate}
\item[{\rm (a)}] $e_1(Q) = -2$ for every parameter ideal $Q$ of $A${\rm ;}
\item[{\rm (b)}] Let $U = U_{\widehat{A}}(0)$ be the unmixed component of $(0)$ in the $\m$-adic completion $\widehat{A}$ of $A$. Then $\operatorname{dim}_{\widehat{A}}U \le d-2$ and $\widehat{A}/U$ is a Buchsbaum ring with $$\sum_{i = 1}^{d-1}\binom{d-2}{i-1}h^i(\widehat{A}/U) = 2.$$
\end{enumerate}
\end{thm}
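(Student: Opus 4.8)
The plan is to follow the template of the proof of Theorem~\ref{constant}: reduce to the complete case, use Proposition~\ref{dimU} to strip off the unmixed component, and then analyze the unmixed situation via the generalized Cohen--Macaulay machinery; the new feature for the value $-2$ is that upgrading quasi-Buchsbaum to Buchsbaum is no longer automatic, and Lemma~\ref{-2} is precisely what repairs this. Throughout I may assume $A$ is complete, since completion preserves $\ell_A(A/Q^{n+1})$ and hence every $e_1(Q)$, and parameter ideals correspond.

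For the direction (b) $\Rightarrow$ (a), set $B = \widehat{A}/U = A/U$. Since $\operatorname{dim}_A U \le d-2$, Lemma~\ref{lemma}(c) gives $e_1(Q) = e_1(QB)$ for every parameter ideal $Q$ of $A$. Because $B$ is Buchsbaum, every parameter ideal of $B$ is standard, so by \cite[Korollar 3.2]{Sch1} we get $e_1(QB) = -\sum_{i=1}^{d-1}\binom{d-2}{i-1}h^i(B) = -2$, using the numerical hypothesis in (b). Hence $e_1(Q) = -2$ for all $Q$.

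For (a) $\Rightarrow$ (b), Proposition~\ref{dimU} (applied with $t=2$) yields $\operatorname{dim}_A U \le d-2$ together with $e_1(\q) = -2$ for every parameter ideal $\q$ of $B = A/U$; since $B$ is unmixed of dimension $d \ge 2$, I may replace $A$ by $B$ and assume $A$ is unmixed. Then $\Lambda(A) = \{-2\}$, so Proposition~\ref{flc} shows $A$ is generalized Cohen--Macaulay and Corollary~\ref{q-Bbm} shows $A$ is quasi-Buchsbaum; note also $\operatorname{depth} A > 0$ because $A$ is unmixed of positive dimension. Choosing a standard parameter ideal $Q_0$ (which exists in a generalized Cohen--Macaulay ring) and applying \cite[Korollar 3.2]{Sch1}, we find $-2 = e_1(Q_0) = -s$, where $s := \sum_{i=1}^{d-1}\binom{d-2}{i-1}h^i(A)$, so $s = 2$.

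It remains to promote quasi-Buchsbaum to Buchsbaum, and the case analysis of $\sum_{i=1}^{d-1}\binom{d-2}{i-1}h^i(A) = 2$ is the crux. Since $\binom{d-2}{i-1} = 1$ exactly for $i = 1, d-1$ while $\binom{d-2}{i-1} \ge d-2$ for the interior degrees $2 \le i \le d-2$, the only nonzero configurations are: $h^1 = 2$; or $h^{d-1} = 2$; or $d=4$ with $h^2 = 1$; or $h^1(A) = h^{d-1}(A) = 1$ with $d \ge 3$ (all other $h^i = 0$). In the first three, the local cohomology is concentrated in the two degrees $\operatorname{depth} A$ and $d$, so \cite[Corollary 1.1]{SV} immediately gives that $A$ is Buchsbaum. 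The genuinely harder configuration is $h^1(A) = h^{d-1}(A) = 1$, where the cohomology lives in the three degrees $\{1, d-1, d\}$ and \cite[Corollary 1.1]{SV} no longer applies; this is exactly the hypothesis of Lemma~\ref{-2}, which shows that any parameter ideal $Q$ with $e_1(Q) = -s$ is standard. As every parameter ideal satisfies $e_1(Q) = -2 = -s$, all of them are standard and $A$ is Buchsbaum. I expect this last configuration to be the main obstacle: here quasi-Buchsbaumness is not enough, and one must invoke the full $\mathrm{S}_2$-ification argument packaged in Lemma~\ref{-2} rather than the soft two-degree criterion used in the remaining cases.
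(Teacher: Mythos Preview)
Your proof is correct and follows essentially the same approach as the paper's: reduce to the complete unmixed case via Proposition~\ref{dimU}, apply Corollary~\ref{q-Bbm} and \cite[Korollar~3.2]{Sch1} to obtain quasi-Buchsbaumness with $s=2$, dispose of the cases where cohomology is concentrated in two degrees via \cite[Corollary~1.1]{SV}, and handle the remaining configuration $h^1=h^{d-1}=1$ with Lemma~\ref{-2}. Your write-up makes the case analysis more explicit than the paper's terse ``By \cite[Corollary 1.1]{SV} we may assume that $d \ge 3$, $h^1(A) = h^{d-1}(A) = 1$,'' but the logic is identical.
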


\begin{proof}

(b) $\Rightarrow$ (a) The assertion follows from \cite[Korollar 3.2]{Sch1}.

(a) $\Rightarrow$ (b)
By Proposition~\ref{dimU}, we may assume that $A$ is an unmixed complete local ring. Hence $A$ is a quasi-Buchsbaum ring by Corollary \ref{q-Bbm} and $\sum_{i = 1}^{d-1}\binom{d-2}{i-1}h^i(A) = 2$ by \cite[Korollar 3.2]{Sch1}. By \cite[Corollary 1.1]{SV} we may assume that $d \ge 3$, $h^1(A) = h^{d-1}(A) = 1$, and $h^i(A) = 0$ if $2 \le i \le d-2$. Then by Lemma \ref{-2} every parameter ideal $Q$ of $A$ is standard, so that $A$ is a Buchsbaum ring.
\end{proof}

\end{document}